\documentclass[11pt]{article}

\setlength{\textwidth}{16cm}
\setlength{\textheight}{23cm}
\setlength{\topmargin}{-1cm}
\setlength{\oddsidemargin}{-1mm}
\setlength{\evensidemargin}{-1mm}
\raggedbottom

\usepackage{amsmath}
\usepackage{amsmath}
\usepackage{amsthm}
\usepackage{amsfonts}
\usepackage{amssymb}
\usepackage{color}
\usepackage{graphicx}
\usepackage[colorlinks=true,linkcolor=blue,urlcolor=blue,citecolor=red, hyperfigures=false]{hyperref}
\usepackage{dsfont}
\usepackage{enumerate}

\let\inf\relax \DeclareMathOperator*\inf{\vphantom{p}inf}
\let\max\relax \DeclareMathOperator*\max{\vphantom{p}max}
\let\min\relax \DeclareMathOperator*\min{\vphantom{p}min}

%       les entiers naturels
\newcommand{\N}{\mathbb N}

%       les entiers relatifs

%       les nombres rationnels

%       les nombres reels
\newcommand{\R}{\mathbb R}

%       les nombres complexes

\newcommand{\D}{\mathbb D}
\newcommand{\E}{\mathbb E}

\def\P{\mathbb P}
\def\F{\mathbb F}

\newcommand{\be}{\begin{equation}}
\newcommand{\ee}{\end{equation}}
\def\1{{\bf 1}}

\bibliographystyle{plain}

\def\Dt0{{\bf D}(t_0)}

\def\pa{\partial}

\def\tr{\hbox{tr}}

\newcommand{\ind}{{\mathds{1}}}
%A AJOUTER

\definecolor{ProcessBlue}{cmyk}{1,0,0,0.40}
\def\blu{\color{ProcessBlue}}
\definecolor{PineGreen}{cmyk}{0.8,0,1,0.6}

\def\di{\mbox{div}}

\newcommand{\ba}{\[\begin{array}{rl}}
\newcommand{\ea}{\end{array}\]}
\newcommand{\bea}{\begin{eqnarray}}
\newcommand{\eea}{\end{eqnarray}}
\newcommand{\beaa}{\begin{eqnarray*}}
\newcommand{\eeaa}{\end{eqnarray*}}
%%%%%%%%%%%%%%%%%%%%%%%%%%%%%%%%%%
\numberwithin{equation}{section}
\numberwithin{figure}{section}
\theoremstyle{plain}
\newtheorem{Theorem}{Theorem}[section]
\newtheorem{Definition}[Theorem]{Definition}
\newtheorem{Proposition}[Theorem]{Proposition}
\newtheorem{Lemma}[Theorem]{Lemma}

\newtheorem{Remark}[Theorem]{Remark}
\newtheorem{Remarks}[Theorem]{Remarks}

%\renewcommand {\theequation}{\arabic{section}.\arabic{equation}}
%\def\thesection{\arabic{section}}
%\newcommand{\dfnn}{\stackrel{\triangle}{=}}
%\font\ninerm=cmr10 at 9pt \font\nineit=cmti10 at 9pt
%\font\ninesl=cmsl10 at 9pt \font\srm=cmr10 at 10pt \font\sit=cmti10

%\newtheorem{theorem}{Theorem}
%\newtheorem{Proposition}{Proposition}[section]
%\newtheorem{lemma}{Lemma}[section]
%\newtheorem{Corollary}{Corollary}[section]
%\newtheorem{remark}{Remark}[section]
%\newtheorem{note}{Note}[section]
%\newtheorem{remarks}{Remarks}[section]
%\newtheorem{Definition}{Definition}[section]

\newcommand{\BR}{{\cal B}}

\newcommand{\FR}{{\cal F}}

\newcommand{\LR}{{\cal L}}
\newcommand{\MR}{{\cal M}}

\newcommand{\PR}{{\cal P}}

\newcommand{\TR}{{\cal T}}

\newcommand{\noi}{\noindent}

\def\pa{\vskip4truept \noindent}

\begin{document}

\title{A two player zerosum game where only one player observes a Brownian motion.}

\author{Fabien Gensbittel$^{(1)}$ and Catherine Rainer$^{(2)}$\\
$\;$\\
\small {(1)} TSE (GREMAQ, UT1),\\
\small {(2)} Universit\'{e} de Bretagne Occidentale, 6, avenue Victor-le-Gorgeu, B.P. 809, 29285 Brest cedex, France\\
 \small e-mail : 
Fabien.Gensbittel@ut-capitole.fr, Catherine.Rainer@univ-brest.fr}

  \maketitle

\vspace{3mm}

%%%%%%%%%%%%%%%%%%%%%%%%%%%%%%%%%%%%%%%%%%%%%%%%%%%%%%%%%%%%%%%%%%%%

\begin{abstract}
We study a two-player zero-sum game in continuous time, where the payoff -a running cost- depends on a Brownian motion. This Brownian motion is observed in real time by one of the players. The other one observes only the actions of  his/her opponent. We prove that the game has a value and characterize it as the largest convex subsolution of a Hamilton-Jacobi equation on the space of probability measures.
\end{abstract}

\vspace{3mm}

\noindent{\bf Key-words} : Zero-sum continuous-time game, incomplete information, Hamilton-Jacobi equations, Brownian motion, measure-valued process
\vspace{3mm}

\noindent{\bf A.M.S. classification :} 91A05, 91A23, 49N70
\vspace{3mm}

\section{Introduction.}

In this paper, we consider a two-player zero-sum game in continuous time. Given some finite time horizon $T>0$, some initial time $t\in[0,T]$ and a probability measure $m$ on $\R^d$, the payoff consists in a running cost of type
\[ \E_m\left[\int_t^Tf(s,B_s,u_s,v_s)ds\right],\]
where $(u_s)$ (resp. $(v_s)$) is the control played by the first (resp. second) player, and $(B_s)$ is a standard $\R^d$-valued Brownian motion, which, under $\P_m$, starts with the initial law $m$ at time $t$. Player 1 wants to minimize this payoff, while player 2 wants to maximize it. This is a game with asymmetric information: the first player observes in real time the Brownian motion and the controls of the second player, while the second player cannot see neither the Brownian motion nor the payoff of the game, but only the controls of  the first player.
The game is approximated by a sequence of discrete time games with vanishing time increments, i.e.  where the players play more and more frequently. \\
A first result is that, under Isaacs' assumption, the sequence of the values for the discrete games converges to some function $(t,m)\mapsto V(t,m)$ which is equal to the value of a control problem 
\begin{equation}
\label{measproc}
 \inf_{M\in\MR(t,m)}\E[\int_t^TH(s,M_s)ds],
 \end{equation}
where $H(t,m)=\inf_u\sup_v\int_{\R^d}f(t,x,u,v)dm(x)$, and $\MR(t,m)$ is the set of measure valued processes $(M_s)_{s\in[t,T]}$ which satisfy
\begin{itemize}
\item $M_t=m$,
\item for all $t\leq t_1\leq t_2\leq T$ and all continuous, bounded function $\phi$, 
\[ \E\left[\int_{\R^d}\phi dM_{t_2}|\FR^M_{t_1}\right]=\int_{\R^d}\phi dp^{t_1,M_{t_1}}_{t_2},\]
with $p^{t,m}_r$ the law at time $r$ of the Brownian motion $(B_s)$ starting with law $m$ at time $t$ and $\FR^M$ the filtration generated by $M$.
\end{itemize}
Our second and main result is that this value function $V$ can be characterized as the largest bounded and continuous function of $(t,m)$ which is convex in $m$ and subsolution of the following equation :
\begin{equation}
\label{hjiintro}
\left\{
\begin{array}{l}
\partial_tU(t,m)+\frac{1}{2}\int_{\R^d}\di[D_mU](t,m,x)m(dx)+H(t,m)=0,\; (t,m)\in[0,T]\times\PR_2,\\
\\
U(T,m)=0,\; m\in\PR_2.
\end{array}
\right.
\end{equation}
Here the derivative with respect to the measure $m$, $D_mU$, is defined in Cardaliaguet-Delarue-Lasry-Lions \cite{CDLL}. Concerning the notion of subsolution, in this case, a naive extension of the classical notion of viscosity subsolution as it can be found in Crandall-Ishii-Lions \cite{CIL} is sufficient. \\

Dynamic games with asymmetric information were introduced in the framework of repeated games by Aumann and Maschler in the 1960th (see \cite{AumannMaschler}) and much later -in 2007- in continuous time by Cardaliaguet.
 In his seminal paper \cite{c1}, the asymmetrically observed parameters belong to some finite sets $I$ and $J$ and are fixed before the game starts. It is shown that the game has a value which is solution in some dual sense of a Hamilton-Jacobi-Isaacs' equation (see  Cardaliaguet-Rainer \cite{cr1} for a generalisation to stochastic differential games and  Oliu-Barton \cite{OliuBarton}
 for  correlated information).\\
In Cardaliaguet-Rainer \cite{cr2}, in the case of only one non-informed player and without dynamics, an alternative formulation of type \eqref{measproc}, in terms of an optimization problem over the belief process of the uninformed player, is given (see Gr\"un \cite{Gruen} for stochastic differential games and  \cite{GR} by the authors for its extension to lack of information on both sides). This alternative formulation permits firstly to derive some optimal strategy for the informed player. Further, in cases where the dual approach of \cite{c1} isn't possible (typically when the distribution of the asymmetrically observed parameters has a continuous support), it provides an new angle of attack for the PDE-characterization. \\ 
However, since the reinterpretation in terms of a control problem with respect to  measure-valued processes is possible only for the upper value of the game, it doesn't permit to prove the existence of a value. 
This is one of the reasons why we focus here on the interpretation of the continuous game as a limit of a sequence of discrete time games. 
On the other hand, it permits us to contribute to the growing interest for the interactions between discrete and continuous time games: Rather simultaneously Cardaliaguet-Laraki-Sorin \cite{CLS} and Neyman \cite{Neyman} firstly investigate this area, showing that continuous time games may be interpreted as limits of discrete time games where the players play more and more faster (see also a very recent  paper of Sorin \cite{Sorin}). This approach is then used in Cardaliaguet-Rainer-Rosenberg-Vieille \cite{CRRV} and Gensbittel \cite{Gensbittel} for the asymmetric observation of a Markov process. 
 These last two references are concerned with the asymmetrical observation in real time  of a continuous time, random process. In both, it is a Markov process with finite state space.  Our paper provides a first step to the observation of a Markov process with continuous support.\\
 But several problems remain open: Since, in the present work, the associated PDE has a classical solution, there is no need of a comparison theorem for viscosity solutions in the measure space,  for which a suitable definition is still to find. In the general case of the observation of an arbitrary diffusion process, one cannot avoid this difficulty. The second open question is how to tackle directly the continuous game and the existence of its value.\\
 
The paper is organized as follows: In Section \ref{notations}, we introduce the notations relative to the Wasserstein space, the Brownian motion and Gaussian kernels and introduce the continuous time game and the approximating discrete time games. In section \ref{alternative}, the alternative formulation \eqref{measproc} is established. Finally, in section \ref{edp}, we introduce the PDE \eqref{hjiintro} and state and prove the characterization of the value of the game.

\section{Model: notations and reminders.}
\label{notations}

\subsection{Wasserstein space and Wasserstein distance.}

\pa 
For fixed $d$, let $\PR$ be the set of all probability measures on $\R^d$. 
On $\PR$,  we introduce, for all $p\geq 1$ the normalized $p$th moment
\[ |m|_p=\left(\int_{\R^d}|x|^pdm(x)\right)^{\frac{1}{p}}.\]
The associated subspaces are :
\[ \PR_p=\{ m \in\PR,|m|_p<\infty\}, p\geq 1\]
% (Most of time we will work on $\PR_2$, ...-even if $\PR_{1+\epsilon}$ with $\epsilon>0$ would also be o.k. .) 
%Still, 
For all $p\geq 1$, we define on $\PR_p$ the $p$-Wasserstein distance:
\[ d_p(m,m')=\inf_{\pi\in\Gamma(m,m')}\left(\int_{\R^d\times\R^d}|x-y|^p\pi(dx,dy)\right)^{\frac{1}{p}}, m,m'\in\PR_p\]
where $\Gamma(m,m')$ is the set of probability measures $\pi$ on $\R^d \times \R^d$ with first (resp. second) marginal $m$ (resp $m'$).
Recall that for $p=1$, there is a dual formulation:

\[ d_1(m,m')=\sup\left\{ \int_{\R^d}\varphi d(m-m'), \varphi \; 1\mbox{-Lipschitz}\right\}.\]
\pa
For the above result as well as for the statements of the following Lemma \ref{dro} and their proofs, we refer to Villani  \cite{Villani}.
\pa
In this paper, we will place us mainly on the space $\PR_2$, sometimes endowed with the corresponding $d_2$-metric, but more frequently seen as a subset of $\PR_1$, endowed with $d_1$.
\pa 
Let us state some useful standard results.
\begin{Lemma}
\label{dro} \
\begin{enumerate}
\item
For all $p\geq 1$, it holds that, for some well known constant $C_p$,
\[ d_p^p(m,m')\leq C_p(|m|_p^p+|m'|_p^p).\]
\item
Let $m,m'\in\PR_2$. 
We have
\[ d_1(m,m')\leq \int_{\R^d}|x||m-m'|(dx).\]
In particular,
if $m$ (resp. $m'$) has a density $\rho$ (resp. $\rho'$), then
\[ d_1(m,m')\leq \int_{\R^d}|x||\rho-\rho'|(x)dx.\]
\item The balls $\BR_2(R)=\{ m\in\PR,|m|_2\leq R\}$ are $d_1$ compact.
\item For all $\phi$ continuous with $|\phi(x)|\leq \alpha (1+|x|^p)$ for some constant $\alpha$, the map $\PR_p \ni m \mapsto \int_{\R^d} \phi(x) dm(x)$ is $d_p$ continuous. If $\phi$ is non-negative, it is $d_q$ lower semicontinuous (l.s.c.) for all $1\leq q \leq p$.
%\item For all non-negative l.s.c. map $\phi$ on $\R^d$, the map $\PR \ni m \mapsto \int_{\R^d} \phi(x) dm(x)$ is weak$^*$-l.s.c. {\blu necessaire ?}
\item A sequence of measures $(\mu_n)$ converges in $\PR_p$ if and only if it converges weakly and admits uniformly integrable $p$-moments.
\end{enumerate}
\end{Lemma}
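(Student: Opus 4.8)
These five assertions are classical facts about Wasserstein spaces, all contained in Villani \cite{Villani}; the route I would follow organises them around a single equivalence. Recall that on $\PR_p$ one has $d_p(\mu_n,\mu)\to 0$ precisely when $\mu_n$ converges weakly to $\mu$ \emph{and} $\int_{\R^d}|x|^p\,d\mu_n\to\int_{\R^d}|x|^p\,d\mu$; combined with the de la Vall\'ee--Poussin / Vitali principle that, under weak convergence, convergence of the $p$-th moments amounts to uniform integrability of the $\mu_n$-integrands $x\mapsto|x|^p$, this is exactly item (5). So I would first settle (5) by these two standard ingredients, and then read items (3) and (4) off it.

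Items (1) and (2) are short and independent. For (1), I would bound $d_p^p(m,m')$ by the cost of the product coupling $\pi=m\otimes m'\in\Gamma(m,m')$ and apply $|x-y|^p\le 2^{p-1}(|x|^p+|y|^p)$, which gives the inequality with $C_p=2^{p-1}$. For (2), I would use Kantorovich--Rubinstein duality: for a $1$-Lipschitz $\varphi$, since $(m-m')(\R^d)=0$ one may replace $\varphi$ by $\varphi-\varphi(0)$, so that $|\varphi(x)|\le|x|$ and $\int_{\R^d}\varphi\,d(m-m')\le\int_{\R^d}|x|\,d|m-m'|(x)$; taking the supremum over $\varphi$ gives the claim, and the statement for densities is the special case $|m-m'|(dx)=|\rho-\rho'|(x)\,dx$.

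For (3): a uniform bound $|m|_2\le R$ yields tightness of $\BR_2(R)$ by Markov's inequality, hence relative compactness in the weak topology by Prokhorov's theorem; moreover, along any weakly convergent sequence $(m_n)$ in $\BR_2(R)$ the first moments are uniformly integrable, being dominated by the uniformly bounded second moments ($\int_{|x|>K}|x|\,dm_n\le R^2/K$), so by (5) the convergence already holds in $d_1$, and the limit lies in $\BR_2(R)$ by lower semicontinuity of $m\mapsto|m|_2$ under weak convergence. As $(\PR_1,d_1)$ is a metric space, $\BR_2(R)$ is therefore $d_1$-sequentially compact, hence $d_1$-compact. For (4): if $m_n\to m$ in $d_p$, then $m_n\to m$ weakly and $\int|x|^p\,dm_n\to\int|x|^p\,dm$, so for a continuous $\phi$ with $|\phi(x)|\le\alpha(1+|x|^p)$ the tails $\int_{|x|>R}|\phi|\,dm_n$ are small uniformly in $n$ (again by uniform integrability of $|x|^p$), while the truncated part $\int\phi\chi_R\,dm_n$, with $\chi_R$ a continuous cutoff of $\{|x|\le R\}$, converges by weak convergence; a standard $\varepsilon/3$ argument then gives $d_p$-continuity. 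When $\phi\ge 0$ is merely continuous, hence l.s.c., the Portmanteau theorem (equivalently Fatou for weakly convergent measures) gives $\liminf_n\int\phi\,dm_n\ge\int\phi\,dm$ whenever $m_n\to m$ weakly, in particular whenever $m_n\to m$ in $d_q$ with $1\le q\le p$.

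I do not anticipate a real obstacle: each item is routine once the standard theory is in place. The only point needing a little care is the compactness in (3): tightness together with Prokhorov gives only weak relative compactness, so one genuinely uses the moment-convergence characterisation of (5) to upgrade weak convergence to $d_1$-convergence inside the ball, and separately the lower semicontinuity of the second moment to see that $\BR_2(R)$ is $d_1$-closed.
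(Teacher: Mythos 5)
Your proof is correct. The paper itself does not prove this lemma: it explicitly defers both the statements and their proofs to Villani \cite{Villani}, and your arguments are exactly the standard ones from that reference — the product coupling with $|x-y|^p\le 2^{p-1}(|x|^p+|y|^p)$ for (1), Kantorovich--Rubinstein duality after normalizing $\varphi(0)=0$ for (2), the characterization of $d_p$-convergence as weak convergence plus uniformly integrable $p$-moments for (5), and its use together with Prokhorov's theorem and weak lower semicontinuity of the second moment to deduce (3) and (4). The one point you rightly flag — that tightness alone only gives weak relative compactness in (3), so the upgrade to $d_1$-convergence genuinely requires the uniform integrability of first moments inside the ball — is handled correctly by the bound $\int_{|x|>K}|x|\,dm_n\le R^2/K$.
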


\subsection{Law of the Brownian motion, Gaussian kernel}

\pa We fix a dimension $d\in\N^*$ and a finite time horizon $T$. For any $\delta>0$, we denote by $\rho_\delta$ the Gaussian kernel
\[ \rho_\delta(x)=\frac 1{(2\pi\delta)^{\frac{d}{2}}}e^{\frac{-|x|^2}{2\delta}},\; x\in\R^d,\]
and, for any $(t,m)\in[0,T]\times\PR$, and $s\in[t,T]$, by $p^{t,m}_s$ the law of the $d$-dimensional Brownian motion at time $s$, starting at time $t$ with law $m$ : 
\begin{equation}
\label{dens}
p^{t,m}_t=m \mbox{ and, for }s\in(t,T], p^{t,m}_s(dx):=\rho^{t,m}_s(x)dx=\rho_{s-t}*m(x)dx.
\end{equation} 
It is well known that $(\rho_s^{t,m})_{s\in(t,T]}$ satisfies
the heat equation
\begin{equation}
\label{heat}
 \partial_s \rho^{t,m}_s-\frac{1}{2}\Delta \rho^{t,m}_s=0, s\in(t,T].
 \end{equation}
Recall also the Markov property: for all $t\leq t_1\leq t_2\leq T$,
\begin{equation}
\label{Markov}
 p^{t,m}_{t_2}=p^{t_1,p^{t,m}_{t_1}}_{t_2}.
 \end{equation}
 
 \pa In the sequel, we shall repeatedly use the following estimations :
 
 \begin{Lemma} 
\label{dp}
\begin{enumerate}
\item For all $0\leq t\leq s\leq T$ and $m,m'\in\PR_1$, it holds that
 \begin{equation}
 \label{compd}
 d_1(p^{t,m}_s,p^{t,m'}_s)\leq d_1(m,m')\leq d_1(p^{t,m}_s,p^{t,m'}_s)+2\sqrt{d(s-t)}.
 \end{equation}
\item More generally, for all $t\in[0,T]$ and all $k\geq 1$, the map $(s,m)\mapsto p^{t,m}_s$ from $([t,T]\times\PR_k, \lambda|_{[t,T]}\otimes d_k)$ to $(\PR_k,d_k)$ is continuous: The following estimation holds
\begin{equation}
\label{pcont}
d_k(p^{t,m}_s,p^{t,m'}_{s'})^k\leq C_k\left(d_k(m,m')^k+|s-s'|^{k/2}\right),
\end{equation}
where the constant $C_k$ depends only on $k$ and the dimension $d$. 
\end{enumerate}
\end{Lemma}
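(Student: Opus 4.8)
The key observation is that the Brownian kernel acts additively: if $B$ starts from law $m$ at time $t$, then $B_s$ has the same law as $X + G$ where $X \sim m$ and $G \sim \mathcal{N}(0, (s-t)I_d)$ is independent of $X$. This is exactly the content of \eqref{dens}: $p^{t,m}_s = \rho_{s-t} * m$. The plan is to exploit this convolution structure to transport any coupling of $m,m'$ to a coupling of $p^{t,m}_s, p^{t,m'}_s$ by adding the \emph{same} Gaussian noise to both coordinates.

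\begin{proof}
For part 1, fix $0\le t\le s\le T$ and $m,m'\in\PR_1$. Let $\pi\in\Gamma(m,m')$ be any coupling of $m$ and $m'$, and let $G\sim\mathcal N(0,(s-t)I_d)$ be independent of $(X,Y)\sim\pi$. Then $(X+G,Y+G)$ is a coupling of $p^{t,m}_s$ and $p^{t,m'}_s$, and since $|(X+G)-(Y+G)|=|X-Y|$ we get $d_1(p^{t,m}_s,p^{t,m'}_s)\le \E|X-Y|$; taking the infimum over $\pi$ gives the left inequality $d_1(p^{t,m}_s,p^{t,m'}_s)\le d_1(m,m')$. For the right inequality, start instead from an optimal coupling $\tilde\pi\in\Gamma(p^{t,m}_s,p^{t,m'}_s)$; writing $m = p^{t,m}_s * \rho_{-(s-t)}$ is not legitimate (the Gaussian is not invertible by deconvolution as a probability measure), so instead use the triangle inequality: $d_1(m,m')\le d_1(m,p^{t,m}_s)+d_1(p^{t,m}_s,p^{t,m'}_s)+d_1(p^{t,m'}_s,m')$. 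It remains to bound $d_1(m,p^{t,m}_s)$. Coupling $X\sim m$ with $X+G$, $G\sim\mathcal N(0,(s-t)I_d)$ independent, gives $d_1(m,p^{t,m}_s)\le \E|G|\le (\E|G|^2)^{1/2} = \sqrt{d(s-t)}$, and symmetrically for $m'$. Combining yields the right inequality.

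For part 2, the same coupling idea works in $d_k$. Given $m,m'\in\PR_k$ and $s,s'\in[t,T]$ with (say) $s\le s'$, take an optimal coupling $\pi\in\Gamma(m,m')$, and let $G\sim\mathcal N(0,(s-t)I_d)$, $G'\sim\mathcal N(0,(s'-s)I_d)$ be independent of each other and of $(X,Y)\sim\pi$. Then $(X+G, Y+G+G')$ couples $p^{t,m}_s$ with $p^{t,m'}_{s'}$, so by the elementary inequality $|a+b|^k\le 2^{k-1}(|a|^k+|b|^k)$,
\[
d_k(p^{t,m}_s,p^{t,m'}_{s'})^k\le \E|X-Y+(-G')|^k\le 2^{k-1}\Big(\E|X-Y|^k + \E|G'|^k\Big) = 2^{k-1}\Big(d_k(m,m')^k + c_{k,d}|s'-s|^{k/2}\Big),
\]
using that $\E|G'|^k = c_{k,d}|s'-s|^{k/2}$ by scaling of the Gaussian. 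Absorbing constants into $C_k$ (depending only on $k$ and $d$) gives \eqref{pcont}; joint continuity of $(s,m)\mapsto p^{t,m}_s$ in $d_k$ is an immediate consequence. (One may also invoke \eqref{Markov} to reduce the case $s<s'$ to adding fresh noise to $p^{t,m}_s$, which is the same computation.)
\end{proof}

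The argument is essentially routine; the only point that requires a moment's care is the lower bound in \eqref{compd}, where one must resist the temptation to ``invert'' the heat kernel and instead route through the triangle inequality together with the trivial estimate $d_k(m, \rho_\delta * m) \le (\E|G|^k)^{1/k}$ for $G\sim\mathcal N(0,\delta I_d)$. Everything else follows from the additive (convolution) structure of Gaussian kernels and Jensen's or the power-mean inequality. I do not anticipate a genuine obstacle here; this lemma is a collection of standard Wasserstein-space estimates used as infrastructure for the later sections.
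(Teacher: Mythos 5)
Your proof is correct and follows essentially the same route as the paper: couple $m$ and $m'$ arbitrarily (or optimally), add Gaussian noise built from a common Brownian motion to produce a coupling of $p^{t,m}_s$ and $p^{t,m'}_{s'}$, estimate the $k$-th moment of the difference, and obtain the lower bound in \eqref{compd} via the triangle inequality together with $d_1(m,p^{t,m}_s)\le\sqrt{d(s-t)}$. The only cosmetic differences are that the paper phrases the two independent Gaussians $G,G'$ as increments of a single Brownian motion and uses the constant $2^k$ where you use $2^{k-1}$; nothing substantive changes.
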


\begin{proof}
On an arbitrary  probability space $(\bar\Omega,\bar\FR,\bar\P)$,
let  $(X,X')$ be a couple of random variables with marginals $m$ and $m'$ respectively, and  $(B_s)_{s\geq 0}$ a standard $d$-dimensional Brownian motion independent of $(X,X')$. Then the marginals of the couple $(X+B_s,X'+B_{s'})$ are $p^{t,m}_s$
 (resp. $p^{t,m'}_{s'}$) and we can write
\begin{equation}
\label{demdk}
d_k(p^{t,m}_s,p^{t,m'}_{s'})^k\leq \bar\E[|(X+B_s)-(X'+B_{s'})|^k].
\end{equation}
1. For $k=1$, it follows that
\[ d_1(p^{t,m}_s,p^{t,m'}_{s'})\leq \bar\E[|X-X'|]+\sqrt{d|s-s'|},\]
and, since  the couple $(X,X')$ has been chosen arbitrarily,
\begin{equation}
\label{demd1}
d_1(p^{t,m}_s,p^{t,m'}_{s'})\leq d_1(m,m')+\sqrt{d|s-s'|}.
\end{equation}
The relation \eqref{compd} follows using the estimation \eqref{demd1} for $s'=s$ and $s'=t$ and the triangular relation
\[ d_1(m,m')\leq d_1(m,p^{t,m}_s)+d_1(p^{t,m}_s,p^{t,m'}_s)+d_1(p^{t,m'}_s,m').\]
2. Let us come back to the general case. From \eqref{demdk}, we get
\begin{align*}
d_k(p^{t,m}_s,p^{t,m'}_{s'})^k
\leq & 2^k\left( \bar\E[|X-X'|^k]+\bar\E[|B_s-B_{s'}|^k]\right)\\
\leq &C_k\left(\bar\E[|X-X'|^k]+|s-s'|^{k/2}\right).
\end{align*} 
Again, since the couple $(X,X')$ has been chosen arbitrarily, the result follows.
\end{proof}

%\begin{Lemma} 
%\label{dp}
%Fix $0\leq t< s\leq T$ and $m,m'\in\PR_1$. We have the two relations 
%\begin{equation}
%\label{d1p}
%d_1(p^{t,m}_s,m)\leq \sqrt{d}\sqrt{s-t},
%\end{equation}
%and
%\begin{equation}
%\label{compd}
%d_1(p^{t,m}_s,p^{t,m'}_s)\leq d_1(m,m')\leq d_1(p^{t,m}_s,p^{t,m'}_s) +2\sqrt{d}\sqrt{s-t}.
%\end{equation}
%\end{Lemma}

%\begin{proof} On an arbitrary  probability space $(\bar\Omega,\bar\FR,\bar\P)$,
%let $(X,Z)$ be a couple of independent random variables, where $X$ is of law $m$ and $Z$ Gaussian of law $\NR(0,(s-t)I_d)$. Then $X+Z$ is of law $p^{t,m}_s$ and we can write
%\[ d_1(p^{t,m}_s,m)\leq \bar\E[|(X+Z)-X|]=\bar\E[|Z|] .\]
%This gives the first assertion.\\
%Let $\pi$ a measure on $\R^{d\times d}$ with marginals $m$ and $m'$. Consider now  a couple of random variables $(X,X')$ of law $\pi$ and $Z$ of law $\NR(0,(s-t)I_d)$ independent of $(X,X')$. Then $X+Z$ (resp. $X'+Z$) is of law $p^{t,m}_s$ (resp. $p^{t,m'}_s$). It follows that
%\[ d_1(p^{t,m}_s,p^{t,m'}_s)\leq \bar\E[|X+Z-(X'+Z)|]=\bar\E[|X-X'|].\]
%The first relation in (\ref{compd}) follows. The second is a combination of (\ref{d1p}) and the triangle inequality.
%\end{proof}

\subsection{The Brownian motion and the parameters of the game}

\pa Given the finite time horizon $T$, we fix an initial time $t\in[0,T]$. On the set $\Omega_t:=C([t,T],\R^d)$ we denote as usual by $B_s(\omega)=\omega(s), \omega\in\Omega_t, s\in[t,T]$ the canonical process. The set $\Omega_t$ is endowed with $\F_t$, the $\sigma$-algebra generated by $(B_s)_{s\in[t,T]}$. 
Finally, for a fixed probability measure $m\in\PR$, $\P_{t,m}$ denotes the probability on $\Omega_t$ such that, under $\P_{t,m}$, $(B_s)_{s\in[t,T]}$ is a Brownian motion such that $B_t$ is of law $m$. 

\pa 
Let $U$ and $V$ be two compact metric spaces. 
\pa 
Let $f: [0,T]\times\R^d\times U\times V\rightarrow\R$ be a map, which we suppose to be  bounded, continuous in all its variables, Lipschitz in $(t,x)\in [0,T]\times\R^d$ with constant $Lip(f)$, uniformly in $(u,v)$, i.e.:
\[ \forall (u,v)\in U\times V, \, \forall t,t' \in [0,T],\, \forall x,x' \in \R^d, \; |f(t,x,u,v) - f(t',x',u,v')| \leq Lip(f) (|x-x'|+|t-t'|).\] 
Set  $C=\max\{ Lip(f),\|f\|_\infty\}$.
\pa 
As already explained in the introduction, the idea of the game is that, for any fixed initial condition $(t,m)\in[0,T]\times\PR_2$ two players aim to optimize the cost function
$\E_{t,m}\left[\int_t^Tf(s,B_s,u_s,v_s)ds\right]$
where player 1 plays $(u_s)_{s\in[t,T]}$ with values in $U$ and tries to minimize the cost function, while player 2 plays $(v_s)_{s\in[t,T]}$ with values in $V$ and tries to maximize it. Player 1 observes in real time the Brownian motion $(B_s)$ and the controls of player 2, while player 2 only observes the controls of player 1. 
\pa
We do not go more into details,  because we won't analyse this continuous time game but rather approximate it by a sequence of discrete time games. Indeed, as in former works (\cite{cr3},\cite{CRRV},\cite{GR}), while the analysis of the upper value function of the continuous game would be close to what is presented here, there is actually no way to handle its  lower value function. In particular we don't know if the continuous time game has a value. By working on  repeated games, we get around this difficulty thanks to the minmax theorem of Von Neumann: it guaranties that a value exists for each finite-step game. The function we call the ``value of the game'' will be the limit of them.

\pa 
{\bf Assumption:} Throughout the paper, we suppose that the following Isaacs assumption holds: 
\pa
For all $(t,m)\in[0,T]\times\PR_2$,
\be \label{isaacs} 
\inf_{u\in U}\sup_{v\in V} \int_{\R^d}f(t,x,u,v)dm(x)
=\sup_{v\in V}\inf_{u\in U} \int_{\R^d}f(t,x,u,v)dm(x),
\ee
and we denote by $H(t,m)$ the common value.

\begin{Remark}
{\rm Under Isaacs assumption, $H(t,m)$ is the value of the infinitesimal game.
It is also possible to work without Isaacs assumption. In this case $H(t,m)$ has to be replaced by
\[\bar H(t,m)=\inf_{\sigma \in \Delta(U)}\sup_{\tau \in \Delta(V)} \int_U \int_V (\int_{\R^d}f(t,x,u,v)dm(x))d\sigma(u) d\tau(v),\; (t,m)\in[0,T]\times\PR_2,\]
with $\Delta(U)$ (resp. $\Delta(V)$) the set of probability measures on $U$ (resp. $V$). Remark that, since, by assumption, $f$ is continuous in all its variables and $U,V$ are compacts, the infimum and supremum commute in the above definition.
}
\end{Remark}

\begin{Lemma}
\label{HLipsch}
The function $H$ is continuous, bounded and Lipschitz in $t$ and $m\in\PR_2$ (with respect to the $d_1$-distance), with constant $C$. 
%\\
%Moreover, there exists $\epsilon$-optimal (universally) measurable (with respect to $m$) selection of mixed strategies for both players in this infinitesimal game.  
\end{Lemma}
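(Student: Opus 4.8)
The plan is to establish the four properties — boundedness, Lipschitz continuity in $t$, Lipschitz continuity in $m$ with respect to $d_1$, and (joint) continuity — by working directly from the definition $H(t,m)=\inf_{u}\sup_{v}\int_{\R^d}f(t,x,u,v)\,dm(x)$ and exploiting the fact that taking infima and suprema is a $1$-Lipschitz (non-expansive) operation for the sup-norm. Boundedness is immediate: for every $u,v$ we have $|\int f(t,x,u,v)dm(x)|\le\|f\|_\infty\le C$, and this bound is preserved under $\sup_v$ and $\inf_u$, so $\|H\|_\infty\le C$.

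For the Lipschitz estimates, the key elementary fact is that if $g_1,g_2$ are two bounded real functions on a set $A$ with $\sup_{a\in A}|g_1(a)-g_2(a)|\le\eta$, then $|\sup_A g_1-\sup_A g_2|\le\eta$ and $|\inf_A g_1-\inf_A g_2|\le\eta$; applying this twice (once for the inner $\sup_v$, once for the outer $\inf_u$) reduces everything to controlling $\sup_{u,v}|\int f(t,x,u,v)dm(x)-\int f(t',x,u,v)dm'(x)|$. First I would fix $m=m'$ and vary $t$: by the Lipschitz assumption on $f$ in $t$, $|\int f(t,x,u,v)dm(x)-\int f(t',x,u,v)dm(x)|\le \mathrm{Lip}(f)|t-t'|\le C|t-t'|$ uniformly in $(u,v)$, hence $|H(t,m)-H(t',m)|\le C|t-t'|$. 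Next I would fix $t=t'$ and vary $m$: since $x\mapsto f(t,x,u,v)$ is Lipschitz with constant $\mathrm{Lip}(f)$ uniformly in $(u,v)$, the function $x\mapsto f(t,x,u,v)/\mathrm{Lip}(f)$ is $1$-Lipschitz, so by the Kantorovich–Rubinstein dual formulation of $d_1$ recalled in Section~\ref{notations}, $|\int f(t,x,u,v)dm(x)-\int f(t,x,u,v)dm'(x)|\le \mathrm{Lip}(f)\,d_1(m,m')\le C\,d_1(m,m')$, again uniformly in $(u,v)$; passing through $\sup_v$ and $\inf_u$ gives $|H(t,m)-H(t,m')|\le C\,d_1(m,m')$.

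Combining the two one-variable estimates via the triangle inequality yields
\[
|H(t,m)-H(t',m')|\le C\big(|t-t'|+d_1(m,m')\big),
\]
which simultaneously gives joint Lipschitz continuity (hence continuity) on $[0,T]\times\PR_2$ equipped with $d_1$, and the stated Lipschitz constant $C$. I do not expect any genuine obstacle here: the only point requiring a little care is that the $d_1$-estimate is legitimate on $\PR_2\subset\PR_1$ — but $\PR_2\subset\PR_1$ and the dual formula for $d_1$ applies to any pair of measures in $\PR_1$, so the integrals $\int f\,dm$ are well defined (indeed $f$ is bounded) and the Kantorovich duality bound is valid. One should also note that the suprema and infima defining $H$ are finite (by boundedness of $f$) so that the non-expansiveness lemma applies without integrability issues; this is the entirety of the argument.
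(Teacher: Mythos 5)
Your proof is correct and follows essentially the same route as the paper: boundedness from $\|f\|_\infty\le C$, Lipschitz continuity in $t$ from the Lipschitz assumption on $f$, and Lipschitz continuity in $m$ from the Kantorovich--Rubinstein dual formulation of $d_1$, all transferred to $H$ via the non-expansiveness of $\inf$ and $\sup$. The only difference is that you spell out the normalization $f/\mathrm{Lip}(f)$ and the non-expansiveness lemma explicitly, which the paper leaves implicit.
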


\begin{proof} The Lipschitz regularity of $H$  with respect to $t$ follows classically from the Lipschitz regularity in $t$ of $f$.
To prove that $H$ is Lipschitz in $m$, let's write the dual characterization of the Wasserstein distance:
For all $m,m'\in\PR_2$, for all $(u,v)\in U\times V$, it holds that
\[ |\int f(t,x,u,v)dm(x)-\int f(t,x,u,v)dm'(dx)|\leq C\sup_{\varphi \; 1-\mbox{Lipschitz}}\left( \int \varphi dm-\int \varphi dm'\right)=Cd_1(m,m').\]
It follows that the same inequality holds for $H$.
\end{proof}
%%%%%%%%%%%%%%%%%%%%%%%%%%%%%%%%%%%%%%%%%%%%%%%%%%%%%%%%%%%%%%%%%%%%%%%%%%%%%%%%

\subsection{The discrete time game}

\pa Fix $t\in [0,T]$. To each partition $\pi=\{t=t_1<t_2<...<t_N=T\}$ we can associate, as a discretization along $\pi$ of the continuous time game, the following stochastic game $\Gamma_\pi(t,m)$:

\begin{itemize}
\item The variable $B_{t_q}$ is observed by player $1$ before stage $q$ for $q=1,...,N-1$.
\item At each stage, both players choose simultaneously a pair of controls $(u_q,v_q)\in U \times V$.
\item Chosen actions are observed after each stage. 
\item Stage payoff of player $1$ equals $f(t_q, B_{t_q},u_q,v_q)$ (realized stage payoffs are not observed). 
\item The total expected payoff of player $1$ is
\[ \mathbb{E}_{t,m}\left[ \sum_{q =1}^{N-1} (t_{q+1}-t_q) f(t_q,B_{t_q},u_q,v_q) \right].\]
\end{itemize}
The description of the game is common knowledge and we consider the game played in behaviour strategies: at round $q$, player $1$ and player $2$ select simultaneously and independently an action $u_q\in U$ for player $1$ and $v_q\in V$ for player $2$, using some lotteries depending on their past observations. Note that even if the realized stage payoffs are not observed, Player 1 can deduce the value of $f(t_q, B_{t_q},u_q,v_q)$ from his/her observations after stage $q$, which is not possible for Player 2 as he/she does not observe the trajectory of the Brownian motion.
\pa
Formally, a behaviour strategy $\sigma$ for player $1$ is a sequence $(\sigma_{q})_{q=1,...,N-1}$ of transition probabilities:
\[ \sigma_{q} :  ( \R^d \times U \times V )^{q-1}\times  \R^d \rightarrow \Delta(U), \]
where $\sigma_q(B_{t_1},u_1,v_1,...,B_{t_{q-1}},u_{q-1},v_{q-1},B_{t_q})$ denotes the lottery used to select the action $u_q$ played at round $q$ by player $1$, when  past actions played during the game are $(u_1,v_1,...,u_{q-1},v_{q-1})$ and the sequence of observations of player $1$ is $(B_{t_1},...,B_{t_q})$. Let $\Sigma(\pi)$ denote the set of behaviour strategies for player $1$.
Similarly, a behaviour strategy $\tau$ for player $2$  is a sequence $(\tau_{q})_{q=1,...,N-1}$ of transition probabilities depending on  his/her past observations
\[ \tau_{q} : (U\times V )^{q-1} \rightarrow \Delta(V). \]
Let $\TR(\pi)$ denote the set of behaviour strategies for player $2$.
\pa
Let $\P_{t,m,\pi,\sigma,\tau} \in \Delta(C([t,T],\R^d) \times  (U \times V)^{N-1})$ denote the probability on the set of trajectories of $(B_s)$ and actions induced by the strategies $\sigma,\tau$. 
The payoff function in $\Gamma_{\pi}(t,m)$ is defined by
\[ \gamma_{\pi}(t,m,\sigma,\tau):= \mathbb{E}_{t,m,\pi,\sigma,\tau}\left[ \sum_{q=1}^{N-1}  (t_{q+1}-t_q) f(t_q,B_{t_q},u_q,v_q) \right] .\]
It is well known that the game has a value
\begin{align*}
V_{\pi} (t,m) :=& \underset{\tau \in \TR(\pi)}{\sup}\; \underset{\sigma \in  \Sigma(\pi)}{\inf} \; \gamma_{\pi}(t,m,\sigma,\tau)\\
=&\underset{\sigma \in  \Sigma(\pi)}{\inf} \; \underset{\tau \in \TR(\pi)}{\sup}\; \gamma_{\pi}(t,m,\sigma,\tau).  
\end{align*}

%\pa
%The existence of universally measurable $\varepsilon$ optimal mixed strategies follow directly from Von Neumann's theorem (requires that $U,V$ are Polish spaces). If we add regularity assumptions on $f$ with respect to $u,v$ (lsc wrt $u$ and usc wrt $v$, with $U,V$ compact), then we have Borel-measurable selections.

\section{An alternative formulation of the value function}
\label{alternative}

We use the notation $m(\phi):=\int \phi dm$.
\pa
On a sufficiently large probability space $(\Omega,\FR,\P)$ we introduce the following set of measure-valued processes:\\
For any fixed $(t,m)\in[0,T]\times\PR_2$, we define $\MR(t,m)$ as the set  of c\`adl\`ag processes $M=(M_s)_{s\in[t,T]}$ with values in the complete separable metric space  $(\PR_1,d_1)$ which satisfy:
\begin{itemize}
\item[$(i)$] For all $\phi \in C_b(\R^d)$, $\E[ M_t(\phi)]=m(\phi)$, 
\item[$(ii)$] for all $t\leq t_1\leq t_2\leq T$, for all $\phi \in C_b(\R^d)$
\[\E\left[ M_{t_2}(\phi)|\FR^M_{t_1}\right]=p_{t_2}^{t_1,M_{t_1}}(\phi),\]
\end{itemize}
where $(\FR^M_s)_{t\leq s\leq T}$ denotes the filtration generated by $M$, completed and made right-continuous, and $C_b(\R^d)$ the set of continuous, bounded functions from $\R^d$ to $\R$.

\begin{Remarks}
\label{remjen}{\rm $\qquad$
\begin{enumerate}
\item These two properties imply:
\be \label{propM3} \forall \phi \in  C_b(\R^d), \forall t' \in [t,T], \;\E[ M_{t'}(\phi)]=p^{t,m}_{t'}(\phi).  \ee
Indeed, we have
\begin{align*}
\E[ M_{t'}(\phi)]&=\E[\E[ M_{t'}(\phi)| \FR^M_{t}]=\E[ p_{t'}^{t,M_t}(\phi)] \\
&= \E[\int_{\R^d} M_t(\phi(x+\cdot)) \rho_{t'-t}(x)dx] = \int_{\R^d}  \E[M_t(\phi(x+\cdot))] \rho_{t'-t}(x)dx \\
&=\int_{\R^d} m(\phi(x+\cdot))\rho_{t'-t}(x)dx =p_{t'}^{t,m}(\phi).
\end{align*}
\item The equalities $(i),(ii)$ as well as \eqref{propM3} extend to any measurable functions with at most quadratic growth using monotone convergence.
\item In particular, \eqref{propM3} implies that the process $M$ takes values in $\PR_2$ almost surely. 
Indeed, for $t'\in [t,T]$ and $\psi(x)=|x|^2$: 
\[ \E[ M_{t'}(\psi)]=p_{t'}^{t,m}(\psi) < \infty,\]
and thus $ M_{t'}(\psi) < \infty$ almost surely.
\item The c\`adl\`ag property (for $d_1$) of the process $M$ and point 4 of Lemma \ref{dro} imply  that  $s\in[t,T]\mapsto M_s(\phi):=\int_{\R^d}\phi(x)dM_s(x)$ is a c\`adl\`ag process for all continuous functions $\phi$ with at most linear growth.
\item Later in the paper, we need following Jensen's-type inequality: \\
for $0\leq t_1\leq t_2\leq t_3\leq T$,
\begin{equation}
\label{jensen}
 d_1(p^{t_2,M_{t_2}}_{t_3},M_{t_1})\leq \E\left[d_1(M_{t_3},M_{t_1})|\FR^M_{t_2}\right].
 \end{equation}
This relation can easily derived from the  dual formulation of the $d_1$-distance, $(ii)$ and Remark 2.:
for all 1-Lipschitz map $\varphi$, 
\[
\int_{\R^d}\varphi d(p^{t_2,M_{t_2}}_{t_3}-M_{t_1})= \E\left[\int_{\R^d}\varphi d(M_{t_3}-M_{t_1})|\FR^M_{t_2}\right]
\leq\E\left[d_1(M_{t_3},M_{t_1})|\FR^M_{t_2}\right].
\]
The result follows by taking the supremum over all 1-Lipschitz maps $\varphi$.

\end{enumerate}
}\end{Remarks}

\pa We set 
\begin{equation}\label{infmeasure}
 V(t,m)=\inf_{(M_s)\in\MR(t,m)}\E\left[\int_t^TH(s,M_s)ds\right]
\end{equation}

\begin{Remark}{\rm  Let
$\D([t,T],\PR_1)$ be the set of c\`adl\`ag trajectories with values in the complete separable metric space $(\PR_1,d_1)$ and denote $(m_s)_{s\in[t,T]}$ the canonical process on $\D([t,T],\PR_1)$. Then an equivalent formulation for $V$ holds:
\begin{equation}\label{infmeas}
 V(t,m)=\inf_{\mu\in\tilde\MR(t,m)}\E_\mu\left[\int_t^TH(s,m_s)ds\right].
\end{equation}
 where $\tilde\MR(t,m)$ is the family of probability measures on $\D([t,T],\PR_1)$ under which the canonical process $(m_s)_{s\in[t,T]}$ satisfies the items $(i)$ and $(ii)$ of the definition of $\MR(t,m)$.}
 \end{Remark}

\pa Let us state the first standard property of $V$.

\begin{Proposition} The function $V$ is convex in $m$.
%and the minimum exists in \eqref{infmeasure}.
\end{Proposition}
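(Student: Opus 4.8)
The plan is to show that for fixed $t$, the map $m \mapsto V(t,m)$ satisfies $V(t, \lambda m^0 + (1-\lambda) m^1) \le \lambda V(t,m^0) + (1-\lambda) V(t,m^1)$ for all $m^0, m^1 \in \PR_2$ and $\lambda \in [0,1]$. The natural approach is to take near-optimal measure-valued processes $M^0 \in \MR(t,m^0)$ and $M^1 \in \MR(t,m^1)$ for the right-hand side and to glue them together, using an auxiliary randomization, into a single process $M \in \MR(t, \lambda m^0 + (1-\lambda)m^1)$ whose cost is the corresponding convex combination of costs. First I would fix $\e > 0$ and choose $M^i \in \MR(t,m^i)$ with $\E[\int_t^T H(s,M^i_s)\,ds] \le V(t,m^i) + \e$ for $i=0,1$; by enlarging the probability space I may assume $M^0, M^1$ are defined on the same space and are independent, and moreover carry an independent Bernoulli random variable $\Theta$ with $\P(\Theta = 0) = \lambda$, $\P(\Theta=1) = 1-\lambda$, independent of both processes.

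Next I would define the candidate process by $M_s := M^0_s$ on $\{\Theta = 0\}$ and $M_s := M^1_s$ on $\{\Theta = 1\}$, i.e. $M_s = \ind_{\{\Theta=0\}} M^0_s + \ind_{\{\Theta=1\}} M^1_s$ as a $\PR_1$-valued random variable. This is càdlàg for $d_1$ since each $M^i$ is. I then need to verify properties $(i)$ and $(ii)$ in the definition of $\MR(t, \lambda m^0 + (1-\lambda)m^1)$. Property $(i)$ is immediate: for $\phi \in C_b(\R^d)$, $\E[M_t(\phi)] = \lambda \E[M^0_t(\phi)] + (1-\lambda)\E[M^1_t(\phi)] = \lambda m^0(\phi) + (1-\lambda)m^1(\phi)$. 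For property $(ii)$ the key observation is that $\Theta$ is $\FR^M_{t_1}$-measurable, because $\Theta$ can be recovered from the process $M$ itself (for instance, on a set of full measure $M^0$ and $M^1$ differ — or, more carefully, one adds $\Theta$ to the first coordinate or notes that $\FR^M$ can be taken to include $\Theta$ since the two regimes are a.s. distinguishable; alternatively one works with the enlarged filtration $\FR^M_{t_1} \vee \sigma(\Theta)$, under which the martingale-type property $(ii)$ for each $M^i$ still holds by independence of $\Theta$). Conditioning on $\FR^M_{t_1}$ and splitting on the value of $\Theta$, one gets
\[
\E[M_{t_2}(\phi)\mid \FR^M_{t_1}] = \ind_{\{\Theta=0\}}\E[M^0_{t_2}(\phi)\mid \FR^{M^0}_{t_1}] + \ind_{\{\Theta=1\}}\E[M^1_{t_2}(\phi)\mid \FR^{M^1}_{t_1}] = \ind_{\{\Theta=0\}} p^{t_1,M^0_{t_1}}_{t_2}(\phi) + \ind_{\{\Theta=1\}} p^{t_1,M^1_{t_1}}_{t_2}(\phi) = p^{t_1, M_{t_1}}_{t_2}(\phi),
\]
using $(ii)$ for each $M^i$ and the definition of $M_{t_1}$.

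Finally, with $M \in \MR(t,\lambda m^0 + (1-\lambda)m^1)$ established, I compute the cost: by Fubini and conditioning on $\Theta$,
\[
\E\Big[\int_t^T H(s,M_s)\,ds\Big] = \lambda\,\E\Big[\int_t^T H(s,M^0_s)\,ds\Big] + (1-\lambda)\,\E\Big[\int_t^T H(s,M^1_s)\,ds\Big] \le \lambda V(t,m^0) + (1-\lambda) V(t,m^1) + \e,
\]
and since $V(t,\lambda m^0+(1-\lambda)m^1) \le \E[\int_t^T H(s,M_s)\,ds]$, letting $\e \to 0$ gives the claim. The main obstacle — really the only delicate point — is the measurability issue in property $(ii)$: making sure that $\Theta$ is adapted to (a right-continuous completion of) the filtration generated by $M$, or equivalently that gluing two admissible processes with an independent coin produces an admissible process rather than one whose natural filtration is too large or too small. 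This is handled cleanly by recording $\Theta$ as part of the data (working on $\D([t,T],\PR_1)$ one may instead use the formulation \eqref{infmeas} and define $\mu$ as the mixture $\lambda \mu^0 + (1-\lambda)\mu^1$ of the laws of $M^0, M^1$, for which $(i)$ and $(ii)$ are checked directly at the level of the canonical process and no extra randomization is needed).
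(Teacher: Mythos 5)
Your proposal is correct and follows essentially the same route as the paper: take $\e$-optimal processes for the two endpoints, make them independent of each other and of a Bernoulli variable of parameter $\lambda$ on an enlarged space, glue them with that coin, verify $(i)$--$(ii)$, and compute the cost. The one point to tighten is your primary justification of property $(ii)$: the claim that $\Theta$ can be recovered from $M$ (so that $\Theta\in\FR^M_{t_1}$) is false in general --- e.g.\ if both $M^0$ and $M^1$ are the deterministic non-revealing process $s\mapsto p^{t,m^i}_s$ with $m^0=m^1$, the glued process carries no information about $\Theta$ --- and it is also unnecessary. The correct resolution is the alternative you mention in passing, and it is exactly what the paper does: establish the identity after conditioning on the larger $\sigma$-field $\sigma(\FR^{M^0}_{t_1},\FR^{M^1}_{t_1},\Theta)$, observe that the resulting right-hand side $p^{t_1,M_{t_1}}_{t_2}(\phi)$ is measurable with respect to $\sigma(M_s,\,s\in[t,t_1])$, and project down by the tower property; one then extends from $\sigma(M_s,\,s\in[t,t_1])$ to its right-continuous augmentation $\FR^M_{t_1}$ via backward martingale convergence, a last step worth stating since the definition of $\MR(t,m)$ uses the completed right-continuous filtration. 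Your closing remark that the mixture of laws $\lambda\mu^0+(1-\lambda)\mu^1$ in the formulation \eqref{infmeas} sidesteps the randomization entirely is a clean equivalent packaging of the same argument.
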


\begin{proof} 
Fix $t\in[0,T]$ and let $m_1,m_2\in\PR_2$ and $\lambda\in (0,1)$ Set
$m^\lambda=\lambda m_1+(1-\lambda) m_2$. For $\epsilon>0$ let $M^1$ (resp. $M^2$) be $\epsilon$-optimal for $V(t,m_1)$ (resp. for $V(t,m_2)$).
Up to enlarge the probability space, we may assume that there exists $A\in\FR$ such that $P(A)=\lambda$ and $(A,M^1,M^2)$ are mutually independent.
Define now  $M^\lambda$ by
\[ M^\lambda_s=M_s^1\ind_A+M^2_s\ind_{A^c}, \; s\in[t,T]. \]
At first, note that $M^\lambda\in\MR(t,m^\lambda)$. Indeed for all $\phi \in C_b(\R^d)$, we have:
\[ \E[ M_t^{\lambda}(\phi)]= \E[ M_t^{^1}(\phi)\ind_A] +\E[ M_t^{2}(\phi)\ind_{A^c}]= \lambda m_1(\phi)+(1-\lambda)m_2(\phi)=m(\phi).\]
For $t\leq t_1\leq t_2 \leq T$, denoting  $\sigma (\FR^{M^1}_{t_1},\FR^{M^2}_{t_1},A)$ the $\sigma$-field generated by $\FR^{M^1}_{t_1}$,$\FR^{M^2}_{t_1}$ and $A$, we have
\begin{align*}
 \E[ M^{\lambda}_{t_2}(\phi) | \sigma (\FR^{M^1}_{t_1},\FR^{M^2}_{t_1},A) ] &=  \E[ M^{1}_{t_2}(\phi) |\sigma (\FR^{M^1}_{t_1},\FR^{M^2}_{t_1},A)]\ind_A + \E[ M^{2}_{t_2}(\phi) | \sigma (\FR^{M^1}_{t_1},\FR^{M^2}_{t_1},A) ]\ind_{A^c} \\
 &= \E[ M^{1}_{t_2}(\phi) |\FR^{M^1}_{t_1}]\ind_A + \E[ M^{2}_{t_2}(\phi) | \FR^{M^2}_{t_1} ]\ind_{A^c} \\&= p^{t_1,M^1_{t_1}}_{t_2}(\phi)\ind_A+p^{t_1,M^2_{t_1}}_{t_2}(\phi)\ind_{A^c}=p^{t_1,M^{\lambda}_{t_1}}_{t_2}(\phi).
\end{align*}
By taking conditional expectation given $\sigma(M^\lambda_s, s\in[t,t_1]) \subset \sigma (\FR^{M^1}_{t_1},\FR^{M^2}_{t_1},A)$ on both sides, we obtain
\[  \E[ M^{\lambda}_{t_2}(\phi) |\sigma(M^\lambda_s, s\in[t,t_1]) ] =p^{t_1,M^{\lambda}_{t_1}}_{t_2}(\phi).\]
Thus equality being true for all $t\leq t_1\leq t_2 \leq T$, it can be extended to the right-continuous filtration generated by $M^\lambda$ using that the processes $M^{\lambda}(\phi)$ and $p^{\cdot,M^{\lambda}_{\cdot}}_{t_2}(\phi)$ are c\`{a}dl\`{ag} and using the backward martingale convergence theorem. This concludes the proof of property $(ii)$ of the definition of $\MR(t,m)$.
\pa
We deduce that
\[ \begin{array}{rl}
\E\left[\int_t^TH(s,M^\lambda_s)ds\right]=&
\lambda\E\left[\int_t^TH(s,M^1_s)ds\right]+
(1-\lambda)\E\left[\int_t^TH(s,M^2_s)ds\right]\\
\leq &\lambda V(t,m^1)+(1-\lambda)V(t,m^2)+2\epsilon.
\end{array}\]
The result follows by letting $\varepsilon$ go to zero. 
\end{proof}

\pa We state now the main result of this section. It explains why we may consider $V$ as a natural definition of a value for the continuous-time game described in the introduction. 
\begin{Theorem}
\label{WV}
For all $(t,m)\in[0,T]\times\PR_2$, the limit
$\lim_{|\pi| \rightarrow 0} V_\pi(t,m)$ exists and coincides with $V(t,m)$, 
where for a partition $\pi=\{ t=t_1<\ldots <t_N=T\}$, $|\pi|$ denotes its mesh, i.e.:
\[|\pi|=\sup\{ |t_{i+1}-t_i|,\; i=1,\ldots,N-1\}.\]
\end{Theorem}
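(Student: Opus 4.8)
The plan is to prove the two inequalities $\limsup_{|\pi|\to 0}V_\pi(t,m)\le V(t,m)$ and $\liminf_{|\pi|\to 0}V_\pi(t,m)\ge V(t,m)$. Both use the discrete analogue $\MR_\pi(t,m)$ of $\MR(t,m)$: for $\pi=\{t=t_1<\dots<t_N=T\}$, the set of ``heat--martingales along $\pi$'', i.e.\ discrete processes $(\mu_q)_{q=1}^N$ in $(\PR_1,d_1)$ with $\E[\mu_1(\phi)]=m(\phi)$ and $\E[\mu_{q+1}(\phi)\mid\mu_1,\dots,\mu_q]=p^{t_q,\mu_q}_{t_{q+1}}(\phi)$ for all $\phi\in C_b(\R^d)$. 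I will use two routine facts. (i) Restricting $M\in\MR(t,m)$ to the grid gives an element of $\MR_\pi(t,m)$; conversely, interpolating $(\mu_q)\in\MR_\pi(t,m)$ by the pure heat flow on each $[t_q,t_{q+1})$ gives an element of $\MR(t,m)$ --- both directions follow from the Markov property \eqref{Markov} and the tower property. (ii) Since $s\mapsto H(s,M_s)$ is bounded and a.s.\ c\`adl\`ag (Lemma \ref{HLipsch}, point~4 of Remarks \ref{remjen}), the left--endpoint sums $\sum_q(t_{q+1}-t_q)H(t_q,M_{t_q})$ converge a.s.\ and in $L^1$ to $\int_t^TH(s,M_s)\,ds$; and when $M$ follows the heat flow inside the cells, the bound $d_1(p^{t_q,\mu}_s,\mu)\le\sqrt{d(s-t_q)}$ from \eqref{compd} plus the Lipschitz continuity of $H$ makes the error $O(\sqrt{|\pi|})$.

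\emph{Upper bound.} Fix $\epsilon>0$ and, after a routine approximation based on the continuity properties in Lemma \ref{dro}, take an $\epsilon$--optimal $M\in\MR(t,m)$ that reveals information only at finitely many times $r_1<\dots<r_k$, each into finitely many atoms, and follows the heat flow in between. In $\Gamma_\pi(t,m)$ player~$1$ reproduces $M$: on a bounded number of stages (independent of $|\pi|$) she encodes through her observed actions the belief splittings realizing the jumps of $M$ --- by the Aumann--Maschler splitting construction, using that the finite--stage value is convex in the belief (proved as in the Proposition above) so that any involuntary coarsening of player~$2$'s posterior only lowers player~$1$'s payoff; on every other stage $q$, $\mu$ being the current belief (deterministic given past reveals), she plays a non--revealing $\epsilon_q$--optimal action for the infinitesimal game at $\mu$, so by Isaacs' assumption \eqref{isaacs} the conditional stage payoff is $\le H(t_q,\mu)+\epsilon_q$ against any play of player~$2$. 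Summing, $\sup_\tau\gamma_\pi(t,m,\sigma,\tau)\le\E\big[\sum_q(t_{q+1}-t_q)H(t_q,M_{t_q})\big]+o(1)$, the $o(1)$ absorbing the $O(|\pi|)$ loss on the revealing stages, the $\sum_q(t_{q+1}-t_q)\epsilon_q$ and the time--Lipschitz errors; by fact (ii) this tends to $\E\big[\int_t^TH(s,M_s)\,ds\big]\le V(t,m)+\epsilon$. Hence $\limsup_{|\pi|\to0}V_\pi(t,m)\le V(t,m)+\epsilon$, and $\epsilon\downarrow0$ concludes.

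\emph{Lower bound.} Here I would use the recursive (Shapley) structure: $V_\pi(t,m)=v_1(m)$, where $v_q(\mu)$ is the value of the one--stage game with stage payoff $(t_{q+1}-t_q)f(t_q,\cdot)$ and continuation $v_{q+1}$ evaluated at the heat flow of the updated posterior. A backward induction gives that each $v_q$ is convex in $\mu$, and --- writing player~$1$'s move as a splitting $(\nu,(\mu_a))$ with $\int\mu_a\,\nu(da)=\mu$, and using that $f$ is $C$--Lipschitz in $x$ and that by \eqref{isaacs} the prior $\mu$ admits a non--revealing reply of player~$2$ worth $H(t_q,\mu)-\epsilon$ --- that on the atom $\mu_a$ the stage payoff is at least $H(t_q,\mu)-\epsilon-C\,d_1(\mu_a,\mu)$. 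Telescoping this along $\MR_\pi(t,m)$ yields
\[ V_\pi(t,m)\ \ge\ \inf_{(\mu_q)\in\MR_\pi(t,m)}\E\Big[\sum_{q=1}^{N-1}(t_{q+1}-t_q)H(t_q,\mu_q)\Big]\ -\ C\,R_\pi\ -\ \epsilon(T-t), \]
where $R_\pi:=\sup_{(\mu_q)}\E\big[\sum_q(t_{q+1}-t_q)\,d_1(\mu_{q+1},p^{t_q,\mu_q}_{t_{q+1}})\big]$ is the total $d_1$--size of the revelation increments. The infimum on the right is $\ge V(t,m)-O(\sqrt{|\pi|})$ by facts (i)--(ii), so it remains to prove $R_\pi\to0$. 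The plan for that is a Mertens--Zamir type estimate: the mean $N_q=\int x\,\mu_q(dx)$ is an $L^2$--bounded $\R^d$--martingale and $\int|x|^2\mu_q(dx)-d(t_q-t)$ is an $L^1$--bounded martingale (by the defining property of $\MR_\pi$ and $|p^{t,m}_s|_2^2=|m|_2^2+d(s-t)$); these moment bounds control the total ``quadratic variation'' of the belief martingale, and a Cauchy--Schwarz (or AM--GM) argument over the $N\sim|\pi|^{-1}$ stages turns it into $R_\pi=O(\sqrt{|\pi|})$, uniformly in $(\mu_q)$. Then $V_\pi(t,m)\ge V(t,m)-O(\sqrt{|\pi|})-\epsilon(T-t)$, and $|\pi|\to0$ followed by $\epsilon\downarrow0$ finishes.

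I expect the main obstacle to be exactly the bound $R_\pi\to0$: one must show that the extra information player~$1$ can profitably release has ``bounded total variation'' in the Wasserstein geometry on the non--compact space $\PR_2$, so that, spread over $O(|\pi|^{-1})$ stages of length $O(|\pi|)$, its effect on the payoff vanishes with the mesh. This is the continuous--state counterpart of the classical $O(1/\sqrt n)$ estimate for $n$--stage games with incomplete information, and making the moment bounds do the job on an unbounded state space is the delicate point.
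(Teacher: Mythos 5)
Your overall architecture is the same as the paper's: two inequalities, the upper one obtained by letting player~1 mimic an arbitrary $M\in\MR(t,m)$ along the grid, the lower one by letting player~2 best-reply non-revealingly to the posterior process $(\hat M^\pi_{t_q})$ induced by player~1's strategy, with everything reduced to showing that the time-weighted revelation increments $R_\pi=\sup_M\E\big[\sum_q(t_{q+1}-t_q)d_1(M_{t_{q+1}},p^{t_q,M_{t_q}}_{t_{q+1}})\big]$ vanish (this is exactly the paper's Lemma \ref{d1var}). However, your proposed proof of $R_\pi\to0$ does not work, and this is the heart of the theorem. Knowing that the means $\int x\,d\mu_q$ form an $L^2$-bounded martingale and that $\int|x|^2d\mu_q - d(t_q-t)$ is a martingale controls only \emph{two} scalar test functions; it gives no upper bound on $d_1(\mu_{q+1},\hat\mu_{q+1})=\sup_{f\ 1\text{-Lip}}(\mu_{q+1}-\hat\mu_{q+1})(f)$, since two measures with identical first and second moments can be far apart in $d_1$, and the supremum over $f$ does not commute with the martingale-orthogonality/Cauchy--Schwarz argument. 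The paper's Lemma \ref{d1var} has to work much harder: it selects measurably a near-optimal Kantorovich potential for each pair $(M_{t_q},\hat M_{t_q})$, covers the set of $1$-Lipschitz functions by a finite $\varepsilon$-net for the weighted norm $\|f\|_*=\sup_x|f(x)|/(1+|x|^2)$ (compactness which itself relies on the uniform integrability of the second moments of the beliefs), mollifies each net element to control second derivatives, and only then applies the martingale-increment orthogonality to each of the finitely many smooth test functions. The resulting bound is $2T\varepsilon(1+C_1)+N_\varepsilon\sqrt{|\pi|}\,O(\eta^{-1/2})+N_\varepsilon O(\sqrt\eta)$, so one gets convergence to $0$ only after sending $|\pi|\to0$, then $\eta\to0$, then $\varepsilon\to0$ --- in particular your claimed uniform rate $R_\pi=O(\sqrt{|\pi|})$ is not what comes out even of the correct argument, and nothing in your moment bounds produces it.

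A second, more repairable gap is in your upper bound: the reduction of an arbitrary $M\in\MR(t,m)$ to an $\epsilon$-optimal process revealing information only at finitely many times into finitely many atoms is asserted as a ``routine approximation'' but is not routine on the non-compact space $\PR_2$ of measure-valued martingales, and the paper deliberately avoids it. Instead it proves a generalized splitting lemma (Proposition \ref{splitting}, via Blackwell--Dubins) producing, for \emph{any} $M\in\MR(t,m)$, grid variables $(M'_{t_q})$ with the same law such that $M'_{t_q}$ is measurable in player~1's observations and the conditional law of $B_{t_q}$ given $(M'_{t_1},\dots,M'_{t_q})$ is $M'_{t_q}$; player~1 then just plays $u^*(t_q,M'_{t_q})$ and the conditional stage payoff is $\le H(t_q,M'_{t_q})$ directly, with no discretization of the revelation structure and no need to ``encode'' anything. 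On the positive side, your idea of interpolating a discrete belief sequence by the heat flow and comparing the left-endpoint Riemann sum to $\int_t^T H(s,M_s)ds$ via $d_1(p^{t_q,\mu}_s,\mu)\le\sqrt{d(s-t_q)}$ is sound and would let you bypass the Meyer--Zheng/Kurtz compactness argument (Proposition \ref{Mcompact}) that the paper uses in the lower bound; but without a correct proof of $R_\pi\to0$ the proof is incomplete.
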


\pa The proof requires several preliminary results, which are presented in the next subsections.

\subsection{A generalized splitting  lemma}

\pa In order to compare the limit points of $(V_\pi(t,m))_{|\pi|\rightarrow 0}$ and $V(t,m)$, we have to make correspond strategies in $\sigma\in\Sigma(\pi)$ and elements of $\MR(t,m)$. A classical tool for this, the so called ``splitting argument'' is well known for games where the asymmetrical observed parameter has a finite support  (see the elementary version in Aumann-Machler \cite{AumannMaschler} or Sorin \cite{Sorinfirstcourse}, and in \cite{CRRV} its adaptation to a dynamic setting). Here we need a more general version.\\

\pa Let us recall a fundamental result of Blackwell and Dubins.

\begin{Theorem}\label{dubinsapp}(Blackwell-Dubins \cite{blackwelldubins}) \\
Let $E$ be a polish space, $\Delta(E)$ the set of Borel probabilities on $E$, $([0,1],\BR([0,1]),\lambda)$ the unit interval equipped with Lebesgue's measure, and  $U$  the canonical element in $[0,1]$. There exists a measurable mapping
\[ \Phi_E : \Delta(E)\times [0,1]  \longrightarrow E  \]
such that for all $\mu \in \Delta(E)$, the law of $\Phi_E(\mu,U)$ is $\mu$.
\end{Theorem}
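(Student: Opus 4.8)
The plan is to prove the Blackwell--Dubins theorem by reducing the general Polish-space case to the case $E=[0,1]$, for which an explicit construction via (generalized) inverse cumulative distribution functions is available. So first I would record the standard fact that every Polish space $E$ is Borel-isomorphic to a Borel subset $E_0$ of $[0,1]$: there is a bijection $\iota:E\to E_0$ such that both $\iota$ and $\iota^{-1}$ are Borel measurable. This reduces the problem to constructing, for the space $[0,1]$ itself, a measurable map $\Psi:\Delta([0,1])\times[0,1]\to[0,1]$ with $\Psi(\mu,U)$ distributed as $\mu$ when $U$ is uniform; then one sets $\Phi_E(\mu,u):=\iota^{-1}\bigl(\Psi(\iota_*\mu,u)\bigr)$, where $\iota_*\mu$ is the pushforward measure. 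One must check that $\mu\mapsto\iota_*\mu$ is Borel measurable as a map $\Delta(E)\to\Delta([0,1])$ (it is, since $\mu\mapsto\int g\,d(\iota_*\mu)=\int g\circ\iota\,d\mu$ is measurable for every bounded Borel $g$, and the Borel $\sigma$-field on $\Delta([0,1])$ is generated by such evaluation maps), and that the image $\iota_*\mu$ is in fact supported on $E_0$ so that $\iota^{-1}$ can be applied $\iota_*\mu$-a.s.; off that null set one defines $\Phi_E$ arbitrarily (say, constantly equal to a fixed point of $E$).

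Next I would carry out the one-dimensional construction. Given $\mu\in\Delta([0,1])$, let $F_\mu(x)=\mu([0,x])$ be its cumulative distribution function, and define the quantile function
\[
\Psi(\mu,u)=\inf\{x\in[0,1]:F_\mu(x)\geq u\},\qquad u\in(0,1),
\]
with a convention at the endpoints $u\in\{0,1\}$. The classical inversion lemma gives that, for $U$ uniform on $[0,1]$, the law of $\Psi(\mu,U)$ is exactly $\mu$, because $\{u:\Psi(\mu,u)\leq x\}=\{u:u\leq F_\mu(x)\}$ for each $x$. The only genuinely technical point is joint measurability of $(\mu,u)\mapsto\Psi(\mu,u)$: I would obtain it by noting that $(\mu,x)\mapsto F_\mu(x)$ is measurable (for fixed $x$, $\mu\mapsto\mu([0,x])$ is Borel on $\Delta([0,1])$; and it is monotone, hence jointly measurable in $(\mu,x)$ by a standard argument approximating $F_\mu$ by its values on a countable dense set of $x$'s), and then writing
\[
\{(\mu,u):\Psi(\mu,u)\leq x\}=\{(\mu,u):u\leq F_\mu(x)\}
\]
for each rational $x$, which exhibits $\Psi$ as measurable since $\Psi(\mu,u)\leq x$ iff $\Psi(\mu,u)\leq x'$ for all rationals $x'>x$.

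I expect the main obstacle to be precisely this measurability bookkeeping rather than any deep idea: one has to be careful that the Borel structure on $\Delta(E)$ (the one generated by the maps $\mu\mapsto\mu(A)$, equivalently the Borel structure of the weak topology, which coincide for Polish $E$) is the one that makes both $\mu\mapsto\iota_*\mu$ and $\mu\mapsto F_\mu$ measurable, and that the exceptional null sets arising from the Borel-isomorphism step are handled coherently. Once those points are in place, the composition $\Phi_E(\mu,u)=\iota^{-1}(\Psi(\iota_*\mu,u))$ is measurable as a composition of measurable maps, and for fixed $\mu$ the law of $\Phi_E(\mu,U)$ is the pushforward under $\iota^{-1}$ of the law of $\Psi(\iota_*\mu,U)=\iota_*\mu$, which is $\mu$, as required. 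Alternatively, since the statement is quoted from Blackwell--Dubins \cite{blackwelldubins}, one may simply cite it; the above is the route I would take to make the excerpt self-contained.
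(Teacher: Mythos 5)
Your proposed proof is correct, and it is worth noting that the paper itself offers no proof of this statement at all: it is imported verbatim as a citation of Blackwell--Dubins, so there is nothing internal to compare against. Your route --- Borel-isomorphism of the Polish space $E$ with a Borel subset $E_0\subset[0,1]$, followed by the quantile construction $\Psi(\mu,u)=\inf\{x:F_\mu(x)\geq u\}$ on $[0,1]$ --- is the standard way to make the quoted (purely measurable) version self-contained, and the points you flag as the real work are indeed the right ones: joint measurability of $(\mu,u)\mapsto\Psi(\mu,u)$ via the identity $\{\Psi(\mu,\cdot)\leq x\}=\{u\leq F_\mu(x)\}$ (which uses right-continuity of $F_\mu$), Borel measurability of $\mu\mapsto\mu([0,x])$ and of the pushforward $\mu\mapsto\iota_*\mu$, and the harmless null set where $\iota^{-1}$ must be extended arbitrarily. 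One small caveat: the actual theorem of Blackwell and Dubins is stronger than what is quoted here --- they produce a $\Phi_E$ that is additionally continuous in $\mu$ at every point, in the sense that $\mu_n\to\mu$ weakly implies $\Phi_E(\mu_n,U)\to\Phi_E(\mu,U)$ almost surely --- and your construction does not yield that (the Borel isomorphism destroys the topology), but the paper only ever uses the measurable-selection version, for which your argument suffices.
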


\begin{Proposition}\label{splitting}
Let $M$ in $\MR(t,m)$ and $\pi=\{t=t_1<...<t_N=T\}$ a partition of $[t,T]$. 
Given a Brownian motion $(B_s)_{s \in [t,T]}$ with initial law $m$, and $(U_1,...,U_{N-1})$ some independent random variables independent of $(B_s)$, all uniformly distributed on $[0,1]$ and defined on a same  probability space $(\bar\Omega,\bar\FR,\bar\P)$, there exist variables $(M'_{t_q})_{q=1,...,N-1}$ defined on $\bar\Omega$, having the same law as $(M_{t_q})_{q=1,...,N-1}$, and such that for all $q=1,...,N-1$:
\begin{itemize}
\item $M'_{t_q}$ is measurable with respect to $(B_{t_1},U_1,...,B_{t_{q}},U_{q})$.
\item The conditional law of $B_{t_q}$ given $(M'_{t_1},...,M'_{t_q})$ is precisely $M'_{t_q}$. 
\end{itemize}
\end{Proposition}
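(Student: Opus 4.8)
The plan is to construct the variables $M'_{t_q}$ recursively in $q$: at stage $q$ we use the observation $B_{t_q}$ together with the fresh uniform variable $U_q$ to resolve, via the Blackwell--Dubins map of Theorem \ref{dubinsapp}, the extra randomness that $M_{t_q}$ carries beyond $M_{t_1},\dots,M_{t_{q-1}}$. The construction is entirely driven by one structural consequence of property $(ii)$ of $\MR(t,m)$: the conditional law of $M_{t_{q+1}}$ given $(M_{t_1},\dots,M_{t_q})$ is a probability on $\PR_1$ whose barycentre is exactly the heat flow $\rho_{t_{q+1}-t_q}*M_{t_q}=p^{t_q,M_{t_q}}_{t_{q+1}}$; this is precisely what makes a ``splitting'' of a Brownian value $B_{t_{q+1}}$, obtained from $B_{t_q}$ by adding an independent Gaussian increment, compatible with the law of $M_{t_{q+1}}$.

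\textbf{Step 1: disintegration and the barycentre identity.} Since $(\PR_1,d_1)$ and $\R^d$ are Polish, all regular conditional laws below exist and can be chosen measurably. Let $\lambda_0\in\Delta(\PR_1)$ be the law of $M_{t_1}$, and for $q=1,\dots,N-2$ let $\lambda_q(\mu_1,\dots,\mu_q;d\mu_{q+1})$ be a regular version of the conditional law of $M_{t_{q+1}}$ given $(M_{t_1},\dots,M_{t_q})$. From $(i)$, $\int_{\PR_1}\mu\,\lambda_0(d\mu)=m$. From $(ii)$, the tower property, and the fact that $M_{t_q}$ is $\sigma(M_{t_1},\dots,M_{t_q})$-measurable, we get $\E[M_{t_{q+1}}(\phi)\mid M_{t_1},\dots,M_{t_q}]=p^{t_q,M_{t_q}}_{t_{q+1}}(\phi)$ for all $\phi\in C_b(\R^d)$; equivalently, for the law of $(M_{t_1},\dots,M_{t_q})$-a.e. $(\mu_1,\dots,\mu_q)$,
\[ \int_{\PR_1}\mu_{q+1}\,\lambda_q(\mu_1,\dots,\mu_q;d\mu_{q+1})=\rho_{t_{q+1}-t_q}*\mu_q=p^{t_q,\mu_q}_{t_{q+1}}.\]
Next, for each $q$ consider on $\PR_1^q\times\R^d\times\PR_1$ the probability that samples $(\mu_1,\dots,\mu_q)$ from the law of $(M_{t_1},\dots,M_{t_q})$, then $\mu_{q+1}$ from $\lambda_q(\mu_1,\dots,\mu_q;\cdot)$, then $x$ from $\mu_{q+1}$; by the identity above its $x$-conditional-marginal given $(\mu_1,\dots,\mu_q)$ is $\rho_{t_{q+1}-t_q}*\mu_q$. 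Let $\kappa_q(\mu_1,\dots,\mu_q,x;d\mu_{q+1})$ be a measurable version of the conditional (posterior) law of $\mu_{q+1}$ given $(\mu_1,\dots,\mu_q,x)$ under this probability; for $q=0$ one samples $\mu_1\sim\lambda_0$ then $x\sim\mu_1$ (so the $x$-marginal is $m$) and lets $\kappa_0(x;d\mu_1)$ be the posterior of $\mu_1$ given $x$. Finally apply Theorem \ref{dubinsapp} with $E=\PR_1$ to obtain a measurable map $\Phi:=\Phi_{\PR_1}:\Delta(\PR_1)\times[0,1]\to\PR_1$.

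\textbf{Step 2: recursion and verification.} Set $M'_{t_1}:=\Phi(\kappa_0(B_{t_1};\cdot),U_1)$ and, once $M'_{t_1},\dots,M'_{t_q}$ are defined, $M'_{t_{q+1}}:=\Phi(\kappa_q(M'_{t_1},\dots,M'_{t_q},B_{t_{q+1}};\cdot),U_{q+1})$. Measurability of $M'_{t_q}$ with respect to $(B_{t_1},U_1,\dots,B_{t_q},U_q)$ follows by induction. It remains to prove, by induction on $q$, that $(M'_{t_1},\dots,M'_{t_q})$ has the same law as $(M_{t_1},\dots,M_{t_q})$ and that the conditional law of $B_{t_q}$ given $\sigma(M'_{t_1},\dots,M'_{t_q})$ is $M'_{t_q}$. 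Base case: $U_1$ is independent of $B_{t_1}$, and $B_{t_1}$ has law $m$, the $x$-marginal of the joint defining $\kappa_0$; by the Blackwell--Dubins property the pair $(B_{t_1},M'_{t_1})$ then has exactly the law ``$\mu_1\sim\lambda_0$, $x\sim\mu_1$'', yielding both claims. Inductive step: write $\GR_q=\sigma(M'_{t_1},\dots,M'_{t_q})\subseteq\sigma(B_{t_1},U_1,\dots,B_{t_q},U_q)$. Since the increment $B_{t_{q+1}}-B_{t_q}$ is independent of $\sigma(B_{t_1},U_1,\dots,B_{t_q},U_q)$ (independence of Brownian increments and independence of the $U_k$ from $B$), and the conditional law of $B_{t_q}$ given $\GR_q$ is $M'_{t_q}$ by the inductive hypothesis, the conditional law of $B_{t_{q+1}}$ given $\GR_q$ equals $\rho_{t_{q+1}-t_q}*M'_{t_q}$, which is exactly the $x$-conditional-marginal of the joint defining $\kappa_q$ with $(\mu_1,\dots,\mu_q)=(M'_{t_1},\dots,M'_{t_q})$. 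Because $U_{q+1}$ is independent of $\sigma(B_{t_1},U_1,\dots,B_{t_q},U_q,B_{t_{q+1}})\supseteq\sigma(\GR_q,B_{t_{q+1}})$ and is uniform, the Blackwell--Dubins property (applied conditionally on $\sigma(\GR_q,B_{t_{q+1}})$) gives that, conditionally on $(\GR_q,B_{t_{q+1}})$, $M'_{t_{q+1}}\sim\kappa_q(\GR_q,B_{t_{q+1}};\cdot)$. By Bayes' rule (the defining property of the posterior $\kappa_q$), the conditional law of $(B_{t_{q+1}},M'_{t_{q+1}})$ given $\GR_q$ is therefore ``$\mu_{q+1}\sim\lambda_q(M'_{t_1},\dots,M'_{t_q};\cdot)$, $x\sim\mu_{q+1}$''. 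Combined with the inductive law identity and the definition of $\lambda_q$ this shows $(M'_{t_1},\dots,M'_{t_{q+1}})$ has the law of $(M_{t_1},\dots,M_{t_{q+1}})$, while reading off the $x|(\GR_q,\mu_{q+1})$ marginal shows the conditional law of $B_{t_{q+1}}$ given $\sigma(\GR_q,M'_{t_{q+1}})=\sigma(M'_{t_1},\dots,M'_{t_{q+1}})$ is $M'_{t_{q+1}}$. This closes the induction and proves the Proposition.

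\textbf{Main obstacle.} The conceptual heart is Step 1: property $(ii)$ is exactly what forces the barycentre of the one-step conditional law of $M$ to be the heat-semigroup image of the current value, and without this identity the posterior kernels $\kappa_q$ would not be compatible with building $B_{t_{q+1}}$ from $B_{t_q}$ by an independent Gaussian increment. The remaining work is bookkeeping: choosing jointly measurable versions of $\lambda_q$ and $\kappa_q$ on Polish spaces (standard) and carefully tracking that the fresh increment $B_{t_{q+1}}-B_{t_q}$ and the fresh uniform $U_{q+1}$ are independent of the $\sigma$-field generated by everything used up to stage $q$.
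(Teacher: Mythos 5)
Your proof is correct and follows essentially the same route as the paper: a recursive construction in which $M'_{t_q}$ is sampled, via the Blackwell--Dubins map and a fresh uniform, from the posterior of $M_{t_q}$ given the current Brownian value and the previously constructed $M'$'s. The only difference is presentational -- the paper realizes your reference joint law by building auxiliary variables $\tilde B_{t_q}=\Phi_{\R^d}(M_{t_q},\tilde U_q)$ on the space carrying $M$, while you define the joints and posteriors $\kappa_q$ directly as measures and, usefully, make explicit the barycentre identity from property $(ii)$ and the independent-increment argument that the paper leaves implicit.
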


\begin{proof}
At first, let us assume that $M$ is defined on a probability space $(\Omega,\FR,\P)$ sufficiently large so that there exists a family  $(\tilde U_1,...,\tilde U_{N-1})$ of independent uniform random variables independent of $M$, all defined on $\Omega$. Define the variable $\tilde{B}_{t_1}=\Phi_{\R^d}(M_{t_1},\tilde U_1)$, so that the conditional law of  $\tilde{B}_{t_1}$ given $M_{t_1}$ is almost surely equal to $M_{t_1}$. Let $f_{t_1}(\tilde{B}_{t_1})$ denote a version of the conditional law of $M_{t_1}$ given $\tilde{B}_{t_1}$, meaning that $f_{t_1}$ is a measurable map from $\R^d$ to  $\Delta(\PR_1)$.
Define then on the probability space $\bar\Omega$ the variable $M'_{t_1}=\Phi_{\PR_1}(f_{t_1}(B_{t_1}),U_1)$, so that $(B_{t_1},M'_{t_1})$ has the same law as $(\tilde{B}_{t_1},M_{t_1})$, implying that the conditional law of $B_{t_1}$ given $M'_{t_1}$ is almost surely equal to $M'_{t_1}$.
\pa
We proceed by induction. Let us assume that the variables $(M'_{t_1},....,M'_{t_{q-1}})$ are defined on $\bar\Omega$ for some $1\leq q \leq N-2$, and have the required properties.
Define $\tilde{B}_{t_q}=\Phi_{\R^d}(M_{t_q},\tilde U_q)$, so that the conditional law of  $\tilde{B}_{t_q}$ given $(M_{t_1},....,M_{t_q})$ is almost surely equal to $M_{t_q}$. Let $f_{t_q}(\tilde{B}_{t_q},M_{t_1},...,M_{t_{q-1}})$ denote a version of the conditional law of $M_{t_q}$ given $(\tilde{B}_{t_q},M_{t_1},...,M_{t_{q-1}})$, meaning that $f_{t_q}$ is a measurable map from $\R^d \times (\PR_1)^{q-1}$ to $\Delta(\PR_1)$.
Define then, on the probability space $\bar\Omega$, the variable $M'_{t_q}=\Phi_{\PR_1}(f_{t_q}(B_{t_q},M'_{t_1},....,M'_{t_{q-1}}),U_q)$, so that $(B_{t_q},M'_{t_1},...,M'_{t_q})$ has the same law as $(\tilde{B}_{t_q},M_{t_1},...,M_{t_q})$, implying that the conditional law of $B_{t_q}$ given $(M'_{t_1},...,M'_{t_q})$ is almost surely equal to $M'_{t_q}$.
\end{proof}

\subsection{Compactness and continuity}\label{compactness}

\pa 
The next step is to show that, up to enlarge the underlying probability space,  the infimum in the formulation \eqref{infmeasure} is attained for some law in $\tilde{\MR}(t,m)$. This statement is equivalent to claim that the space of probability measures $\tilde\MR(t,m)$ defined in \eqref{infmeas} is compact for the appropriate topology.
\pa 
For that, we endow the set $\D([t,T],\PR_1)$ with the topology $\kappa_1$ of convergence in measure together with the convergence of the value at time $T$.
% (this is not necessary but convenient). 
It means that a sequence $h^n$ converges to $h$ if 
\[ \forall \varepsilon>0, \; \int_{[t,T]} \ind_{d_1(h^n(s),h(s))>\varepsilon} ds \underset{ n\rightarrow \infty}{\longrightarrow} 0, \; d_1(h^n(T),h(T))\underset{ n\rightarrow \infty}{\longrightarrow} 0.\]
This topology is metrizable, and makes the space $(\D([t,T],\PR_1),\kappa_1)$ a separable metric space which is not topologically complete.
Let $\LR(\kappa_1)$ denote the associated convergence in law on $\Delta(\D([t,T],\PR_1))$.

%\pa Remark that $\PR_2 \subset \PR_1$ is not closed but a countable union of compacts. 

\pa The following lemma is a corollary of the main result in Kurtz \cite{Kurtz} which extends the classical result of Meyer-Zheng \cite{MeyerZheng} to measure-valued processes:  \\

\begin{Lemma}[Consequence of Theorem 1.1  and Corollary 1.4 in \cite{Kurtz}]\label{tension}
Consider a sequence of processes $(M^n)_{n\geq 0}\subset\MR(t,m)$ for $m\in \PR_2$.
Then the set of laws of the processes $(M^n)_{n \geq 0}$ is $\LR(\kappa_1)$-relatively compact. \end{Lemma}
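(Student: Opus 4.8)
The plan is to derive the statement from Kurtz's tightness criterion \cite{Kurtz}, which for $(\PR_1,d_1)$-valued c\`adl\`ag processes reduces $\LR(\kappa_1)$-relative compactness of $(M^n)_{n\geq 0}$ to two ingredients: a \emph{compact containment} condition in $(\PR_1,d_1)$, and a uniform \emph{Meyer--Zheng} (conditional variation) bound for the real processes $M^n(\phi)$ obtained by integrating against a fixed countable family $\DR=\{\phi_k\}_{k\geq1}$ of test functions that determines $d_1$-convergence on $d_2$-bounded subsets of $\PR_1$. I would first fix such a family inside $C^\infty_c(\R^d)$; this is possible since weak convergence on $\PR(\R^d)$ is metrizable and, restricted to a set of measures with uniformly bounded second moment --- which is $d_1$-compact by Lemma \ref{dro}(3) and carries exactly the weak topology there by Lemma \ref{dro}(5) --- it is determined by countably many smooth, compactly supported functions. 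The point of insisting on smoothness is that each $\phi_k$ then has a bounded Laplacian, which is precisely what will make the conditional-variation estimate uniform over partitions.

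For compact containment I would use \eqref{propM3} with $\psi(x)=|x|^2$: since $\E[M^n_s(\psi)]=p^{t,m}_s(\psi)=|m|_2^2+d(s-t)\le |m|_2^2+dT$ uniformly in $s\in[t,T]$ and $n$, Markov's inequality together with Fubini yields, for every $\eta>0$, a radius $R$ with $\E\big[\int_t^T\ind_{|M^n_s|_2>R}\,ds\big]\le\eta$ and $\P(|M^n_T|_2>R)\le\eta$ for all $n$; by Lemma \ref{dro}(3) the ball $\BR_2(R)$ is $d_1$-compact, so this is exactly the compact-containment hypothesis, for the trajectories and for the terminal value at time $T$.

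For the conditional-variation bound I would fix $k$ and work with $Y^n:=M^n(\phi_k)$, which is real, bounded by $\|\phi_k\|_\infty$ and c\`adl\`ag by Remarks \ref{remjen}(4). From the heat equation \eqref{heat} and integration by parts, $p^{s,m}_{s'}(\phi_k)-m(\phi_k)=\tfrac12\int_s^{s'}p^{s,m}_u(\Delta\phi_k)\,du$, so $|p^{s,m}_{s'}(\phi_k)-m(\phi_k)|\le\tfrac12(s'-s)\|\Delta\phi_k\|_\infty$. Combined with property $(ii)$ of $\MR(t,m)$, this gives, for any subdivision $t=s_0<\dots<s_J=T$,
\[\big|\E[Y^n_{s_{j+1}}-Y^n_{s_j}\mid\FR^{M^n}_{s_j}]\big|=\big|p^{s_j,M^n_{s_j}}_{s_{j+1}}(\phi_k)-M^n_{s_j}(\phi_k)\big|\le\tfrac12(s_{j+1}-s_j)\|\Delta\phi_k\|_\infty,\]
whence, summing the telescoping bound, a conditional variation at most $\tfrac12(T-t)\|\Delta\phi_k\|_\infty$, uniformly in $n$ and in the subdivision (conditioning on $\FR^{M^n}_{s_j}$ rather than on the self-generated filtration of $Y^n$ only increases the conditional variation, so this suffices). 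Having verified these two ingredients for every $k$, I would invoke Theorem 1.1 and Corollary 1.4 of \cite{Kurtz} to conclude that the laws of $(M^n)_{n\geq0}$ are $\LR(\kappa_1)$-relatively compact, the ``value at time $T$'' component of $\kappa_1$ being the refinement handled by Corollary 1.4 and fed by the terminal bound on $|M^n_T|_2$.

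The main obstacle I expect is the choice of the test family $\DR$: a naive choice of Lipschitz $\phi_k$ would only give increment bounds of order $\sqrt{s_{j+1}-s_j}$, whose sum over a fine partition diverges, so one genuinely needs the smoothing effect of the heat flow through a bounded Laplacian --- while at the same time $\DR$ must remain rich enough to recover $d_1$-convergence on $d_2$-bounded sets. The remaining work is bookkeeping: verifying that the conditions established above are stated in exactly the form required by Theorem 1.1 and Corollary 1.4 of \cite{Kurtz} (compact containment both for trajectories and for the terminal marginal, conditional variation tested against a countable convergence-determining family), which is routine once the family has been chosen well.
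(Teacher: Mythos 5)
Your proposal follows essentially the same route as the paper: verify Kurtz's compact containment condition using the second moment $\psi(x)=|x|^2$, then verify the uniform conditional-variation bound of Corollary 1.4 for a countable separating family of linear functionals $\mu\mapsto\mu(\phi_k)$ with \emph{smooth} $\phi_k$, exploiting exactly the point you identify as the crux --- that property $(ii)$ of $\MR(t,m)$ plus the heat flow turns the increment $\E[M^n_{s_{j+1}}(\phi_k)-M^n_{s_j}(\phi_k)\mid\FR^{M^n}_{s_j}]$ into something of order $(s_{j+1}-s_j)\|\Delta\phi_k\|_\infty$ rather than $\sqrt{s_{j+1}-s_j}$. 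The paper makes the same choice (it uses $\cos(\langle t_j,\cdot\rangle)$, $\sin(\langle t_j,\cdot\rangle)$ rather than compactly supported functions, but that is immaterial), and your remark that enlarging the conditioning $\sigma$-field only increases the conditional variation is correct.

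The one substantive discrepancy is the compact containment step. The paper states the hypothesis it verifies as
\[
\liminf_{n}\ \P\bigl(M^n_s\in K,\ \forall s\in[t,T]\bigr)\ \geq\ 1-\varepsilon,
\]
i.e.\ a \emph{uniform-in-time} containment, and obtains it by observing that $s\mapsto M^n_s(\psi)$ is a non-negative submartingale (since $\E[M^n_{s'}(\psi)\mid\FR_s]=M^n_s(\psi)+d(s'-s)$) and applying Doob's maximal inequality. Your Markov-plus-Fubini argument only yields $\E\bigl[\int_t^T\ind_{|M^n_s|_2>R}\,ds\bigr]\leq\eta$, i.e.\ a bound on the expected occupation time outside the ball; by Chebyshev this controls $\P(\lambda\{s:|M^n_s|_2>R\}>\delta)$ but it does \emph{not} control $\P(\sup_{s}|M^n_s|_2>R)$, so it does not give the condition in the form the paper (and, as quoted there, Kurtz's Theorem 1.1) requires. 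If the occupation-time form of compact containment were sufficient for the convergence-in-measure topology your bound would do, but as written you should upgrade this step: note that $M^n(\psi)$ is a submartingale and invoke Doob, which costs nothing extra given the estimate you already have on $\E[M^n_T(\psi)]$. The rest of the argument, including feeding the terminal-time marginal into Corollary 1.4 to handle the convergence of the value at $T$ built into $\kappa_1$, matches the paper.
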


\begin{proof}
In order to exactly fit in the framework of \cite{Kurtz}, one may extend the definition of the processes to $(M^n)_{s \in [t,+\infty)}$ by $M^n_s=M^n_T$ for $s\geq T$.
We have to verify  the condition $C1.1(i)$ (compactness containment) of Theorem 1.1 in \cite{Kurtz}, i.e. that for all $\varepsilon>0$, there exists a compact $K \subset \PR_1$ such that 
\[ \liminf_{n \rightarrow \infty} \P(M^n_s \in K, \forall s\in [t,T]) \geq 1-\varepsilon. \]
Let $\psi(x)=|x|^2$. Note that, using the properties of the Gaussian kernel, for all $s'\geq s$,
\[  \E[ M^n_{s'}(\psi)| \FR_{s}] =p^{s,M^n_s}_{s'}(\psi)= M^n_s(\psi) +d(s'-s).\]
This implies that $M^n(\psi)$ is a non-negative submartingale.\\
Using Doob's inequality, we have therefore
\[ \P ( \sup_{s\in [t,T]} M^n_s(\psi) > C ) \leq \frac{\E[ M^n_T(\psi)]}{C} = \frac{m(\psi)+d(T-t)}{C}.\]
As the set $\{ \mu \in \PR_1 \,|\, \mu(\psi) \leq C \}$ is compact in $(\PR_1,d_1)$ (see Lemma \ref{dro}), the condition holds by choosing $C \geq \frac{m(\psi)+d(T-t)}{\varepsilon}$.
\pa
We verify now the condition of Corollary 1.4. in \cite{Kurtz}, i.e. that there exists a countable separating subset $\{f_i\}_{i\in I}$ in $C_b(\PR_1)$ such that for all $i \in I$ 
\begin{equation}
\label{gnu}
\sup_n \sup_{0=t_0<...<t_k<...<t_N=T} \E\left[ \sum_{k=0}^{N-1} \left| \E[f_i(M^n_{t_{k+1}}) - f_i(M^n_{t_k}) | \FR^{M^n}_{t_k}] \right|\right] <+\infty.
\end{equation}
For this, we can choose a sequence $(\phi_i)$ in $ C^\infty(\R^d)$ which separates points in $\PR_1$ (for example, one may choose the maps $\cos(\langle t_j, \cdot\rangle )$ and $\sin(\langle t_j, \cdot\rangle )$ when $t_j$ varies in a countable dense subset of $\R^d$), so that the maps $f_i:\mu \rightarrow \int \phi_i d\mu$ are separating in $C_b(\PR_1)$.
Moreover for these maps, the quantity \eqref{gnu} is bounded by $(T-t)\|D^2\phi_i\|_\infty$ since for all $i,k$, It\^o's formula implies:
\[ |\E[M^n_{t_{k+1}}(\phi_i) - M^n_{t_k}(\phi_i) | \FR^{M^n}_{t_k}]|=|p^{t_k,M^n_{t_k}}_{t_{k+1}}(\phi_i) -M^n_{t_k}(\phi_i) | \leq \frac{d}{2}\|D^2\phi_i\|_\infty(t_{k+1}-t_k).\]

\pa The result of Kurtz implies (see point (b) of Theorem 1.1 having in mind that Lebesgue-almost sure convergence of trajectories implies convergence in measure) that there exist a process $M$ and subsequence of $M^n$ which $\LR(\kappa_1)$-converges to  $M$. The result follows. 
\end{proof}

\pa A similar result for martingales can be found in Cardaliaguet-Rainer \cite{cr3} with a proof based on the method of Meyer and Zheng \cite{MeyerZheng}.

\begin{Proposition}
\label{Mcompact}
$\tilde\MR(t,m)$ is $\LR(\kappa_1)$  compact in $\Delta(\D([t,T],\PR_1))$. 
%Or equivalently: for all sequence of processes $(M^n)_{n\geq 0}\subset\MR(t,m)$, we can find -up to enlarge the underlying probability space- some process $M\in\MR(t,m)$ such that $(M^n)\overset{\kappa_1}{\rightarrow} M$ almost surely.
\end{Proposition}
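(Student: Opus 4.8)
The plan is to deduce $\LR(\kappa_1)$-compactness of $\tilde\MR(t,m)$ by combining the relative compactness already established in Lemma \ref{tension} with the fact that $\tilde\MR(t,m)$ is $\LR(\kappa_1)$-closed; since a relatively compact set which is also closed is compact, this suffices. So the real content is to show closedness: if $\mu^n \in \tilde\MR(t,m)$ and $\mu^n \to \mu$ in $\LR(\kappa_1)$, then $\mu \in \tilde\MR(t,m)$, i.e. the canonical process $(m_s)_{s\in[t,T]}$ under $\mu$ satisfies $(i)$ and $(ii)$ of the definition of $\MR(t,m)$.

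First I would set up a Skorokhod-type representation (or argue directly with the functionals), realizing $\mu^n$ and $\mu$ on a common space so that $m^n \to m$ in the $\kappa_1$-sense almost surely: convergence in Lebesgue measure on $[t,T]$ together with $d_1(m^n_T, m_T) \to 0$. Property $(i)$ at the initial time is mildly delicate because time $t$ is a single (non-terminal) instant and $\kappa_1$ controls values only up to Lebesgue-null sets; here I would use that for each $\mu^n$ the map $s \mapsto \E_{\mu^n}[m_s(\psi)]$ is explicitly $m(\psi) + d(s-t)$ for $\psi = |x|^2$, together with the submartingale structure, to pin down behaviour near $t$ — alternatively one observes $(i)$ follows from $(ii)$ applied with $t_1 = t$ once $(ii)$ is known, reducing the problem. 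For property $(ii)$, the strategy is to test it in integrated form: for $\phi \in C_b(\R^d)$, $1$-Lipschitz bounded $\varphi$, a bounded continuous functional $g$ of $(m_r)_{r \le t_1}$, and after integrating $t_1, t_2$ over small intervals to avoid fixed-time issues, show
\[
\E_\mu\Big[\big(m_{t_2}(\phi) - p^{t_1,m_{t_1}}_{t_2}(\phi)\big)\, g\big((m_r)_{r\le t_1}\big)\Big] = 0,
\]
by passing to the limit in the corresponding identity for $\mu^n$. The key analytic inputs for this passage are: the continuity of $(s,\nu) \mapsto p^{s,\nu}_{s'}(\phi)$ in the $d_1$-variables (Lemma \ref{dp}), so that $p^{t_1,m^n_{t_1}}_{t_2}(\phi) \to p^{t_1,m_{t_1}}_{t_2}(\phi)$ for a.e. $t_1$; uniform integrability coming again from the submartingale bound $\sup_n \E_{\mu^n}[\sup_{s} m_s(\psi)] < \infty$ and Doob, which upgrades a.e. convergence to $L^1$ convergence of the relevant terms; and the fact that the filtration is the canonical one, so that a generating family of test functionals $g$ of past coordinates is available.

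The main obstacle I anticipate is the interplay between the weak topology $\kappa_1$ (which is insensitive to the values of trajectories on Lebesgue-null time sets, and in particular does not a priori control fixed times such as $t$, $t_1$, $t_2$) and the martingale property $(ii)$, which is stated pointwise in time. The remedy is systematic time-averaging: replace fixed times by Lebesgue-averages over small windows, use Fubini together with the continuity of $p^{\cdot,\cdot}_{\cdot}$ and the right-continuity/càdlàg structure to recover the pointwise statement in the limit, and handle the terminal and initial instants separately (the terminal time $T$ is controlled directly by $\kappa_1$; the initial time $t$ is controlled by the explicit second-moment identity or by deducing $(i)$ from $(ii)$). One must also check that the limit law $\mu$ is genuinely supported on càdlàg $\PR_1$-valued paths with the submartingale second-moment bound, which follows from Fatou applied along the representation together with the uniform estimate $\E_{\mu^n}[\sup_s m_s(\psi)] \le m(\psi) + d(T-t)$ used in the proof of Lemma \ref{tension}.
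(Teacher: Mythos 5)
Your overall strategy is exactly the paper's: relative compactness from Lemma \ref{tension} plus $\LR(\kappa_1)$-closedness, the latter via a Skorokhod representation giving almost sure $\kappa_1$-convergence, hence $d_1$-convergence at every time $s$ in a full-measure set $R\ni T$ (after extraction), followed by passage to the limit in the identities $\E[f_1(M^n_{t_1})\cdots f_k(M^n_{t_k})M^n_{s'}(\phi)]=\E[f_1(M^n_{t_1})\cdots f_k(M^n_{t_k})p^{s,M^n_s}_{s'}(\phi)]$ using the $d_1$-continuity of $m'\mapsto p^{s,m'}_{s'}$, and finally an extension from $R$ to all times via the c\`adl\`ag structure and the right-continuous augmentation of the filtration. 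Your time-averaging device is an acceptable substitute for the paper's ``work at times in $R$ and extend by right-continuity,'' and your worry about uniform integrability is unnecessary here since $\phi$ and the $f_i$ are bounded, so bounded convergence suffices.

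The one step that would fail as written is your treatment of property $(i)$. Neither of your two proposed remedies works: the identity $\E_{\mu^n}[m_s(\psi)]=m(\psi)+d(s-t)$ concerns only $\psi(x)=|x|^2$ and cannot yield $\E_\mu[m_t(\phi)]=m(\phi)$ for general $\phi\in C_b(\R^d)$; and $(i)$ does \emph{not} follow from $(ii)$ with $t_1=t$, since taking expectations there only gives the consistency relation $\E[m_{t_2}(\phi)]=\E[p^{t,m_t}_{t_2}(\phi)]$, which does not identify the mean measure of $m_t$ as $m$. The correct argument (the paper's) is: for $s\in R$ one has $\E[M_s(\phi)]=\lim_n\E[M^n_s(\phi)]=p^{t,m}_s(\phi)$; then let $s\downarrow t$ along $R$, using that the limit path is c\`adl\`ag (hence right-continuous at $t$), dominated convergence, and $p^{t,m}_s(\phi)\to m(\phi)$. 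This is a small, fixable gap, but as stated the initial condition is not established.
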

\begin{proof}
Using the preceding lemma \ref{tension}, it remains to prove that $\tilde\MR(t,m)$ is closed  in $\Delta(\D([t,T],\PR_1))$.
Let $(\mu^n)$ be a sequence  in $\tilde\MR(t,m)$ that converges for $\LR(\kappa_1)$ to some law $\mu$  in $\Delta(\D([t,T],\PR_1))$. Using Skorokhod representation Theorem for separable metric spaces (see Theorem 11.7.31 in \cite{dudley}), we can find on some probability space $(\Omega,\FR,\P)$ a sequence of processes  $(M^n)_{n\geq 1} $ of law $\mu^n$ and a process $M=(M_s)_{s\in [t,T]}$ of law $\mu$ such that $(M^n)\overset{\kappa_1}{\rightarrow} M$ almost surely. Up to extracting a subsequence, we can also assume that there exists a subset $R$ of full Lebesgue measure in $[t,T]$ and containing $T$ such that for all $s \in R$,  $M^n_s \overset{d_1}{\rightarrow} M_s$ almost surely. 
\pa
%Let $(\phi_i)_{i \in I}$ denote a countable separating  family of bounded continuous (or $1$-Lipschitz) functions on $\R^d$ (?? convergence determining ?? or all functions).
Let $\phi \in C_b(\R^d)$.  Then, for all $s\in R$, we have almost surely
\[  M^n_s(\phi) \rightarrow M_s(\phi).\]
Given some bounded continuous maps $f_1,...,f_k: \PR_1 \rightarrow \R$ and $t_1,...,t_k \leq s \leq s'$ in $R$, we have
\[ \forall n \geq 0, \; \E[ f_1(M^n_{t_1})...f_k(M^n_{t_k}) M^n_{s'}(\phi)] = \E[ f_1(M^n_{t_1})...f_k(M^n_{t_k}) p^{s,M^n_s}_{s'}(\phi)].\]
Taking the limit as $n$ goes to $+\infty$, we deduce that 
\[  \E[ f_1(M_{t_1})...f_k(M_{t_k}) M_{s'}(\phi)] = \E[ f_1(M_{t_1})...f_k(M_{t_k}) p^{s,M_s}_{s'}(\phi)],\]
since the map $m' \rightarrow p^{s,m'}_{s'}$ is $d_1$-continuous.
Recall now that, by definition, $M$ is c\`{a}dl\`{a}g. This implies that $\sigma(M_r, r\in [t,s])=\sigma(M_r, r\in R\cap[t,s])$.
It follows that
\[  \E[ M_{s'}(\phi_i) | \sigma(M_r, r\in [t,s])] =  p^{s,M_s}_{s'}(\phi_i).\]
This property holds true almost surely for all $s,s'$ in a countable dense subset of $[t,T]$ containing $T$. It can therefore be extended to all $s,s' \in [t,T]$ when replacing $ \sigma(M_r, r\in [t,s])$ by its right-continuous augmentation $\FR^M_s$.
\pa
Similarly, for all $n\geq 0$ and all $s\in R$, we have
\[ \E[ M^n_s(\phi)]=p^{t,m}_s(\phi). \]
We deduce that $\E[ M_s(\phi)]=p^{t,m}_s(\phi)$ by taking the limit, and the property extends to all $s\in [t,T]$ by right-continuity. This also implies that  $\E[M_t(\phi)]=m(\phi)$. It follows that $\MR(t,m)$ is closed.
This fact, together with Theorem \ref{tension} gives the compactness of $\tilde{\MR}(t,m)$.
\end{proof}

\subsection{$d_1$-variation of belief processes.}

\begin{Lemma}\label{d1var}
Given any partitions $\pi=\{t=t_1<....<t_N=T\}$, we have
\[ \lim_{|\pi|\rightarrow 0} \sup_{M \in \MR(t,m) } \E[\sum_{q=1}^{N-1} (t_{q+1}-t_q) d_1(M_{t_q},\hat{M}_{t_{q}})] =0,     \]
\[ \lim_{|\pi|\rightarrow 0} \sup_{M \in \MR(t,m) } \E[\sum_{q=1}^{N-1} (t_{q+1}-t_q) d_1(M_{t_{q+1}},\hat{M}_{t_{q+1}})] =0,     \]
where we have set $\hat{M}_{t_1}=m$ and for $q=2,...,N$, $\hat{M}_{t_{q}}=p^{t_{q-1},M_{t_{q-1}}}_{t_q}$.
\end{Lemma}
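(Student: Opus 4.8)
The plan is to control, uniformly over $M\in\MR(t,m)$, the ``one-step prediction error'' $d_1(M_{t_q},\hat M_{t_q})$ by a quantity that depends only on the size of the step $t_q-t_{q-1}$, and then to sum these contributions against the partition weights. The key observation is that $\hat M_{t_q}=p^{t_{q-1},M_{t_{q-1}}}_{t_q}$ is, by property $(ii)$ of the definition of $\MR(t,m)$, exactly the conditional expectation (in the weak sense, i.e. tested against $\phi\in C_b(\R^d)$) of $M_{t_q}$ given $\FR^M_{t_{q-1}}$. Hence the Jensen-type inequality \eqref{jensen} from Remark \ref{remjen} gives, with $t_1=t_{q-1}$, $t_2=t_{q-1}$, $t_3=t_q$,
\[
d_1(\hat M_{t_q},M_{t_{q-1}})=d_1(p^{t_{q-1},M_{t_{q-1}}}_{t_q},M_{t_{q-1}})\leq \E[d_1(M_{t_q},M_{t_{q-1}})\mid \FR^M_{t_{q-1}}].
\]
Combining this with the triangle inequality $d_1(M_{t_q},\hat M_{t_q})\leq d_1(M_{t_q},M_{t_{q-1}})+d_1(M_{t_{q-1}},\hat M_{t_q})$ and taking expectations, one obtains
\[
\E[d_1(M_{t_q},\hat M_{t_q})]\leq 2\,\E[d_1(M_{t_q},M_{t_{q-1}})].
\]
So everything reduces to bounding the expected $d_1$-increments $\E[d_1(M_{t_q},M_{t_{q-1}})]$ of the process $M$ itself, uniformly in $M$.

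For this I would use the estimate \eqref{compd} from Lemma \ref{dp}: applied between times $t_{q-1}$ and $t_q$ with the (random) initial laws, it reads, conditionally on $\FR^M_{t_{q-1}}$,
\[
d_1(M_{t_{q-1}}, p^{t_{q-1},M_{t_{q-1}}}_{t_q})\leq 2\sqrt{d(t_q-t_{q-1})},
\]
which already controls $d_1(M_{t_{q-1}},\hat M_{t_q})$ deterministically by $2\sqrt{d\,|\pi|}$. For the genuine increment $d_1(M_{t_q},M_{t_{q-1}})$ one splits as $d_1(M_{t_q},\hat M_{t_q})+d_1(\hat M_{t_q},M_{t_{q-1}})$; the second term is $\le 2\sqrt{d(t_q-t_{q-1})}$, and the first is a martingale-type increment. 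Its expectation is controlled by testing against $1$-Lipschitz functions and using $(ii)$: for a $1$-Lipschitz $\varphi$ one has $\E[\int\varphi\,d(M_{t_q}-\hat M_{t_q})\mid\FR^M_{t_{q-1}}]=0$, so $\E[d_1(M_{t_q},\hat M_{t_q})]$ is the expected fluctuation of a conditionally centered measure-valued increment. One quantifies it via the second moment functional $\psi(x)=|x|^2$: as computed in the proof of Lemma \ref{tension}, $\E[M_{t_q}(\psi)\mid\FR^M_{t_{q-1}}]=M_{t_{q-1}}(\psi)+d(t_q-t_{q-1})$, so $M(\psi)$ is a submartingale whose increments have expectation exactly $d(t_q-t_{q-1})$; together with the uniform second-moment bound $\E[M_s(\psi)]=m(\psi)+d(s-t)$ this yields, after a Cauchy--Schwarz argument comparing $d_1$ to a weighted $L^1$/$L^2$ quantity, a bound of the form $\E[d_1(M_{t_q},M_{t_{q-1}})]\le C_{m,T}\sqrt{t_q-t_{q-1}}\le C_{m,T}\sqrt{|\pi|}$, with a constant depending only on $m(\psi)$, $d$ and $T$ but not on $M$ or on $\pi$.

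Finally I sum:
\[
\sup_{M\in\MR(t,m)}\E\Big[\sum_{q=1}^{N-1}(t_{q+1}-t_q)\,d_1(M_{t_q},\hat M_{t_q})\Big]\le 2\,C_{m,T}\sqrt{|\pi|}\sum_{q=1}^{N-1}(t_{q+1}-t_q)= 2\,C_{m,T}(T-t)\sqrt{|\pi|}\xrightarrow[|\pi|\to 0]{}0,
\]
and the same argument with the index shifted (using $\hat M_{t_{q+1}}=p^{t_q,M_{t_q}}_{t_{q+1}}$ and the increment between $t_q$ and $t_{q+1}$) handles the second displayed limit. The main obstacle I anticipate is making the step ``$\E[d_1(M_{t_q},M_{t_{q-1}})]\lesssim\sqrt{t_q-t_{q-1}}$ uniformly in $M$'' rigorous: $d_1$ is not directly an $L^2$ object, so one must pass through the dual ($1$-Lipschitz test functions) representation and carefully use only the two structural facts available — the weak conditional-expectation identity $(ii)$ and the exact evolution of the second moment — rather than any pathwise regularity of $M$, which is genuinely only càdlàg for $d_1$. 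A clean way to organize this is to note $d_1(M_{t_q},\hat M_{t_q})\le \int|x-a|\,|M_{t_q}-\hat M_{t_q}|(dx)$ for suitable center $a$ and then bound the total mass and the tail via the second moments, but one should double-check whether the paper instead intends a softer argument (e.g. a uniform-integrability/Dunford--Pettis compactness argument over $\MR(t,m)$, which is available thanks to Lemma \ref{tension} and Proposition \ref{Mcompact}); either route closes the gap, and I would present the quantitative $\sqrt{|\pi|}$ version since it is the most self-contained.
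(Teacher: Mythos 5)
There is a genuine gap, and it is exactly at the point you flag as ``the main obstacle'': the claim that $\E[d_1(M_{t_q},M_{t_{q-1}})]\le C_{m,T}\sqrt{t_q-t_{q-1}}$ uniformly over $M\in\MR(t,m)$ is false, and neither the dual (Lipschitz) representation of $d_1$ nor the second-moment identity can rescue it. Elements of $\MR(t,m)$ are measure-valued martingales in the weak sense of $(ii)$, and such processes may jump: take $M_s=p^{t,m}_s$ for $s<s_0$ and $M_s=p^{s_0,\delta_X}_s$ for $s\ge s_0$, where $X$ has law $p^{t,m}_{s_0}$ (the informed player reveals everything at time $s_0$). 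This process belongs to $\MR(t,m)$, yet if $s_0\in(t_{q-1},t_q]$ then $d_1(M_{t_q},\hat{M}_{t_q})$ is of the order of the spread of $m$, independently of $|\pi|$. The identity $\E[M_{t_q}(\psi)\mid\FR^M_{t_{q-1}}]=M_{t_{q-1}}(\psi)+d(t_q-t_{q-1})$ controls only the predictable drift of the second moment, not the size of the martingale fluctuation of the measure itself, which in the example above is of order one. Your preliminary reductions (the factor-$2$ comparison via \eqref{jensen} and the deterministic bound $d_1(M_{t_{q-1}},\hat{M}_{t_q})\le\sqrt{d(t_q-t_{q-1})}$ from Lemma \ref{dp}) are correct but do not address this difficulty, and the ``clean'' bound $d_1(M_{t_q},\hat{M}_{t_q})\le\int|x-a|\,|M_{t_q}-\hat{M}_{t_q}|(dx)$ fails for the same reason: the total variation between $M_{t_q}$ and $\hat{M}_{t_q}$ need not be small.

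What actually makes the lemma true is that although individual increments can be large, the \emph{sum of their squares} is uniformly bounded by martingale orthogonality, so that after Cauchy--Schwarz the weighted sum $\sum_q(t_{q+1}-t_q)\,\E|M_{t_q}(f)-\hat{M}_{t_q}(f)|\le\sqrt{|\pi|T}\,\bigl(\E[\sum_q(M_{t_q}(f)-\hat{M}_{t_q}(f))^2]\bigr)^{1/2}$ is $O(\sqrt{|\pi|})$ for each fixed test function $f$. Two further devices are then indispensable: first, since this orthogonality argument works only for one $f$ at a time while $d_1$ is a supremum over all $1$-Lipschitz $f$, one must replace that supremum by a finite $\varepsilon$-net of the Lipschitz ball (compact for a weighted sup-norm) via a measurable selection; second, one must mollify each $f_i$ into $f_i^\eta$ so that It\^o's formula gives $|M_{t_q}(f_i^\eta)-\hat{M}_{t_{q+1}}(f_i^\eta)|\le C(t_{q+1}-t_q)/\sqrt{\eta}$ and the telescoped square sum stays bounded; the limits are taken in the order $|\pi|\to0$, then $\eta\to0$, then $\varepsilon\to0$. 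The alternative ``softer'' compactness route you mention does not obviously close either: the topology $\kappa_1$ only gives convergence in measure in time, the quantity to be estimated is evaluated along the moving grid $\pi^n$, and Lemma \ref{Hriemann} --- where such a compactness argument would naturally live --- itself relies on Lemma \ref{d1var}.
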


\begin{proof}
We only prove the first assertion as the proof of the second is completely similar.
\pa
Let $\LR^*$ denote the set of $1$-Lipschitz functions on $\R^d$ taking value $0$ at $0$. 
Define the norm $\|f\|_* := \sup_{x \in \R^d} \frac{|f(x)|}{1+|x|^2}$ on the vector space of continuous functions with at most quadratic growth. The space $\LR^*$ is compact for the norm $\|.\|_*$: for all $\varepsilon>0$, there exists a finite family $\{f_i, i=1,....,N_\varepsilon\}$ such that, for all $f \in \LR^*$, there exists $f_i$ such that $\|f-f_i\|_* \leq \varepsilon$.
Let $(U_i)_{i=1,...,N_\varepsilon}$ denote a measurable partition of $\LR^*$ such that 
\[ \forall f \in U_i, \|f-f_i\|_* \leq \varepsilon .\]
Note that for all $f\in \LR^*$, $\E[ M_{t_q}(f) | \FR^{M}_{t_{q-1}} ]= \hat{M}_{t_q}(f)$ for $q=2,...,N-1$ and $\E[M_{t_1}(f)]= \hat{M}_{t_1}(f)$. 
For all $\mu,\mu'\in\PR_1$, we have by definition
\[ d_1(\mu,\mu')= \sup_{f \in \LR^*} \mu(f) - \mu'(f).\]
If we endow $\PR_2$ with the distance $d_2$ (which induces the same Borel structure as $d_1$) and $\LR^*$ with the norm $\|.\|_*$, then,  using point 4 of Lemma \ref{dro}, one easily checks that the map
\[ (\mu,\mu',f) \in \PR_2\times \PR_2 \times \LR^* \rightarrow  \mu(f) - \mu'(f),\]
is jointly continuous. Applying Proposition 7.33 in Bertsekas-Shreves \cite{BS}, there exists a measurable selection $\Phi:  \PR_2\times \PR_2 \rightarrow \LR^*$ such that
\[ \forall \mu,\mu' \in \PR_2, \; d_1(\mu,\mu')= \int \Phi(\mu,\mu') d\mu - \int \Phi(\mu,\mu') d\mu' .\]
\pa
Define the random functions $x\mapsto g_q(x)= \Phi(M_{t_q},\hat{M}_{t_{q}})(x)\ind_{(M_{t_q},\hat{M}_{t_{q}}) \in \PR_2\times \PR_2}$ so that
\[ \E[\sum_{q=1}^{N-1} (t_{q+1}-t_q) d_1(M_{t_q},\hat{M}_{t_{q}})]=\E[\sum_{q=1}^{N-1} (t_{q+1}-t_q)( M_{t_q}(g_q)-\hat{M}_{t_{q}}(g_q))].\]
We have, for all $q$,
\[ M_{t_q}(g_q)= M_{t_q}(\sum_i \ind_{g_q\in U_i} f_i) + M_{t_q}(\sum_i \ind_{g_q\in U_i} (g_q-f_i)) .\]
Using that, for $\P$-almost all $\omega\in\Omega$, 
\[ \forall x \in \R^d, \; \sum_i \ind_{g_q\in U_i} |g_q(\omega,x)-f_i(x)| \leq \varepsilon (1+|x|^2) ,\]
we deduce that
\[ |  M_{t_q}(g_q)- M_{t_q}(\sum_i \ind_{g_q\in U_i} f_i)| \leq \varepsilon \int (1+|x|^2) dM_{t_q}(x) .\]
Finally, this implies that
\[ \E[ |  M_{t_q}(g_q)- M_{t_q}(\sum_i \ind_{g_q\in U_i} f_i)|] \leq \varepsilon \int (1+|x|^2) dp^{t,m}_{t_q}(x) \leq \varepsilon(1+C_1),\]
for some constant $C_1$ depending on $m$ and $(T-t)$.
The same argument leads to
 \[ \E[ |  \hat{M}_{t_q}(g_q)- \hat{M}_{t_q}(\sum_i \ind_{g_q\in U_i} f_i)|] \leq \varepsilon \int (1+|x|^2) dp^{t,m}_{t_q}(x) \leq \varepsilon(1+C_1).\]
We obtain:
\begin{equation}
\begin{array}{l}
\label{d11}
\E[\sum_{q=1}^{N-1} (t_{q+1}-t_q)( M_{t_q}(g_q)-\hat{M}_{t_{q}}(g_q))]\\
 \qquad\qquad\qquad\leq 2T\varepsilon(1+C_1) + \E[\sum_{q=1}^{N-1} (t_{q+1}-t_q) \sum_{i=1}^{N_\varepsilon}\ind_{g_q \in U_i}( M_{t_q}(f_i)-\hat{M}_{t_{q}}(f_i))]\\
\qquad\qquad\qquad\leq 
2T\varepsilon(1+C_1) +  \sum_{i=1}^{N_\varepsilon} \E[\sum_{q=1}^{N-1} (t_{q+1}-t_q) | M_{t_q}(f_i)-\hat{M}_{t_{q}}(f_i)|]
\end{array}
\end{equation}
Note that the second inequality is far from being precise, since we just have bounded the indicator functions by $1$. Its advantage is that the integrands are now all deterministic.
\pa
Now, for $\eta>0$ and $i=1,...,N_\varepsilon$, we define the mollification $f_i^\eta = f_i * \rho_\eta$ (recall that $\rho_\eta$ denotes the Gaussian kernel, see section 2.2).  Using that $f_i$ is $1$-Lipschitz, it is then well-known that $f_i^\eta$ is smooth and that
\[\|f_i-f_i^\eta\|_\infty \leq C_2 \sqrt{\eta}, \; \|\nabla f_i^\eta \|_\infty\leq 1, \; \|D^2 f_i^\eta \|_\infty \leq \frac{C_2}{\sqrt{\eta}},  \]
with  $C_2 =\int_{\R^d} |x| \rho_1(x)dx$. We deduce that, for all $i=1,...,N_\varepsilon$:
\begin{equation}
\label{D12}
\begin{array}{rl}
\E[\sum_{q=1}^{N-1} (t_{q+1}-t_q)| M_{t_q}(f_i)-\hat{M}_{t_{q}}(f_i)|] & \leq \E[\sum_{q=1}^{N-1} (t_{q+1}-t_q)| M_{t_q}(f_i^\eta)-\hat{M}_{t_{q}}(f_i^\eta)|]+2C_2T\sqrt{\eta}
\end{array}
\end{equation}
Now, by Cauchy-Schwarz, we have 
\begin{equation}
\label{D13}
\begin{array}{rl}
\E[\sum_{q=1}^{N-1} (t_{q+1}-t_q)| M_{t_q}(f_i^\eta)-\hat{M}_{t_{q}}(f_i^\eta)|] & \leq \sqrt{|\pi|} \E[\sum_{q=1}^{N-1} \sqrt{t_{q+1}-t_q}| M_{t_q}(f_i^\eta)-\hat{M}_{t_{q}}(f_i^\eta)|]\\
&\leq \sqrt{|\pi|}\sqrt{T}  \left( \E \left[\sum_{q=1}^{N-1} \Big( M_{t_q}(f_i^\eta)-\hat{M}_{t_{q}}(f_i^\eta)\Big)^2 \right]\right)^{1/2} .
\end{array}
\end{equation}
Then, using that $(M_{t_q}(f_i^\eta)-\hat{M}_{t_{q}}(f_i^\eta))$ is a sequence of martingale increments, we have
\begin{align*}
 \E\left[\sum_{q=1}^{N-1} \Big( M_{t_q}(f_i^\eta)-\hat{M}_{t_{q}}(f_i^\eta) \Big)^2 \right] &=\E \left[ \left( \sum_{q=1}^{N-1}  \Big(M_{t_q}(f_i^\eta)-\hat{M}_{t_{q}}(f_i^\eta) \Big) \right)^2 \right]\\
 &=\E\left[ \left(M_{t_{N-1}}(f_i^\eta)-m(f_i^\eta)+ \sum_{q=1}^{N-2}  \Big(M_{t_q}(f_i^\eta)-\hat{M}_{t_{q+1}}(f_i^\eta) \Big) \right)^2 \right]
 \end{align*}
Therefore it holds that
\begin{align*}
\left( \E\left[\sum_{q=1}^{N-1} \Big( M_{t_q}(f_i^\eta)-\hat{M}_{t_{q}}(f_i^\eta) \Big)^2 \right]\right)^{1/2} \leq& m(f_i^\eta) + \E\left[ \Big(M_{t_{N-1}}(f_i^\eta)\Big)^2\right]^{1/2} \\
&+  \left( \E \left[ \left(\sum_{q=1}^{N-2} \Big( M_{t_q}(f_i^\eta)-\hat{M}_{t_{q+1}}(f_i^\eta) \Big) \right)^2 \right]\right)^{1/2}.
\end{align*}
Since $f_i^\eta$ is smooth, using It\^o's formula, for all $\mu \in \PR_2$ and all $t\leq s_1 \leq s_2 \leq T$, we have:
\[ p^{s_1,\mu}_{s_2}(f_i^{\eta})=\E_{s_1,\mu}[ f_i^{\eta}(B_{s_2}) ] = \E_{s_1,\mu}\left[ f_i^{\eta}(B_{s_1})+ \int_{s_1}^{s_2}  \nabla f_i^{\eta}(B_{r}) \cdot dB_r +\frac{1}{2} \int_{s_1}^{s_2} \tr(D^2f_i^{\eta}(B_r))dr \right].\]
Since $\| \nabla f_i^{\eta} \|_\infty \leq 1$, the stochastic integral is a martingale, and thus 
\[  | p^{s_1,\mu}_{s_2}(f_i^{\eta})- \mu(f_i^\eta) | =  \left| \E_{s_1,\mu}\left[\frac{1}{2} \int_{s_1}^{s_2} \tr(D^2f_i^{\eta}(B_r))dr \right] \right| \leq \frac{dC_2}{2\sqrt{\eta}} (s_2-s_1).\]
This formula holds in particular for $\mu=M_{t_q}, q=1,...,N-2$:
\[ |M_{t_q}(f_i^\eta)-\hat{M}_{t_{q+1}}(f_i^\eta)|= | M_{t_q}(f_i^\eta) -p^{t_q,M_{t_q}}_{t_{q+1}}(f_i^\eta)| \leq \frac{dC_2}{2\sqrt{\eta}} (t_{q+1}-t_q).\] 
This leads to the following estimation:
\begin{align*}
\left( \E\left[ \left(\sum_{q=1}^{N-2}  M_{t_q}(f_i^\eta)-\hat{M}_{t_{q+1}}(f_i^\eta)\right)^2 \right]\right)^{1/2}
 \leq  \frac{dC_2T}{2\sqrt{\eta}}.
\end{align*}
On the other hand, Jensen's inequality implies
\[ M_{t_{N-1}}(f_i)^2 \leq M_{t_{N-1}}(f_i^2),\]
which in turn implies (with $\psi(x)=|x|^2$, recall that $f_i(0)=0$ so that $|f_i|^2\leq \psi$))
\[ \E[M_{t_{N-1}}(f_i^\eta)^2]^{1/2} \leq  C_2\sqrt{\eta}+ \E[M_{t_{N-1}}(f_i^2)]^{1/2} \leq C_2\sqrt{\eta}+\E[ M_{t_{N-1}}(\psi)]^{1/2}= C_2\sqrt{\eta}+\sqrt{m(\psi)+(T-t)}.\]
Summing up, we proved that
\begin{equation}
\label{d14}
 \left( \E\left[\sum_{q=1}^{N-1} \Big( M_{t_q}(f_i)-\hat{M}_{t_{q}}(f_i) \Big)^2 \right]\right)^{1/2}\leq  m(f_i) +\sqrt{m(\psi)+(T-t)} +\frac{d C_2 T}{2\sqrt{\eta}}+4C_2\sqrt{\eta} .
 \end{equation}
Resuming \eqref{d11}-\eqref{d14}, we get finally
\begin{align*}
\E&\left[\sum_{q=1}^{N-1} (t_{q+1}-t_q)( M_{t_q}(g_q)-\hat{M}_{t_{q}}(g_q))\right]\\
 &\leq \;2T\varepsilon(1+C_1)
 + N_\varepsilon \sqrt{|\pi|T}\left( |m|_2 +\sqrt{m(\psi)+(T-t)} +\frac{d C_2T}{2\sqrt{\eta}}+4C_2\sqrt{\eta}\right)  +2N_\varepsilon C_2 T\sqrt{\eta}.
\end{align*} 
This implies that for all $\varepsilon>0$ and $\eta>0$,
\[ \limsup_{|\pi| \rightarrow 0}  \sup_{M \in \MR(t,m) }  \E\left[\sum_{q=1}^{N-1} (t_{q+1}-t_q)d_1(M_{t_q},\hat{M}_{t_{q}})\right]\leq 2T\varepsilon(1+C_1)+2N_\varepsilon C_2 T\sqrt{\eta} ,\]
and the result follows by sending $\eta$ and then $\varepsilon$ to zero.
\end{proof}
\pa
We now state a corollary of the previous Lemma.

\begin{Lemma}\label{Hriemann}
Let $M^n$ be a $\LR(\kappa_1)$-convergent sequence in $\MR(t,m)$ with limit $M$. Then for any sequence of partitions $\pi^n=\{t=t^n_1<...<t^n_{N_n}=T\}$ with $|\pi^n| \rightarrow 0$, we have
\[ \E\left[\sum_{q=1}^{N_n-1} (t^n_{q+1}-t^n_q) H(t^n_q,M^n_{t^n_q})\right] \rightarrow \E\left[\int_t^T H(s,M_s)ds\right].\] 
\end{Lemma}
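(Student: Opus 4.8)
The goal is to show that the Riemann-type sums of $H$ along the partitions $\pi^n$, evaluated at $M^n$, converge to $\E[\int_t^T H(s,M_s)\,ds]$. The natural decomposition is into three pieces:
\begin{equation*}
\E\left[\sum_q (t^n_{q+1}-t^n_q) H(t^n_q,M^n_{t^n_q})\right] - \E\left[\int_t^T H(s,M_s)\,ds\right] = A_n + B_n + C_n,
\end{equation*}
where (writing $\Delta^n_q=(t^n_{q+1}-t^n_q)$)
\begin{equation*}
A_n = \E\left[\sum_q \Delta^n_q\Big(H(t^n_q,M^n_{t^n_q}) - H(t^n_q, \hat M^n_{t^n_q})\Big)\right],
\end{equation*}
$B_n$ is the difference between the deterministic-type sum $\E[\sum_q \Delta^n_q H(t^n_q,\hat M^n_{t^n_q})]$ and $\E[\sum_q\int_{t^n_q}^{t^n_{q+1}} H(s,M^n_s)\,ds]$, i.e.\ the error in approximating a time-integral by left-endpoint values together with replacing $\hat M^n$ by $M^n$, and $C_n = \E[\int_t^T H(s,M^n_s)\,ds] - \E[\int_t^T H(s,M_s)\,ds]$.

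\textbf{Term $A_n$.} By Lemma~\ref{HLipsch}, $H$ is Lipschitz in $m$ for $d_1$ with constant $C$, so $|A_n| \le C\,\E[\sum_q \Delta^n_q\, d_1(M^n_{t^n_q},\hat M^n_{t^n_q})]$, and this is bounded by $C\sup_{M\in\MR(t,m)}\E[\sum_q \Delta^n_q\, d_1(M_{t^n_q},\hat M_{t^n_q})]$, which tends to $0$ as $|\pi^n|\to 0$ by the first assertion of Lemma~\ref{d1var}. (Note this bound is uniform in the choice of process, which is exactly why Lemma~\ref{d1var} was stated with a supremum.)

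\textbf{Term $B_n$.} Write $B_n = \E[\sum_q \int_{t^n_q}^{t^n_{q+1}} (H(t^n_q,\hat M^n_{t^n_q}) - H(s,M^n_s))\,ds]$. Split $H(t^n_q,\hat M^n_{t^n_q}) - H(s,M^n_s) = \big(H(t^n_q,\hat M^n_{t^n_q}) - H(t^n_q,M^n_s)\big) + \big(H(t^n_q,M^n_s) - H(s,M^n_s)\big)$. The second piece is bounded in absolute value by $C|s-t^n_q|\le C|\pi^n|$ by Lipschitz continuity of $H$ in $t$, contributing at most $C(T-t)|\pi^n|\to 0$. For the first piece, use Lipschitz continuity in $m$: it is bounded by $C\,d_1(\hat M^n_{t^n_q},M^n_s)\le C\big(d_1(\hat M^n_{t^n_q},M^n_{t^n_q}) + d_1(M^n_{t^n_q},M^n_s)\big)$; wait — the cleaner route is to bound $d_1(\hat M^n_{t^n_q},M^n_s)$ directly. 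Actually the most economical handling: since $\hat M^n_{t^n_q}=p^{t^n_{q-1},M^n_{t^n_{q-1}}}_{t^n_q}$ it is more convenient to bound via the already-established second assertion of Lemma~\ref{d1var} together with a modulus-of-continuity argument in $s$; I would instead reorganize $B_n$ so that the sum $\E[\sum_q \Delta^n_q\, d_1(M^n_{t^n_{q+1}},\hat M^n_{t^n_{q+1}})]$ appears (controlled by the second assertion of Lemma~\ref{d1var}), plus an oscillation term $\E[\sum_q \int_{t^n_q}^{t^n_{q+1}} d_1(M^n_s, M^n_{t^n_q})\,ds]$. This oscillation term needs care and is discussed below.

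\textbf{Term $C_n$.} This is where the $\LR(\kappa_1)$-convergence $M^n\to M$ is used. By Skorokhod's representation theorem (as in the proof of Proposition~\ref{Mcompact}) we may realize $M^n\to M$ almost surely for $\kappa_1$ on a common probability space; up to a subsequence, $M^n_s\to M_s$ in $d_1$ for almost every $s\in[t,T]$, almost surely. Since $H$ is bounded and continuous in $(s,m)$ for $d_1$, $H(s,M^n_s)\to H(s,M_s)$ for a.e.\ $s$, a.s., and dominated convergence (in $s$ and $\omega$, using $\|H\|_\infty\le C$) gives $C_n\to 0$ along that subsequence. A standard subsequence argument then upgrades this to convergence of the full sequence. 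The $\kappa_1$ topology was designed precisely to make this step work, so I expect it to be routine given the earlier compactness machinery.

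\textbf{Main obstacle.} The delicate point is the oscillation term in $B_n$, namely controlling $\E[\sum_q \int_{t^n_q}^{t^n_{q+1}} d_1(M^n_s, M^n_{t^n_q})\,ds]$ uniformly as $|\pi^n|\to 0$ — the $\kappa_1$ topology only controls convergence \emph{in measure} in time, so the paths of $M^n$ need not be equicontinuous and a left-endpoint Riemann approximation of $\int H(s,M^n_s)\,ds$ is not obviously close to $\sum_q \Delta^n_q H(t^n_q,M^n_{t^n_q})$ pathwise. The resolution is not to compare with the left-endpoint sum of $M^n$ itself but to compare with $\hat M^n$ (whose increments are genuinely small by the submartingale/heat-equation estimates already used in Lemma~\ref{d1var}), i.e.\ to replace $M^n_{t^n_q}$ by $\hat M^n_{t^n_q}=p^{t^n_{q-1},M^n_{t^n_{q-1}}}_{t^n_q}$ and exploit that the map $(s,m)\mapsto p^{t^n_{q-1},m}_s$ has small $d_1$-oscillation on $[t^n_{q-1},t^n_q]$ (of order $\sqrt{|\pi^n|}$ by Lemma~\ref{dp}). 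Together with the two assertions of Lemma~\ref{d1var}, this funnels all the genuinely hard estimates back into results already proved, and the three-term decomposition then closes.
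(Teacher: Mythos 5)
Your overall architecture (three-term decomposition, Lemma \ref{d1var} for the endpoint increments, $\kappa_1$-continuity of $\mu\mapsto\E_\mu[\int_t^T H(s,m_s)\,ds]$ for the last term) is the same as the paper's, and the terms $A_n$ and $C_n$ are handled correctly. The gap is exactly where you locate the ``main obstacle'': the oscillation term $\E[\sum_q\int_{t^n_q}^{t^n_{q+1}} d_1(M^n_s,M^n_{t^n_q})\,ds]$, which involves the values of $M^n$ at times $s$ strictly inside a partition interval. Your proposed resolution --- replace $M^n_{t^n_q}$ by $\hat M^n_{t^n_q}$ and exploit that $(s,m)\mapsto p^{t^n_{q-1},m}_s$ has $d_1$-oscillation of order $\sqrt{|\pi^n|}$ --- only controls the oscillation of the predicted law $\hat M^n$, not of the process $M^n$ itself: for $s\in(t^n_q,t^n_{q+1})$ the random measure $M^n_s$ is \emph{not} equal to $p^{t^n_q,M^n_{t^n_q}}_s$ (an element of $\MR(t,m)$ is only required to satisfy the conditional-expectation identity $(ii)$, and its paths may oscillate arbitrarily between grid points --- this is precisely why $\kappa_1$ gives only convergence in measure in time). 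So as written the interior values remain uncontrolled and the argument does not close.

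The missing idea is the Jensen-type inequality \eqref{jensen} of Remark \ref{remjen}. One first pushes $M^n_s$ forward to the right endpoint of the interval: by the coupling argument of Lemma \ref{dp}, $d_1(M^n_s,M^n_{t^n_q})\le d_1(p^{s,M^n_s}_{t^n_{q+1}},M^n_{t^n_q})+\sqrt{d(t^n_{q+1}-s)}$. Since $p^{s,M^n_s}_{t^n_{q+1}}$ is the conditional expectation of $M^n_{t^n_{q+1}}$ given $\FR^{M^n}_s$ and $\nu\mapsto d_1(\mu,\nu)$ is convex, \eqref{jensen} yields $\E[d_1(p^{s,M^n_s}_{t^n_{q+1}},M^n_{t^n_q})]\le\E[d_1(M^n_{t^n_{q+1}},M^n_{t^n_q})]\le\E[d_1(M^n_{t^n_{q+1}},\hat M^n_{t^n_{q+1}})]+\sqrt{d(t^n_{q+1}-t^n_q)}$. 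This converts the interior oscillation into the endpoint increment controlled by the second assertion of Lemma \ref{d1var}, plus terms of order $\sqrt{|\pi^n|}$. With this substitution your decomposition closes; without it, the key estimate is asserted but not proved.
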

\begin{proof}
At first note that $\P \rightarrow \E_{\P}[\int_t^T H(s,M_s)ds]$ is $\LR(\kappa_1)$-continuous by construction since $x(s) \in \D([t,T],\PR_1)\rightarrow \int_t^T H(s,x(s)) ds$ is $\kappa_1$-continuous. It follows that: 
\begin{equation}
 \label{convloi} 
 \E\left[\int_t^T H(s,M^n_s)ds \right]\rightarrow \E\left[\int_t^T H(s,M_s)ds\right].
 \end{equation}
On the other hand, since, by Lemma \ref{HLipsch}, $H$ is $C$-Lipschitz, we have,  for all $n$ and all $q$, 
\begin{equation}
\label{HnH}
\bigg|(t^n_{q+1}-t^n_q) H(t^n_q,M^n_{t^n_q}) -\int_{t^n_q}^{t^n_{q+1}} H(s,M^n_s)ds \bigg|\leq 
C  \int_{t^n_q}^{t^n_{q+1}} ((s-t^n_q ) + d_1(M^n_s,M^n_{t^n_q} ) )ds 
%\\& \leq C  \int_{t^n_q}^{t^n_{q+1}} \left((s-t^n_q ) + d_1(p^{s,M^n_s}_{t^n_{q+1}},M^n_{t^n_q} ) +  \sqrt{d}\sqrt{t^n_{q+1}-s} \right)ds.
\end{equation}
Using that, by Lemma \ref{dp},  it holds for all $s\in[t_q,t_{q+1}]$
\[
 d_1(M^n_s,M^n_{t^n_q})\leq d_1(p^{s,M^n_s}_{t^n_{q+1}},M^n_{t^n_q})+\sqrt{d(t^n_{q+1}-s)},\]
we get
\[\begin{array}{l}
\bigg|(t^n_{q+1}-t^n_q) H(t^n_q,M^n_{t^n_q}) -\int_{t^n_q}^{t^n_{q+1}} H(s,M^n_s)ds \bigg|\\
\qquad\qquad\qquad\qquad\leq  C\left(\int_{t^n_q}^{t^n_{q+1}}d_1(p_{t^n_{q+1}}^{s,M^n_s},M^n_{t^n_q})ds+(t^n_{q+1}-t^n_q)(|\pi^n|+\sqrt{d|\pi^n|}\right).
 \end{array}
 \]
Now remark that for all $\mu \in \PR_1$, the map $\nu \in \PR_1 \rightarrow d_1(\mu,\nu)$ is convex. Therefore, taking conditional expectations given $\FR^{M^n}_s$  and using the notation $\hat M^n_{t^n_{q+1}}=p^{t_q,M^n_{t_q}}_{t_{q+1}}$, it follows from {\blu Remark \ref{remjen} 5.} that:
\begin{align*}
\E[d_1(p^{s,M^n_s}_{t^n_{q+1}},M^n_{t^n_q})] \leq & \E[  d_1(M^n_{t^n_{q+1}},M^n_{t^n_q} )]\\
\leq & \E[d_1(M^n_{t^n_{q+1}},\hat M^n_{t^n_{q+1}})]+\sqrt{d(t^n_{q+1}-t^n_q)}.
\end{align*} 
We deduce that:
\[
\E[|(t^n_{q+1}-t^n_q) H(t^n_q,M^n_{t^n_q}) - \int_{t^n_q}^{t^n_{q+1}} H(s,M^n_s)ds |]
\leq C (t^n_{q+1}-t^n_q)\left( |\pi^n|+\E[d_1(M^n_{t^n_{q+1}},\hat M^n_{t^n_{q+1}})]+2\sqrt{d|\pi^n|}\right)\]
and finally
\begin{align*}
|\E[\sum_{q=1}^{N_n-1} (t^n_{q+1}-t^n_q) H(t^n_q,M^n_{t^n_q})]-\E[\int_t^T &H(s,M_s)ds]| 
\leq \left|\E\int_t^TH(s,M^n_s)ds-\E[\int_t^TH(s,M_s)ds\right|\\
&+CT \left(\E\sum_{q=1}^{N_n-1}(t^n_{q+1}-t^n_q)d_1(M^n_{t^n_{q+1}},\hat M^n_{t^n_{q+1}})]+|\pi^n|+2\sqrt{d|\pi^n|}\right).
\end{align*}
The result follows by \eqref{convloi} and Lemma \ref{d1var}.
\end{proof}

\subsection{Proof of the alternative formulation}

\pa We are now ready to establish Theorem \ref{WV} : $\lim_{|\pi|\rightarrow 0}V^\pi$ exists and is equal to the value of the martingale-optimization problem $V$, as a consequence of the two propositions \ref{limsup} and \ref{liminf} below.

\begin{Proposition}
\label{limsup} 
\begin{equation}\label{ineqlimsup}
\limsup_{|\pi| \rightarrow 0} V_{\pi}(t,m) \leq V(t,m).
\end{equation}
\end{Proposition}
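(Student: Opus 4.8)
The plan is to show that, for every partition $\pi$ and every $\varepsilon>0$, player $1$ has a behaviour strategy $\sigma\in\Sigma(\pi)$ that guarantees a payoff close to $\E[\int_t^T H(s,M_s)\,ds]$ for a nearly optimal $M\in\MR(t,m)$, so that $V_\pi(t,m)$ is bounded by this quantity up to an error tending to $0$ with $|\pi|$. First I would fix $\varepsilon>0$ and choose $M\in\MR(t,m)$ with $\E[\int_t^T H(s,M_s)\,ds]\le V(t,m)+\varepsilon$. Using the Isaacs equality \eqref{isaacs} together with a measurable selection argument --- the map $(t,m',u)\mapsto \sup_{v\in V}\int_{\R^d}f(t,x,u,v)\,dm'(x)$ is continuous in all variables by compactness of $V$ and continuity of $f$ --- I would fix a measurable map $(t,m')\mapsto \bar u(t,m')\in U$ such that $\sup_{v\in V}\int_{\R^d}f(t,x,\bar u(t,m'),v)\,dm'(x)\le H(t,m')+\varepsilon$ for all $(t,m')$.

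Next, given $\pi=\{t=t_1<\dots<t_N=T\}$, I would invoke the generalized splitting Proposition \ref{splitting}: on an enlarged space carrying the Brownian motion $(B_s)$ with initial law $m$ and independent uniforms $(U_1,\dots,U_{N-1})$, there exist variables $(M'_{t_q})_{q}$ with the same law as $(M_{t_q})_q$ such that $M'_{t_q}$ is $\sigma(B_{t_1},U_1,\dots,B_{t_q},U_q)$-measurable and the conditional law of $B_{t_q}$ given $(M'_{t_1},\dots,M'_{t_q})$ is exactly $M'_{t_q}$. Player $1$'s strategy $\sigma$ is then to implement this construction with behaviour randomization and to play $u_q=\bar u(t_q,M'_{t_q})$ at stage $q$; since $u_q$ is a randomized measurable function of player $1$'s observations $B_{t_1},\dots,B_{t_q}$, this defines a legitimate element of $\Sigma(\pi)$. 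To bound $\gamma_\pi(t,m,\sigma,\tau)$ for an arbitrary $\tau\in\TR(\pi)$, I would observe that player $2$'s action $v_q$ is a function of $(u_1,v_1,\dots,u_{q-1},v_{q-1})$ and of a randomization independent of $(B,U_\cdot)$, hence $v_q$ is measurable with respect to $\GR_q:=\sigma(M'_{t_1},\dots,M'_{t_q})$ augmented by that independent randomization, while $u_q=\bar u(t_q,M'_{t_q})$ is $\sigma(M'_{t_q})$-measurable, and the conditional law of $B_{t_q}$ given $\GR_q$ is still $M'_{t_q}$. Therefore
\[
\E\big[f(t_q,B_{t_q},u_q,v_q)\,\big|\,\GR_q\big]=\int_{\R^d}f(t_q,x,\bar u(t_q,M'_{t_q}),v_q)\,dM'_{t_q}(x)\le H(t_q,M'_{t_q})+\varepsilon,
\]
and taking expectations, using that $M'_{t_q}$ has the same law as $M_{t_q}$,
\[
\gamma_\pi(t,m,\sigma,\tau)\le \E\Big[\sum_{q=1}^{N-1}(t_{q+1}-t_q)H(t_q,M_{t_q})\Big]+\varepsilon(T-t).
\]
Since $\tau$ is arbitrary, this yields $V_\pi(t,m)\le \E[\sum_{q=1}^{N-1}(t_{q+1}-t_q)H(t_q,M_{t_q})]+\varepsilon(T-t)$.

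Finally, I would apply Lemma \ref{Hriemann} to the constant sequence $M^n\equiv M$, which gives $\E[\sum_{q}(t^n_{q+1}-t^n_q)H(t^n_q,M_{t^n_q})]\to \E[\int_t^T H(s,M_s)\,ds]$ along any sequence of partitions with $|\pi^n|\to 0$. Combining, $\limsup_{|\pi|\to 0}V_\pi(t,m)\le \E[\int_t^T H(s,M_s)\,ds]+\varepsilon(T-t)\le V(t,m)+\varepsilon(1+T)$, and letting $\varepsilon\to 0$ gives \eqref{ineqlimsup}. The delicate point is the information-structure step showing that player $1$'s announced actions leak no more information than the belief $M'_{t_q}$ --- which is exactly what the splitting Proposition \ref{splitting} was designed to produce; the passage from the discrete Riemann-type sum to the integral is then a direct consequence of the $d_1$-variation estimates already isolated in Lemmas \ref{d1var} and \ref{Hriemann}.
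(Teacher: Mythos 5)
Your proposal is correct and follows essentially the same route as the paper: an (approximately) optimal $M$, the splitting Proposition \ref{splitting} to realize the beliefs $M'_{t_q}$ as functions of player 1's observations, a measurable selection of a (near-)minimizer of $\sup_v\int f\,dm'$, the conditioning argument showing that the announced actions reveal no more than $M'_{t_q}$, and Lemma \ref{Hriemann} to pass from the Riemann sum to the integral. The only cosmetic differences are that you use an $\varepsilon$-optimal selection where the paper selects an exact argmin, and you fix an $\varepsilon$-optimal $M$ up front rather than taking an arbitrary $M$ and infimizing at the end.
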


\begin{proof} We denote by $(\bar\Omega,\bar\FR,\bar\P)$ an extension of the canonical Wiener space $(\Omega_t,\FR_t,\P_{t,m})$ which supports also a family $(U_q,V_q)_{q=1,...,N-1}$ of independent random variables with uniform law on $[0,1]$, independent from $(B_s)_{s \in [t,T]}$. \\
Consider an arbitrary element $M$ in $\MR(t,m)$ and a partition $\pi=\{ t=t_1<...<t_N=T\}$. Using the splitting proposition \ref{splitting}, we can define some sequence of measure valued random variables on $\bar\Omega$, $(M'_{t_q})_{q=1,...,N-1}$, with same law as $(M_{t_q})_{q=1,...,N-1}$, having the following properties 
\begin{itemize}
\item[$a)$] $M'_{t_q}$ is measurable with respect to $(B_{t_1},U_1,...,B_{t_{q}},U_{q})$.
\item[$b)$] The conditional law of $B_{t_q}$ given $(M'_{t_1},...,M'_{t_q})$ is precisely $M'_{t_q}$. 
\end{itemize}
Let $(s,m)\mapsto u^*(s,m)$ be a measurable selection of $\mbox{Argmin}_{u\in U}\max_{v\in V}\int_{\R^d}f(s,x,u,v)dm(x)$ (which exists by Proposition 7.33 in \cite{BS}). Note that we have for all $(s,m)\in [t,T]\times \PR_2$:
\be \label{ineqselec} \forall v\in V, \; \int_{\R^d} f(s,x,u^*(s,m),v)dm(x) \leq H(s,m).\ee 
With these ingredients, we shall compose a strategy $\sigma^*$ for player 1: at each step $q$, the action of player $1$ is given by
\[ u'_q=u^*(t_q,M'_{t_q}).\]
Thanks to property $a)$, this definition induces a behavior strategy $\sigma^*\in \Sigma(\pi)$ that does not depend on player $2$'s actions, where $\sigma^*_q(B_{t_1},\ldots,B_{t_q},u_1,\ldots,u_{q-1})$ is simply a version of the conditional law of $u'_q$ given $(B_{t_1},\ldots,B_{t_q},u'_1,\ldots,u'_{q-1})$.
\pa
Let player 2 chose some arbitrary strategy $\tau\in\TR(\pi)$. 
Without loss of generality, we can compute the payoff associated to the strategies $\sigma^*$ and $\tau$ on the probability space $\bar \Omega$. Precisely, using the notations of Theorem \ref{dubinsapp}, we define the actions of player 2 by:
\[ v'_q=\Phi_{V} (\tau_q(u'_1,v'_1,...,u'_{q-1},v'_{q-1}), V_q), \]
so that the joint law of $(B_{t_q},u'_q,v'_q)_{q=1,...,N-1}$ defined on $(\bar\Omega,\bar\FR,\bar\P)$ is the same as the law of $(B_{t_q},u_q,v_q)_{q=1,...,N-1}$ under $\P_{t,m,\pi,\sigma^*,\tau}$. 
\pa
Thanks to property $b)$ the conditional law of $B_{t_q}$ given $(M'_{t_i},u'_i,v'_i)_{i=1,...,q}$ equals the conditional law of $B_{t_q}$ given $(M'_{t_1},...,M'_{t_q})$ and thus is exactly  $M'_{t_q}$. To prove this, we use first that the actions of player 2 are maps depending on $(u'_i)_{i=1,...,q-1}$ and auxiliary variables $(V_i)_{i=1,..,q}$ that are independent of $(B_{t_q}, (M'_{t_i},u'_i)_{i=1,..q})$, implying that the variables $(v'_i)_{i=1,...,q}$ can be removed from the conditioning. Then, the variables $(u'_i)_{i=1,...,q}$ can be removed as well since they are maps depending on  $(M'_{t_i})_{i=1,...,q}$. 
\pa
Using the above-mentioned properties and inequality \ref{ineqselec}, we obtain:
\begin{align*}
\gamma_\pi(t,m,\sigma^*,\tau)=&\bar\E\left[ \sum_{q=1}^{N-1}(t_{q+1}-t_q)f(t_q,B_{t_q},u'_q,v'_q)\right]\\
&= \bar\E\left[ \sum_{q=1}^{N-1}(t_{q+1}-t_q) \bar\E[f(t_q,B_{t_q},u'_q,v'_q) |(M'_{t_i},u'_i,v'_i)_{i=1,...,q} ] \right] \\
&= \bar\E\left[ \sum_{q=1}^{N-1}(t_{q+1}-t_q)  \int_{\R^d } f(t_q,x,u'_q,v'_q) dM'_{t_q}(x) \right] \\
&\leq 
\bar\E\left[ \sum_{q=1}^{N-1} (t_{q+1}-t_q)H(t_q,M'_{t_q})  \right]=\E\left[ \sum_{q=1}^{N-1}  (t_{q+1}-t_q)  H(t_q,M_{t_q})\right].
\end{align*}
Since the strategy $\tau$ was chosen arbitrarily, it follows that
\[ V_\pi(t,m)\leq \bar\E \left[ \sum_{q=1}^{N-1} (t_{q+1}-t_q)H(t_q,M_{t_q})  \right].\]
Letting tend $|\pi|$ to zero on both sides of the latter inequality and using Lemma \ref{Hriemann}, the relation \eqref{ineqlimsup} follows
\end{proof}

\begin{Proposition}\label{liminf} 
\begin{equation}\label{ineqliminf}
\liminf_{|\pi| \rightarrow 0} V_{\pi}(t,m)\geq V(t,m).
\end{equation}
\end{Proposition}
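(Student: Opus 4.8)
The plan is to use the minmax identity $V_{\pi}(t,m)=\inf_{\sigma\in\Sigma(\pi)}\sup_{\tau\in\TR(\pi)}\gamma_\pi(t,m,\sigma,\tau)$: I will show that for \emph{every} $\sigma\in\Sigma(\pi)$ player~$2$ has a reply $\tau$ with $\gamma_\pi(t,m,\sigma,\tau)\geq V(t,m)-\varepsilon_\pi$, where $\varepsilon_\pi\to 0$ as $|\pi|\to 0$ \emph{uniformly in} $\sigma$. Fix $\sigma$ and $\pi=\{t=t_1<\dots<t_N=T\}$, and let $v^*:[0,T]\times\PR_2\to V$ be a measurable selection of $\mbox{Argmax}_{v\in V}\inf_{u\in U}\int_{\R^d}f(s,x,u,v)dm(x)$ (it exists by Proposition~7.33 in \cite{BS}, as for $u^*$ in the proof of Proposition~\ref{limsup}); by Isaacs' assumption, $\int_{\R^d}f(s,x,u,v^*(s,m))dm(x)\geq H(s,m)$ for all $u\in U$. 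I build $\tau$ and a ``belief process of player~$2$'' by induction on the stage $q$: writing $\GR_q:=\sigma(u_1,\dots,u_{q-1})$ for player~$2$'s information before stage $q$ (so $\GR_1$ is trivial) and $M_{t_q}$ for a regular conditional law of $B_{t_q}$ given $\GR_q$, player~$2$ plays the pure action $v_q=v^*(t_q,M_{t_q})$. Since $v_1,\dots,v_{q-1}$, hence $M_{t_q}$, are functions of $(u_1,\dots,u_{q-1})$ only, this is consistent and defines a genuine $\tau\in\TR(\pi)$. Finally set $M^\sigma_s:=p^{t_q,M_{t_q}}_{s}$ for $s\in[t_q,t_{q+1})$, $q=1,\dots,N-1$, and $M^\sigma_T:=p^{t_{N-1},M_{t_{N-1}}}_{T}$, a c\`adl\`ag $\PR_1$-valued process.

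I first check $M^\sigma\in\MR(t,m)$. Because each $v_j$ is a function of $(u_1,\dots,u_{j-1})$, each $u_j$ is a function of $(B_{t_1},\dots,B_{t_j})$ and of the auxiliary uniform randomisations, so the Brownian increment $B_{t_{q+1}}-B_{t_q}$ is independent of $\GR_q\vee\sigma(B_{t_q})$ (and even of $\GR_{q+1}\vee\sigma(B_{t_q})$). Conditioning $\phi(B_{t_{q+1}})$ first on $(B_{t_q},\GR_q)$ and then on $\GR_q$ then gives $\E[M_{t_{q+1}}(\phi)\,|\,\GR_q]=p^{t_q,M_{t_q}}_{t_{q+1}}(\phi)$ for all $\phi\in C_b(\R^d)$; together with the Markov property \eqref{Markov} inside each interval $[t_q,t_{q+1})$, with $\FR^{M^\sigma}_{t_q}\subseteq\GR_q$ and $M_{t_q}$ being $\FR^{M^\sigma}_{t_q}$-measurable, and with a right-continuity argument through the backward martingale convergence theorem (exactly as in the proof that $V$ is convex), this yields property $(ii)$; property $(i)$ is immediate since $\GR_1$ is trivial, so $M^\sigma_t=M_{t_1}=m$. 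In particular the process $\hat M$ attached to $M^\sigma$ and $\pi$ in Lemma~\ref{d1var} satisfies $\hat M_{t_{q+1}}=p^{t_q,M_{t_q}}_{t_{q+1}}$, i.e. it is the conditional law of $B_{t_{q+1}}$ given $\GR_q$.

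For the payoff, let $\widetilde M_{t_q}$ be the conditional law of $B_{t_q}$ given $\GR_{q+1}=\sigma(u_1,\dots,u_q)$; the same conditioning computation (now with respect to $\GR_{q+1}$) gives $M_{t_{q+1}}=p^{t_q,\widetilde M_{t_q}}_{t_{q+1}}$, hence, by the two-sided estimate \eqref{compd} of Lemma~\ref{dp},
\[ d_1(\widetilde M_{t_q},M_{t_q})\leq d_1\!\big(p^{t_q,\widetilde M_{t_q}}_{t_{q+1}},\,p^{t_q,M_{t_q}}_{t_{q+1}}\big)+2\sqrt{d(t_{q+1}-t_q)}=d_1(M_{t_{q+1}},\hat M_{t_{q+1}})+2\sqrt{d(t_{q+1}-t_q)}.\]
Since $u_q$ and $v_q$ are $\GR_{q+1}$-measurable, $\E[f(t_q,B_{t_q},u_q,v_q)]=\E\big[\int_{\R^d}f(t_q,x,u_q,v_q)\widetilde M_{t_q}(dx)\big]$, and using the defining inequality of $v^*$ at $m=M_{t_q}$ together with the $Lip(f)$-Lipschitz continuity of $f$ in $x$,
\[ \int_{\R^d}f(t_q,x,u_q,v_q)\widetilde M_{t_q}(dx)\geq \int_{\R^d}f(t_q,x,u_q,v_q)M_{t_q}(dx)-Lip(f)\,d_1(\widetilde M_{t_q},M_{t_q})\geq H(t_q,M_{t_q})-Lip(f)\,d_1(\widetilde M_{t_q},M_{t_q}).\]
Multiplying by $(t_{q+1}-t_q)$, summing over $q$ and taking expectations, the previous display bounds the accumulated error, so
\[ \gamma_\pi(t,m,\sigma,\tau)\geq \E\Big[\sum_{q=1}^{N-1}(t_{q+1}-t_q)H(t_q,M_{t_q})\Big]-Lip(f)\Big(\E\Big[\sum_{q=1}^{N-1}(t_{q+1}-t_q)d_1(M_{t_{q+1}},\hat M_{t_{q+1}})\Big]+2T\sqrt{d|\pi|}\Big).\]

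To conclude, note $M^\sigma_{t_q}=M_{t_q}$, so the estimate obtained inside the proof of Lemma~\ref{Hriemann}, applied to the single process $M^\sigma\in\MR(t,m)$ (it uses only the Lipschitz continuity of $H$, Lemma~\ref{dp} and Lemma~\ref{d1var}), gives $\E\big[\sum_q(t_{q+1}-t_q)H(t_q,M_{t_q})\big]\geq \E\big[\int_t^T H(s,M^\sigma_s)ds\big]-\eta_\pi$, where $\eta_\pi$ tends to $0$ uniformly over $\MR(t,m)$; moreover $M^\sigma\in\MR(t,m)$ forces $\E\big[\int_t^T H(s,M^\sigma_s)ds\big]\geq V(t,m)$, and the bracketed error in the last display tends to $0$ uniformly in $\sigma$ by the second assertion of Lemma~\ref{d1var}. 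Hence $\sup_{\tau\in\TR(\pi)}\gamma_\pi(t,m,\sigma,\tau)\geq V(t,m)-\varepsilon_\pi$ with $\varepsilon_\pi\to0$ independently of $\sigma$; taking the infimum over $\sigma$ and then $\liminf_{|\pi|\to 0}$ yields \eqref{ineqliminf}.

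The step I expect to be the main obstacle is showing that $M^\sigma$ is genuinely an element of $\MR(t,m)$: one must make player~$2$'s reply depend only on the observed actions so that the \emph{whole} realised action history becomes adapted to the filtration of $(B_{t_q})$ enlarged by the independent randomisations --- this is precisely what makes $B_{t_{q+1}}-B_{t_q}$ independent of $\GR_q\vee\sigma(B_{t_q})$ and produces property $(ii)$ --- and then promote the partition-point identities to the right-continuous filtration $\FR^{M^\sigma}$. The remaining, purely quantitative, difficulty is that the two approximation errors must vanish \emph{uniformly} in $\sigma$; this is taken care of by funneling every estimate through the strategy-free Lemmas~\ref{d1var} and~\ref{Hriemann}.
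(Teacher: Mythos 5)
Your proof is correct, and its core is the same as the paper's: you build player~$2$'s reply from a measurable selection $v^*$ of $\mbox{Argmax}_v\min_u\int f\,dm$ evaluated at player~$2$'s current belief, turn the realised beliefs into an auxiliary process in $\MR(t,m)$, and absorb the per-stage ``information gain'' error through Lemma~\ref{d1var}. Two genuine differences are worth recording. First, your concluding step is more elementary than the paper's: the paper takes a minimizing sequence of partitions with $\varepsilon_n$-optimal $\sigma_n$, invokes the compactness result (Proposition~\ref{Mcompact}) to extract an $\LR(\kappa_1)$-limit $M\in\MR(t,m)$ of the belief processes, and then applies Lemma~\ref{Hriemann} in full; you instead observe that only the quantitative Riemann-sum half of Lemma~\ref{Hriemann}'s proof is needed, since $\E[\int_t^T H(s,M^\sigma_s)ds]\geq V(t,m)$ holds trivially for each fixed $\sigma$ because $M^\sigma\in\MR(t,m)$. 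This bypasses Proposition~\ref{Mcompact} entirely for this direction (though the paper still needs it nowhere else than here, so nothing else breaks), at the cost of having to check — as you correctly do — that all error terms vanish uniformly over $\MR(t,m)$, which Lemma~\ref{d1var} delivers. Second, a minor variant: the paper anchors the auxiliary process at the \emph{post}-stage beliefs ($M^\pi_{t_q}$, conditioning on actions up to and including round $q$) and controls $d_1(M^\pi_{t_q},\hat M^\pi_{t_q})$ via the first assertion of Lemma~\ref{d1var}, whereas you anchor it at the \emph{pre}-stage beliefs and bound $d_1(\widetilde M_{t_q},M_{t_q})$ through \eqref{compd} by $d_1(M_{t_{q+1}},\hat M_{t_{q+1}})+2\sqrt{d(t_{q+1}-t_q)}$, i.e.\ via the second assertion plus a harmless extra $O(\sqrt{d|\pi|})$. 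Both bookkeeping choices are sound; yours costs one application of \eqref{compd} but makes the identification of $\hat M$ in Lemma~\ref{d1var} immediate.
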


\begin{proof}
For some given partition $\pi$ of $[t,T]$, let $\sigma\in\Sigma(\pi)$ be an arbitrary strategy for player 1. We shall define recursively an answer $\bar\tau\in\TR$ from player 2. Since it will be a pure strategy for each $q\in\{ 1, \ldots,N-1\}$, $\bar\tau_q$ will be identified with a map from $(U\times V)^{q-1}$ to  $V$.
\begin{itemize}
\item let $v^* :(s,m')\in[t,T]\times\PR_2\mapsto v^*(t,m')\in U$ be a measurable selection of
\[ \mbox{Argmax}_{v\in V}\min_{u\in U}\int_{\R^d}f(t_q,x,u,v)dm'(x).\]
\item For $q=1$, set $\bar\tau_1=v^*(t,m)$.
\item Suppose that, for some $q\in\{ 2,\ldots,N\}$, $\bar\tau_1,\ldots,\bar\tau_{q-1}$ is defined. Remark that, for any couple $(\sigma,\tau)\in\Sigma(\pi)\times\TR(\pi)$, the restriction of $\P_{t,m,\pi,\sigma,\tau}$ on the coordinates $(B_{t_1},u_1,v_1,\ldots,B_{t_q},u_q)$ depends on $\tau$ only through $\tau_1,\ldots,\tau_{q-1}$.
Therefore it makes sense to define $\bar\tau_q$ as
\[ \bar\tau_q=v^*(t_q,\hat M^{\pi}_{t_q}),\]
where $\hat M^{\pi}_{t_q}$ is the conditional law of $B_{t_q}$ given $(u_1,v_1,\ldots,u_{q-1},v_{q-1})$ under $\P_{t,m,\pi,\sigma,\bar\tau}$ for $q=2,...,N-1$, and $\hat{M}^{\pi}_{t_1}=m$. 
\end{itemize}
We also need to define for all $q=1,...,N-1$, the variable $M^{\pi}_{t_q}$ as the conditional law of  of $B_{t_q}$ given $(u_1,v_1,\ldots,u_{q},v_{q})$ under $\P_{t,m,\pi,\sigma,\bar\tau}$. Note that the variables $\hat M^\pi_{t_q}$ and $M^\pi_{t_q}$ correspond respectively to the belief of player $2$ on $B_{t_q}$ before and after playing round $q$, when knowing that the strategy $\sigma$ is used by player $1$. Note also that by construction, for $q=2,..,N-1$,
\[ \hat{M}^{\pi}_{t_q}=p^{t_{q-1},M^{\pi}_{t_{q-1}}}_{t_{q}}.\]
At last, we extend the definition of the process $(M^{\pi}_s)_{s \in [t,T]}$ as follows:
\[ \forall q=1,...,N-1, \forall s\in [t_q,t_{q+1}), \; M^{\pi}_s=p^{t_q,M^{\pi}_{t_q}}_s, \; M^\pi_T=p^{t_{N-1},M^\pi_{t_{N-1}}}_T.\]
By construction, the law of $M^\pi$ belongs to $\tilde\MR(t,m)$. 
\pa
Let us start the computations. For all $q=1,...,N-1$
\begin{align*}
\E_{t,m,\pi,\sigma,\bar\tau}[f(t_q,B_{t_q},u_q,v_q)]&=
\E_{t,m,\pi,\sigma,\bar\tau}\left[\E_{t,m,\pi,\sigma,\bar\tau}[f(t_q,B_{t_q},u_q,v_q)|u_1,v_1,\ldots,u_{q},v_q]\right]\\
&=
\E_{t,m,\pi,\sigma,\bar\tau}\left[\int_{\R^d}f(t_q,x,u_q,v_q)d M^\pi_{t_q}(x)\right]\\
&\geq\E_{t,m,\pi,\sigma,\bar\tau}\left[\int_{\R^d}f(t_q,x,u_q,v_q)d \hat{M}^\pi_{t_q}(x) -Cd_1(M^\pi_{t_q},\hat{M}^\pi_{t_q})\right]
\\
& \geq \E_{t,m,\pi,\sigma,\bar\tau}\left[ H(t_q,\hat{M}^\pi_{t_q})  -Cd_1(M^\pi_{t_q},\hat{M}^\pi_{t_q})\right]\\
& \geq \E_{t,m,\pi,\sigma,\bar\tau}\left[ H(t_q,M^\pi_{t_q})  -2Cd_1(M^\pi_{t_q},\hat{M}^\pi_{t_q})\right].
\end{align*}
Taking the sum over all $q$, we have proven that, for all $\sigma\in\Sigma(\pi)$, there exists $\bar\tau\in\TR(\pi)$ and $M^\pi\in \MR(t,m)$ such that
\begin{align*}
%\E_{t,m,\pi,\sigma,\bar\tau}[\sum_{q=1}^{N-1} (t_{q+1}-t_q) f(t_q,B_{t_q},u_q,v_q)]
\gamma_{\pi}(t,m,\sigma,\bar\tau)\geq &
\E_{t,m,\pi,\sigma,\bar\tau}\left[\sum_q (t_{q+1}-t_q) \left(H(t_q, M^\pi_{t_q})-2Cd_1(M^\pi_{t_q},\hat{M}^\pi_{t_q})\right)\right].\\
\end{align*} 
Let $(\pi_n)_{n \geq 1}$ denote a sequence of partitions such that $|\pi_n| \rightarrow 0$ and 
\[ \lim_n V_{\pi_n}(t,m)= \liminf_{|\pi|\rightarrow 0} V_\pi (t,m).\]
Let $(\varepsilon_n)_{n \geq 1}$ a sequence of positive numbers with limit $0$ and for all $n\geq 1$, let $\sigma_n \in \Sigma(\pi_n)$ be an $\varepsilon_n$-optimal strategy in the game $\Gamma_{\pi_n}(t,m)$. Thanks to the above analysis, there exists $\tau_n \in \TR(\pi_n)$ such that
\begin{align*}
V_{\pi_n}(t,m) +\varepsilon_n &\geq  \gamma_{\pi_n}(t,m,\sigma_n,\tau_n) \\
&\geq 
\E_{\pi_n,t,m,\sigma_n,\tau_n}\left[\sum_q (t_{q+1}-t_q) \left(H(t_q, M^{\pi_n}_{t_q})-2Cd_1(M^{\pi_n}_{t_q},\hat{M}^{\pi_n}_{t_q})\right)\right].
\end{align*}
 
\pa By Proposition \ref{Mcompact}, there exists a subsequence of $(M^{\pi_n})_{n \geq 1}$ which converges for $\LR(\kappa_1)$ to some process $M$ with law in $ \MR(t,m)$. 
After some eventual enlargement of $(\Omega,\FR,\P)$ we may consider representations of  $M$  and the sequence $(M^{\pi_n})$  on this same space. 
Using Lemmas \ref{d1var} and \ref{Hriemann}, we get finally
\[ \liminf_{|\pi|\rightarrow 0}V_\pi(t,m)\geq \E[\int_t^T H(s,M_s)ds]\geq V(t,m).\]

\end{proof}

\section{Characterization of the value function}
\label{edp}

\pa The main result of this paper is the characterization of the value function $V$ as the largest subsolution, in some class of functions, of a Hamilton-Jacobi equation. In contrast with previous works, $V$ depends here in a dynamic way on the probability measures $m\in\PR_2$ which are no more of finite support, neither can be forced to have a density with respect to the Lebesgue-measure.  We have found the appropriate framework in the paper \cite{CDLL}.\pa
For technical reasons (see Remark \ref{remarktestfunctions} below), we need to consider the two metrics $d_1$ and $d_2$ on the space $\PR_2$. The reference metric is $d_1$ and is used implicitly everywhere, and we will say explicitly $d_2$-continuous, $d_2$-convergent, etc... whenever we need to use the metric $d_2$. In particular, functions defined on $\PR_2$ or $[0,T]\times \PR_2$, $[0,T]\times \PR_2\times \R^d$ are $d_2$-continuous if they are globally continuous when $\PR_2$ is endowed with the metric $d_2$ and the other spaces with the usual topology.
\pa 
Given a map $F:[0,T]\times\PR_2 \rightarrow \R$, we consider the following equation:
\begin{equation}
\label{eq1}
\left\{
\begin{array}{l}
\partial_tU(t,m)+\frac{1}{2}\int_{\R^d}\di[D_mU](t,m,x)m(dx)+F(t,m)=0,\; (t,m)\in[0,T)\times\PR_2,\\
\\
U(T,m)=0,\; m\in\PR_2,
\end{array}
\right.
\end{equation}
where the divergence operator $\di$ acts on the spatial variable $x$ and $D_mU (t,m,x)$ is defined in \cite{CDLL} (Definition 2.2.1) as follows:
\pa
At first, a map $g :\PR_2 \mapsto \R$ is said to be differentiable if there exists a measurable map $\frac{\delta g}{\delta m}:\PR_2\times\R^d\rightarrow\R$ with at most quadratic growth with respect to the spatial variable and such that, for all $m,m'\in\PR_2$,
\[ \lim_{r\rightarrow 0^+}\frac{g\left( (1-r)m+rm'\right)-g(m)}{r}=
\int_{\R^d}\frac{\delta g}{\delta m}(m,x)d(m'-m)(x),\]
and (as a normalization convention)
\[ \int_{\R^d}\frac{\delta g}{\delta m}(m,x)dm(x)=0.\]
\pa
As a consequence of the definition, if $g$ is differentiable, the map
\[ r \in [0,1] \rightarrow g((1-r)m+rm') \in \R,\]
is differentiable on $(0,1)$ with derivative $\int_{\R^d}\frac{\delta g}{\delta m}((1-r)m+rm',x)d(m'-m)(x)$. If moreover, $\frac{\delta g}{\delta m}$ is $d_2$-continuous 
and with at most quadratic growth with respect to $x$, uniformly in $m$, the above derivative is $C^1$ (using dominated convergence) and we may apply the fundamental theorem of calculus, yielding the relation
\be \label{eqprimitive} 
g(m')-g(m)= \int_0^1  \int_{\R^d}\frac{\delta g}{\delta m}((1-r)m+rm',x)d(m'-m)(x)dr. 
\ee
\pa
Then,  if $\frac{\delta g}{\delta m}(m,x)$ is $C^1$ in $x$, we define the intrinsic derivative $D_m g$ by
\[ D_m g(m,x):=D_x\left(\frac{\delta g}{\delta m}\right)(m,x).\]
We refer to \cite{CDLL} for the interpretation of $D_m g$, which is related to the geometry of the Wasserstein space, coincides with the $L^2$-derivative considered by Lions \cite{Lions} and appears quite naturally as a measure derivative along vector fields (see Proposition 2.3 in \cite{CDLL}).
\pa 
In the sequel, we are mainly interested in the case where $F=H$ :
\begin{equation}
\label{eqH}
\left\{
\begin{array}{l}
\partial_tU(t,m)+\frac{1}{2}\int_{\R^d}\di[D_mU](t,m,x)m(dx)+H(t,m)=0,\; (t,m)\in[0,T)\times\PR_2,\\
\\
U(T,m)=0,\; m\in\PR_2.
\end{array}
\right.
\end{equation}
\pa 

\pa   
The problem is that $H$ isn't sufficiently regular. Indeed a crucial tool in our argumentation is a classical, explicit solution for some equations of type \eqref{eq1} (see Lemma \ref{techlemma}), which only exists under strong regularity assumptions. For this reason we need to approach $H$ by smooth functions $F$. 
\pa
The regularity assumptions $(A1)$ we require for the function $F$ in (\ref{eq1}) are the following:
 \[ (A1) 
\left\{
\begin{array}{l}
\mbox{The map } (t,m)\in[0,T]\times\PR_2\mapsto F(t,m) \mbox{ is bounded and continuous}, \\
 \mbox{$\frac{\delta F}{\delta m}, D_x \left(\frac{\delta F}{\delta m}\right), D^2_x\left(\frac{\delta F}{\delta m}\right)$ exist
 and are bounded and continuous}.
\end{array}\right.\]

\pa We call $(A2)$ the regularity assumptions for functions $\psi$ that will play the role of a terminal condition for equation (\ref{eq1}):
\[ (A2) 
\left\{
\begin{array}{l}
\mbox{The map } m\in\PR_2\mapsto \psi(m)\mbox{ is continuous with at most linear growth, i.e.:}\\
 \mbox{there exists $M>0$ such that $\forall m \in \PR_2$, $|\psi(m)|\leq M(1+|m|_1)$},\\
 \mbox{$\frac{\delta \psi}{\delta m}, D_x \left(\frac{\delta \psi}{\delta m} \right), D^2_x\left( \frac{\delta \psi}{\delta m}\right)$ exist, are continuous,
}\\
\mbox{and have linear growth in $x$, uniformly in $m$}.
\end{array}\right.\]

\pa We introduce also the weaker assumption $(A3)$ for test functions $\varphi$ in order to define the notion of viscosity subsolution for equation \ref{eq1}:
\[  (A3)
\left\{
\begin{array}{l}
\mbox{ The map }(t,m)\in[0,T]\times\PR_2\mapsto \varphi(t,m) \mbox{ is lower semi-continuous and $d_2$-continuous,}\\
 \mbox{ $\frac{\partial \varphi}{\partial t},\frac{\delta \varphi}{\delta m}, D_x\left(\frac{\delta \varphi}{\delta m}\right),D^2_x \left(\frac{\delta \varphi}{\delta m} \right)$ exist and are $d_2$-continuous on $[0,T)\times \PR_2\times \R^d$,} \\
\mbox{ $\frac{\delta \varphi}{\delta m},D_x\left(\frac{\delta \varphi}{\delta m}\right),D^2_x \left(\frac{\delta \varphi}{\delta m} \right)$ have at most quadratic growth with respect to $x$, uniformly in $(t,m)$}
\end{array}\right.\]
\pa
\begin{Remark}\label{remarktestfunctions}{\rm
We need to consider $\tilde\varphi(t,m)=\varphi(t,m)+\epsilon|m|_2^2$ as an admissible test function in Lemma \ref{clas}, where the second term is related to a compactness issue. Actually, $\varphi$ will be more regular than required by $(A3)$, but note that the map $\psi(m)=|m|_2^2$ is only lower semi-continuous and not continuous, but is $d_2$-continuous.
 An easy computation shows that $\frac{\delta \psi}{\delta m}(m,x)=|x|^2-|m|_2^2$, and thus $\psi$ satisfies $(A3)$. The above assumptions are thus completely tailored to the problem, and we would like to emphasize that there are several different ways to define test functions or classical solutions in order to obtain a coherent notion of (viscosity) solutions on measure spaces. }
\end{Remark}
\pa

\begin{Definition}
\begin{enumerate}
\item We call a (classical) {\em solution} of \eqref{eq1} a map $U:[0,T]\times\PR_2\rightarrow\R$ which is continuous, satisfies assumption (A3) and for which \eqref{eq1} is satisfied for all $(t,m)\in[0,T]\times\PR_2$.
\item We call a {\em subsolution} of (\ref{eq1}) a map $U:[0,T]\times\PR_2\rightarrow\R$ which is upper semi-continuous and satisfies, for all $(t,m)\in[0,T)\times\PR_2$, and for all $\varphi$ satisfying assumption (A3), such that $\varphi-U$ has a local minimum at $(t,m)$,
 \begin{equation}
 \label{testf}
  \partial_t\varphi(t,m)+\frac{1}{2}\int_{\R^d}\di[D_m\varphi](t,m,x)m(dx)+F(t,m)\geq 0.
  \end{equation}
\end{enumerate}
 \end{Definition}
 
\begin{Remark}{\rm It is easy to prove that a classical solution of  \eqref{eq1} is also a subsolution, but there is no need here for this result. In the definition of subsolution, local refers to the metric $d_1$ on $\PR_2$.
}
\end{Remark}

We will prove in this section the following characterization of the value function $V$:

 \begin{Theorem}
 \label{bigsub}
 $V$ is the largest bounded and continuous subsolution of (\ref{eqH}) which is convex in $m$ and satisfies the terminal condition $V(T,\cdot)=0$.
 \end{Theorem}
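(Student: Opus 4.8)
The proof splits naturally into two parts, matching the assertion that $V$ is \emph{a} subsolution and that it is the \emph{largest} one. For the first part, I would verify that $V$ is bounded (immediate from $\|f\|_\infty < \infty$, hence $\|H\|_\infty \le C$ and $|V| \le C(T-t)$), convex in $m$ (already proved), and continuous in $(t,m)$ for the $d_1$-metric; continuity likely follows from a combination of the estimate $d_1(p^{t,m}_s, p^{t,m'}_s) \le d_1(m,m')$ in Lemma \ref{dp}, the Lipschitz property of $H$ in Lemma \ref{HLipsch}, the $\LR(\kappa_1)$-compactness of $\tilde\MR(t,m)$ (Proposition \ref{Mcompact}), and a dynamic programming/concatenation argument for $V$. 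The terminal condition $V(T,\cdot)=0$ holds trivially since $\MR(T,m)=\{m\}$. The genuine subsolution inequality \eqref{testf} I would obtain from a \textbf{sub-dynamic-programming principle}: for any $M \in \MR(t,m)$ and any $t \le s \le T$,
\[
V(t,m) \le \E\Big[\int_t^s H(r,M_r)\,dr + V(s,M_s)\Big],
\]
which follows from the definition of $V$ and the tower/Markov property used to glue an arbitrary element of $\MR(t,m)$ on $[t,s]$ with optimal (or $\e$-optimal) elements of $\MR(s,M_s)$ on $[s,T]$ — essentially the same splitting/concatenation argument underlying the convexity proof. Then, given a test function $\varphi$ satisfying (A3) with $\varphi - V$ having a local minimum at $(t,m)$, I would choose $M_r = p^{t,m}_r$ (the pure heat-flow, which is indeed in $\MR(t,m)$), write $V(t,m) \le \varphi(t,m)$ and $V(s,M_s) \ge \varphi(s,p^{t,m}_s) - o(1)$ near the minimum, apply Itô's formula / the chain rule \eqref{eqprimitive} together with the heat equation \eqref{heat} to expand $\varphi(s,p^{t,m}_s)-\varphi(t,m) = \int_t^s \big(\partial_t\varphi + \tfrac12\int \di[D_m\varphi]\,dp^{t,m}_r\big)dr$, divide by $s-t$ and let $s \downarrow t$, yielding \eqref{testf} with $F=H$.

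For the maximality, let $W$ be any bounded continuous subsolution of \eqref{eqH}, convex in $m$, with $W(T,\cdot)=0$; I must show $W \le V$. The strategy is to show $W(t,m) \le \E[\int_t^T H(s,M_s)\,ds]$ for every $M \in \MR(t,m)$ and then take the infimum. The natural route is to prove that $s \mapsto W(s,M_s) + \int_t^s H(r,M_r)\,dr$ is (essentially) a submartingale along any $M \in \MR(t,m)$, so that $W(t,m) = \E[W(t,M_t)] \le \E[W(T,M_T)] + \E[\int_t^T H(r,M_r)\,dr] = \E[\int_t^T H(r,M_r)\,dr]$. Since $W$ is only a viscosity subsolution, not classical, one cannot differentiate it directly. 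The standard device — and the one the excerpt's infrastructure is clearly built for — is \textbf{sup-convolution / mollification}: one first reduces to the case where $H$ is replaced by a smooth $F$ satisfying (A1) approximating $H$ from below (using Lemma \ref{HLipsch} and the mollification estimates already exploited in Lemma \ref{d1var}), then one uses the explicit classical solution from the announced Lemma \ref{techlemma} as a smooth perturbation to regularize $W$. Concretely: for the regularized problem, smooth $W$ (adding a small multiple of $|m|_2^2$ as forewarned in Remark \ref{remarktestfunctions} to get compactness of the contact set), show the mollified function is a classical supersolution of the equation with a controlled error, apply Itô along $M$ (which is exactly the computation in the subsolution step but in reverse), and then remove the regularization.

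In carrying out the submartingale/maximality estimate I would work partition-by-partition: for $\pi = \{t=t_0 < \dots < t_N = T\}$ with $|\pi| \to 0$, bound $W(t_q, M_{t_q}) - \E[W(t_{q+1},M_{t_{q+1}}) \mid \FR^M_{t_q}]$ using property $(ii)$ of $\MR(t,m)$ (which says $\E[M_{t_{q+1}} \mid \FR^M_{t_q}] = p^{t_q,M_{t_q}}_{t_{q+1}}$, i.e.\ the conditional law moves by heat flow) together with convexity of $W$ in $m$ (Jensen, in the spirit of \eqref{jensen} in Remark \ref{remjen}) to pass from $M_{t_{q+1}}$ to $p^{t_q,M_{t_q}}_{t_{q+1}}$, and then the classical-solution comparison (Lemma \ref{techlemma}) applied to $W$ regularized, to bound $W(t_q,M_{t_q}) - W(t_{q+1},p^{t_q,M_{t_q}}_{t_{q+1}})$ by $-\int_{t_q}^{t_{q+1}} H(r,\cdot)\,dr$ plus an $o(t_{q+1}-t_q)$ error; summing and using Lemma \ref{Hriemann}-type Riemann-sum convergence finishes it. The \textbf{main obstacle} is precisely this last comparison in the absence of classical regularity of $W$: one needs the mollification of a merely-viscosity-subsolution on the Wasserstein space to again be an (approximate) subsolution, which on $\PR_2$ is delicate because convex combinations of measures do not interact with Wasserstein convolution as simply as mollification does on $\R^d$ — this is exactly why the paper invokes the explicit classical solution of Lemma \ref{techlemma} as the regularizing vehicle rather than mollifying $W$ directly, and why the two metrics $d_1,d_2$ are both needed (compactness of sublevel sets of $|m|_2^2$ in $d_1$ versus $d_2$-continuity of the perturbation). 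I expect the bulk of the technical work to be the careful bookkeeping of these approximation errors and the verification that the regularized objects remain admissible test functions / classical solutions in the precise senses of (A1)--(A3).
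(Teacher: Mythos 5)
Your overall architecture matches the paper's: the subsolution half is obtained exactly as you describe (concatenate the deterministic heat flow on $[t,s]$ with an $\epsilon$-optimal process on $[s,T]$ to get $V(t,m)\leq \int_t^s H(r,p^{t,m}_r)dr+V(s,p^{t,m}_s)$, then test with $\varphi$ and use the chain rule of Lemma \ref{techlemma}, part 1); note that only this heat-flow case of your sub-dynamic-programming principle is needed, not the version for arbitrary $M$. The maximality half also follows the paper's skeleton: partition $[t_0,T]$, use convexity of $U$ together with property $(ii)$ of $\MR(t_0,m_0)$ (Jensen) to replace $\E[U(t_{k+1},M_{t_{k+1}})|\FR^M_{t_k}]$ by $U(t_{k+1},p^{t_k,M_{t_k}}_{t_{k+1}})$, approximate $H$ by a smooth $\tilde H$ satisfying $(A1)$ (the paper does this via Stone--Weierstrass on measures with compact support, Lemmas \ref{stoneweierstrass} and \ref{tildeH}, not by mollification), apply a one-step estimate along the heat flow, and pass to the limit by a Riemann-sum argument.

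There is, however, a genuine gap at the crux of the maximality half, namely the one-step estimate $U(t_0,m_0)\leq U(t_1,p^{t_0,m_0}_{t_1})+\int_{t_0}^{t_1}H(r,p^{t_0,m_0}_r)dr$ (Proposition \ref{incr}). Your plan for it --- mollify $W$, show the mollified function is an approximate classical supersolution, apply It\^o, remove the regularization --- is precisely the step you yourself flag as delicate, and it is not what the paper does: $W$ is never regularized. Instead, the comparison Lemma \ref{clas} compares the viscosity subsolution directly with the \emph{explicit} classical solution $\phi(t,m)=\psi(p^{t,m}_{t_1})+\int_t^{t_1}F(r,p^{t,m}_r)dr$, by maximizing $U-\phi-\gamma(t_1-t)-\epsilon|m|_2^2$ (the quadratic term giving $d_1$-compactness of sublevel sets) and invoking the subsolution inequality at the maximum to reach the contradiction $-\gamma-d\epsilon\geq 0$. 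But this lemma requires the terminal datum $\psi$ to satisfy the smoothness assumptions $(A2)$, and $U(t_1,\cdot)$ is only continuous, so one cannot take $\psi=U(t_1,\cdot)$. The missing idea is the construction of Lemma \ref{g}: an explicit $(A2)$-smooth, nonnegative penalization $\psi_\delta$ vanishing at the target measure $m_1=p^{t_0,m_0}_{t_1}$ and bounded below by a positive constant outside any $d_1$-ball of radius larger than $((2\pi)^{d/2}+2\sqrt d)\sqrt\delta$ around $m_1$. Rescaling $\psi_\delta$ and adding a constant $\tilde a$ close to $U(t_1,m_1)$ produces an admissible terminal datum dominating $U(t_1,\cdot)$ everywhere while equal to $\tilde a\approx U(t_1,m_1)$ at $m_1$; Lemma \ref{clas} applied with this $\psi$, evaluated at $(t_0,m_0)$ where $\phi(t_0,m_0)=\psi(m_1)+\int_{t_0}^{t_1}F$, then yields the pointwise heat-flow estimate up to $\epsilon$. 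Without this device (or an equivalent one-sided smoothing of the terminal condition rather than of $W$), your submartingale step does not close.
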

 
\pa We start our computations with a technical lemma in which we construct smooth solutions of \eqref{eq3} with smooth terminal conditions and compute the derivative of test functions along the curve $s \mapsto (s,p^{t,m}_s)$.

\begin{Lemma}\label{techlemma}
1) If $\varphi$ is a test function satisfying the assumptions $(A3)$, then for all $(t,m)\in [0,T)\times \PR_2$: 
\[ \lim_{s \searrow t}\frac{\varphi(s,p^{t,m}_s)-\varphi(t,m)}{s-t}=\partial_t\varphi(t,m)+\frac{1}{2}\int_{\R^d}\di[D_m\varphi](t,m,x)m(dx) .\] 
2) Let $F:[0,T]\times\PR_2\rightarrow\R$ be such that assumptions $(A1)$ hold true and let $\psi:\PR_2\rightarrow\R$ which satisfies $(A2)$. Let $t_1\in[0,T]$. 
Then the following equation
\begin{equation}
\label{eq3}
\left\{ 
\begin{array}{l}
\partial_t\varphi(t,m)+\frac{1}{2}\int_{\R^d}\di [D_m\varphi (t,m,x)]dm(x)+F(t,m)=0, \; (t,m)\in[0,t_1)\times\PR_2,\\
\varphi(t_1,m)=\psi(m), \; m\in\PR_2,
 \end{array}
 \right.
 \end{equation}
 admits as unique classical solution the continuous map $\varphi$ which satisfies $(A3)$ and is defined by: 
 \begin{equation}
\label{eqphi} 
 \varphi(t,m)=\psi(p^{t,m}_{t_1})+\int_t^{t_1}F(s,p^{t,m}_s)ds. 
 \end{equation}

\end{Lemma}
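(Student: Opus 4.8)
\textbf{Part 1.} The plan is to differentiate $s \mapsto \varphi(s,p^{t,m}_s)$ directly at $s=t$. First I would separate the time dependence from the measure dependence by writing, for $s$ slightly larger than $t$,
\[
\varphi(s,p^{t,m}_s) - \varphi(t,m) = \big(\varphi(s,p^{t,m}_s) - \varphi(t,p^{t,m}_s)\big) + \big(\varphi(t,p^{t,m}_s) - \varphi(t,m)\big).
\]
The first bracket, divided by $s-t$, tends to $\partial_t\varphi(t,m)$ using that $\partial_t\varphi$ exists and is $d_2$-continuous and that $p^{t,m}_s \to m$ in $d_2$ as $s \searrow t$ (Lemma \ref{dp}). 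For the second bracket I would use the linear-interpolation formula \eqref{eqprimitive}: writing $m_s := p^{t,m}_s$,
\[
\varphi(t,m_s) - \varphi(t,m) = \int_0^1 \int_{\R^d} \frac{\delta\varphi}{\delta m}\big(t,(1-r)m+rm_s,x\big)\, d(m_s-m)(x)\, dr.
\]
Since $\frac{\delta\varphi}{\delta m}(t,\cdot,\cdot)$ is $C^2$ in $x$, I would apply Itô's formula (as in the proof of Lemma \ref{d1var}) to $x \mapsto \frac{\delta\varphi}{\delta m}(t,\mu,x)$ evaluated along the Brownian motion started from law $m$, which gives
\[
\int_{\R^d} \frac{\delta\varphi}{\delta m}(t,\mu,x)\, d(m_s-m)(x) = \frac{1}{2}\int_t^s \int_{\R^d} \tr\!\left[D^2_x\tfrac{\delta\varphi}{\delta m}(t,\mu,y)\right] p^{t,m}_r(dy)\, dr.
\]
Dividing by $s-t$, interchanging limits and integrals (justified by the uniform quadratic growth bound on $D^2_x\frac{\delta\varphi}{\delta m}$ in $(A3)$, boundedness of second moments of $p^{t,m}_r$ near $t$, and $d_2$-continuity of $D^2_x\frac{\delta\varphi}{\delta m}$ together with $(1-r)m+rm_s \to m$ in $d_2$), the double integral collapses to $\frac12 \int_{\R^d} \tr[D^2_x\frac{\delta\varphi}{\delta m}(t,m,y)]\,m(dy) = \frac12\int_{\R^d} \di[D_m\varphi](t,m,y)\,m(dy)$, since $D_m\varphi = D_x\frac{\delta\varphi}{\delta m}$ and $\di$ of it is $\tr D^2_x\frac{\delta\varphi}{\delta m}$.

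\textbf{Part 2.} For the explicit formula \eqref{eqphi} I would proceed in three steps. (a) \emph{Regularity:} Show that $\varphi(t,m) = \psi(p^{t,m}_{t_1}) + \int_t^{t_1} F(s,p^{t,m}_s)\,ds$ satisfies $(A3)$ and is continuous. This reduces, via the chain rule for $\frac{\delta}{\delta m}$ applied to the composition $m \mapsto p^{t,m}_s = \rho_{s-t}*m$ (which is affine in $m$, so $\frac{\delta}{\delta m}[\Psi(p^{t,m}_s)](m,x) = \int \frac{\delta\Psi}{\delta m}(p^{t,m}_s,y)\rho_{s-t}(y-x)\,dy$, with analogous convolution formulas for the $x$-derivatives), to the assumed regularity of $F$ in $(A1)$ and of $\psi$ in $(A2)$; the Gaussian convolutions only improve regularity and the growth bounds are preserved. (b) \emph{PDE and terminal condition:} The terminal condition $\varphi(t_1,m)=\psi(m)$ is immediate since $p^{t_1,m}_{t_1}=m$. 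For the equation, I would use the semigroup/Markov property \eqref{Markov}: $p^{t,m}_s = p^{t+h,\,p^{t,m}_{t+h}}_s$, hence
\[
\varphi(t,m) = \psi\big(p^{t+h,\,p^{t,m}_{t+h}}_{t_1}\big) + \int_t^{t+h} F(s,p^{t,m}_s)\,ds + \int_{t+h}^{t_1} F\big(s,p^{t+h,\,p^{t,m}_{t+h}}_s\big)\,ds = \varphi\big(t+h, p^{t,m}_{t+h}\big) + \int_t^{t+h} F(s,p^{t,m}_s)\,ds.
\]
Rearranging and dividing by $h$, then letting $h \searrow 0$ and applying Part 1 to the test function $\varphi$ (legitimate once (a) is established), yields $\partial_t\varphi(t,m) + \frac12\int \di[D_m\varphi](t,m,x)\,m(dx) + F(t,m) = 0$. (c) \emph{Uniqueness:} If $\tilde\varphi$ is another classical solution satisfying $(A3)$, apply Part 1 to $\tilde\varphi$ along $s \mapsto (s,p^{t,m}_s)$; the equation gives $\frac{d}{ds}\tilde\varphi(s,p^{t,m}_s) = -F(s,p^{t,m}_s)$ on $[t,t_1)$, and integrating from $t$ to $t_1$ with $\tilde\varphi(t_1,\cdot)=\psi$ recovers \eqref{eqphi}. (One should check $s \mapsto \tilde\varphi(s,p^{t,m}_s)$ is $C^1$ on $(t,t_1)$, which again follows from Part 1 applied at each interior point plus continuity of the resulting derivative.)

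\textbf{Main obstacle.} The delicate point is Part 1: carefully justifying the interchange of the limit $s \searrow t$ with the double integral $\int_0^1\int_t^s \cdots\, dr$, controlling the inner Itô integrand uniformly. This requires combining the uniform-in-$m$ quadratic growth of $D^2_x\frac{\delta\varphi}{\delta m}$ from $(A3)$ with a uniform bound on $\int |y|^2\, p^{t,m}_r(dy)$ for $r$ near $t$ (which grows linearly, $=|m|_2^2 + d(r-t)$) and with the $d_2$-continuity of $D^2_x\frac{\delta\varphi}{\delta m}$ to pass $(1-r)m + r p^{t,m}_s \to m$ inside. A closely related subtlety, needed for (a) in Part 2, is verifying that the convolution formulas for the measure- and space-derivatives of $m \mapsto \Psi(\rho_{s-t}*m)$ are valid and that the growth/continuity conditions of $(A3)$ survive; this is routine but must be done with care because $s-t$ can be arbitrarily small so one cannot rely on smoothing to absorb growth — one genuinely uses that $F$ and $\psi$ already carry the required derivatives.
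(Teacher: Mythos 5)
Your proposal is correct in outline and Part 2 follows the paper's proof almost verbatim (convolution formula for $\frac{\delta\varphi}{\delta m}$ of the candidate, semigroup property to reduce the PDE to a one-dimensional derivative computed via Part 1, and uniqueness by integrating Part 1 along $s\mapsto p^{t,m}_s$). Part 1, however, takes a genuinely different route to the key identity. The paper writes the measure increment at the \emph{later} time, $\varphi(s,m_s)-\varphi(s,m)$, expresses $\rho^{t,m}_s-\rho^{t,m}_{t'}$ through the heat equation, and integrates by parts to convert $\frac{\delta\varphi}{\delta m}\cdot\frac12\Delta\rho$ into $\frac12\di[D_m\varphi]\cdot\rho$; this forces a two-stage approximation (first $m$ with compact support so that Fubini applies, then general $m\in\PR_2$ by $d_2$-approximation; first $t'>t$ so that the density is smooth, then $t'\searrow t$). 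You instead write the increment at the earlier time and obtain the same identity
\[
\int_{\R^d}\frac{\delta\varphi}{\delta m}(t,\mu,x)\,d(m_s-m)(x)=\frac12\int_t^s\int_{\R^d}\tr\Bigl[D^2_x\tfrac{\delta\varphi}{\delta m}(t,\mu,y)\Bigr]\,p^{t,m}_\tau(dy)\,d\tau
\]
probabilistically via It\^o's formula, which bypasses both approximation stages and the density altogether; the final passage to the limit $s\searrow t$ (truncation at $|y|\le K$ plus $d_2$-continuity and uniform quadratic growth) is then identical to the paper's step 1.4. The one point your route leaves open is that, unlike in Lemma \ref{d1var} where $\|\nabla f_i^\eta\|_\infty\le 1$, here $D_x\frac{\delta\varphi}{\delta m}$ is only of quadratic growth, so for $m\in\PR_2$ the stochastic integral is a priori only a local martingale; you need a localization argument (its absolute value is dominated, via the It\^o decomposition, by $C(1+\sup_{r\le s}|X+B_r-B_t|^2)$, which is integrable, so dominated convergence gives zero expectation) or a preliminary reduction to compactly supported $m$ — which is precisely the detour the paper's analytic argument takes. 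With that standard fix supplied, your proof is complete.
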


\begin{Remark}{\rm A direct application of this lemma is that, if $H$ satisfies assumption (A1), then the map $(t,m)\mapsto U_0(t,m):=\int_t^TH(s,p^{t,m}_s)ds$ is a  classical solution of \eqref{eqH}. But remark that we cannot expect this map to be convex in $m$, unless this holds for $H$ itself. This implies that, in general, $U_0\neq V$ (i.e. the totally non revealing strategy is not always optimal for player 1).
} \end{Remark}

\begin{proof}[Proof of Lemma \ref{techlemma}] $~~$\pa
1.1) Fix $0\leq t<t'<s\leq T$ and  let us assume temporarily that $m$ has compact support. Set $m_s=p^{t,m}_s$ and $m_{t'}=p^{t,m}_{t'}$.
\pa
Recall that 
\[\frac{\partial}{\partial s} (\rho^{t,m}_s(x))=\frac{1}{2}\Delta \rho^{t,m}_s(x)\]
We have the following chain of equalities:
\begin{align*}
\varphi(s,m_s)-\varphi(s,m_{t'})&=
 \int_0^1\int_{\R^d}\frac{\delta \varphi}{\delta m}\left(s, (1-r)m_{t'}+rm_s,x\right)d(m_s-m_{t'})(x)dr\\
&= \int_0^1\int_{\R^d}\frac{\delta \varphi}{\delta m}\left(s, (1-r)m_{t'}+rm_s,x\right)(\rho^{t,m}_s-\rho^{t,m}_{t'})(x)dx dr \\
&= \int_0^1\int_{\R^d}\frac{\delta \varphi}{\delta m}\left(s, (1-r)m_{t'}+rm_s,x\right)\left(\int_{t'}^s\partial_\tau\rho^{t,m}_\tau(x)d\tau \right) dx dr\\
&= \int_{t'}^s\int_0^1\int_{\R^d}\frac{\delta \varphi}{\delta m}\left(s,(1-r)m_{t'}+rm_s,x\right)\partial_\tau\rho^{t,m}_\tau(x) dxdrd\tau\\
&=\int_{t'}^s\int_0^1\int_{\R^d}\frac{\delta \varphi}{\delta m}\left( s,(1-r)m_{t'}+m_s,x\right)\frac{1}{2}\Delta \rho^{t,m}_\tau(x) dxdrd\tau\\
&=\frac{1}{2}\int_{t'}^s\int_0^1\int_{\R^d}\di[D_m\varphi]\left( s,(1-r)m_{t'}+rm_s,x\right) dp^{t,m}_\tau(x) drd\tau.
\end{align*}
Let us justify the above computations. The first equality follows from \eqref{eqprimitive}. The second equality follows from the definition of $m_s,m_{t'}$. The fourth equality follows from Fubini's theorem, which we may apply since $\frac{\delta \varphi}{\delta m}$ has at most quadratic growth in $x$, uniformly in $(t,m)$, and -thanks to the fact that $m$ has compact support- $\int_{\R^d} (1+|x|^2) |\frac{\partial}{\partial s} (\rho^{t,m}_s(x))| dx <\infty$. The sixth and last equality follows from the integration by part formula which we may apply thanks to the growth assumptions on $\frac{\delta \varphi}{\delta m}$ and its derivatives.
\pa 
We proved that, for $m$ with compact support,
\be \label{eqderiv1} 
\varphi(s,m_s)-\varphi(s,m_{t'})=\frac{1}{2}\int_{t'}^s\int_0^1\int_{\R^d}\di[D_m\varphi]\left(s,(1-r)m_{t'}+rm_s,x\right) dp^{t,m}_\tau(x) drd\tau .
\ee
1.2) 
Let us generalize this for an arbitrary $m\in\PR_2$: any $m \in \PR_2$ is the $d_2$-limit of a sequence $(m^n)$ of measures with finite (hence compact) support. 
Following Lemma \ref{dp}, this implies that, for any $\tau\in[t',s]$, $p^{t',m^n}_\tau$ $d_2$-converges to $p^{t',m}_\tau$ and therefore, denoting $m^n_s=p^{t,m^n}_s$:
\begin{equation}
\label{varphit}
 \varphi(s,m^n_s)-\varphi(s,m^n_{t'})\rightarrow \varphi(s,m_s)-\varphi(s,m_{t'}).
 \end{equation}
Further, the $d_2$-convergence of $(m^n)$  implies that their second order moments are bounded and uniformly integrable and that the sequence of measures is tight, i.e.
\[  \sup_n \int_{\R^d} |x|^2 dm^n(x)<\infty, \; \sup_n \int_{|x|> K} (1+|x|^2) dm^n(x) \underset{K \rightarrow \infty}{\longrightarrow} 0.\]
Moreover, we also have 
\[ \sup_{\tau \in (t,s]} \int_{|x|> K} (1+|x|^2) dp^{t,\delta_0}_{\tau}(x)= \int_{|x|> K} (1+|x|^2) dp^{t,\delta_0}_{s}(x) \underset{K \rightarrow \infty}{\longrightarrow} 0.\]
Thus, for any $\varepsilon>0$, we can find $K$ sufficiently large such that, for all $\tau \in (t,s]$ and all $n\in\N$,
\begin{align*}
\int_{|x|> K} (1+|x|^2) dp^{t,m^n}_{\tau}(x)&=\int_{\R^d}\int_{\R^d}\ind_{|x+y|>K}  (1+|x+y|^2)\rho_{\tau-t}(y)dy dm^n(x) \\ 
&\leq \int_{\R^d}\int_{\R^d} (\ind_{|y|> K/2}+ \ind_{|x|>K/2})(1+2|x|^2+2|y|^2)\rho_{\tau-t}(y)dy dm^n(x),\\
&\leq \varepsilon,
\end{align*}
as well as
$\int_{|x|> K} (1+|x|^2) dp^{t,m}_{\tau}(x)\leq \varepsilon$.\\
For this $K>0$, we have, for some constant $C'$,
\begin{equation}
\begin{array}{rl}
\label{mn}
|\frac{1}{2}\int_{t'}^s\int_0^1\int_{\R^d}\di[D_m\varphi]&\left( s,(1-r)m^n_{t'}+rp^{t,m^n}_s,x\right) d(p^{t,m}_\tau-p^{t,m^n}_{\tau})(x) dr d\tau| \\ \leq &C'\Big(
\int_{t'}^s\int_0^1\int_{|x|> K} (1+|x|^2) d(p^{t,m}_{\tau}(x)+p^{t,m^n}_{\tau})(x) dr d\tau\\
&\qquad\qquad+
\int_{t'}^s\int_0^1
\int_{|x|\leq K}(1+|x|^2) |\rho^{t,m}_\tau(x)-\rho^{t,m^n}_{\tau}(x)|dxdr d\tau\Big)\\
\leq& 2C'T\varepsilon+C'\int_{t'}^s\int_0^1
\int_{|x|\leq K}(1+|x|^2) |\rho^{t,m}_\tau(x)-\rho^{t,m^n}_{\tau}(x)|dxdr d\tau.
\end{array}
\end{equation}
By dominated convergence, the last term of \eqref{mn} converge to zero as $n$ goes to infinity, and we conclude that
\[ 
|\frac{1}{2}\int_{t'}^s\int_0^1\int_{\R^d}\di[D_m\varphi]\left( s,(1-r)m^n_{t'}+rp^{t,m^n}_s,x\right) d(p^{t,m}_\tau-p^{t,m^n}_{\tau})(x) dr d\tau|\underset{n\rightarrow}{\longrightarrow} 0.\]

\pa Using dominated convergence, we also have 
\begin{multline*}
\frac{1}{2}\int_{t'}^s\int_0^1\int_{\R^d}\di[D_m\varphi]\left( s,(1-r)m^n_{t'}+rp^{t,m^n}_s,x\right) dp^{t,m}_{\tau}(x) dr d\tau \\ \underset{n \rightarrow \infty}{\longrightarrow} \frac{1}{2}\int_{t'}^s\int_0^1\int_{\R^d}\di[D_m\varphi]\left( s,(1-r)m_{t'}+rp^{t,m}_s,x\right) dp^{t,m}_{\tau}(x)dr d\tau.
\end{multline*}

 \pa These two limits together with \eqref{varphit} imply that \eqref{eqderiv1} holds true also for any $m \in \PR_2$.

\pa
1.3) The next step is to prove that \eqref{eqderiv1} still holds true for $t'=t$, i.e.: 
\be \label{eqderiv2} 
\varphi(s,m_s)-\varphi(s,m)=\frac{1}{2}\int_{t}^s\int_0^1\int_{\R^d}\di[D_m\varphi]\left(s,(1-r)m+rm_s,x\right) dp^{t,m}_\tau(x) drd\tau .
\ee
By Lemma \ref{dp},  $\tau\mapsto p^{t,m}_\tau$ is continuous from $[t,T]$ to $(\PR_2,d_2)$. And by (A3), $\varphi$ and $\di[D_m\varphi]$ are $d_2$-continuous. Therefore, letting tend $t'$ to $t$ in \eqref{eqderiv1} , we obtain \eqref{eqderiv2} (the right hand side converges by dominated convergence).\\

\pa  1.4) Finally we shall divide \eqref{eqderiv2} by $s-t$ and let tend $s$ to $t$.\\
Since $\di[D_m\varphi](t,m,.)$ has at most quadratic growth and $\tau \mapsto p^{t,m}_{\tau}$ is continuous from $[t,s]$ to $(\PR_2,d_2)$, 
\[ \tau \mapsto \int_{\R^d}\di[D_m\varphi](t,m,x)dp^{t,m}_{\tau}\]
is continuous. This implies that
\[\lim_{s\searrow t} \frac{1}{s-t}\int_{t}^s \int_{\R^d}\di[D_m\varphi](t,m,x)dp^{t,m}_{\tau}(x)d\tau = \int_{\R^d}\di[D_m \varphi](t,m,x)dm(x).\]
Fix $\varepsilon>0$. As for \eqref{mn},using that $\di[D_m\varphi]$ has at most quadratic growth in $
x$ uniformly in $(t,m)$, we can find $K>0$ such that, for all $\tau \in (t,s]$,
\[ \int_{|x|>K}\Big|\di[D_m\varphi]\left( s,(1-r)m+rm_s,x\right)-\di[D_m\varphi]\left( t,m,x\right)\Big|dp^{t,m}_{\tau}(x)\leq\varepsilon/2.\]
Further, by assumption (A3), $\di[D_m \varphi]$ is $d_2$-continuous. Therefore
we can choose $s-t$ sufficiently small so that for any $r\in[0,1]$ and $x\in\R^d$, with $|x|\leq K$,
\[
 |\di[D_m\varphi]\left( s,(1-r)m+rm_s,x\right)-\di[D_m\varphi]\left( t,m,x\right)| \leq \varepsilon/2.
\]
in order to get
\[ \int_0^1\int_{\R^d}|\di[D_m\varphi]\left( s,(1-r)m+rm_s,x\right)-\di[D_m\varphi]\left( t,m,x\right)|dp^{t,m}_{\tau}(x)d\tau\leq\varepsilon.\]
Since $\varepsilon$ can be taken arbitrarily small, it derives from \eqref{eqderiv2} that
\[
\lim_{s\searrow t}\frac{\varphi(s,m_s)-\varphi(s,m)}{s-t}=
\frac{1}{2}\int_{\R^d}\di[D_m \varphi](t,m,x)dm(x).\]
Writing $\varphi(s,m_s)-\varphi(t,m)=\varphi(s,m_s)-\varphi(s,m) +\varphi(s,m)-\varphi(t,m)$ and recalling that
\[
\lim_{s\searrow t}\frac{\varphi(s,m)-\varphi(t,m)}{s-t}=\partial_t \varphi(t,m),\]
we can conclude.
\pa
2.1) The function $\varphi$ defined in \eqref{eqphi} is solution of \eqref{eq3}:\\
Remark that, for all $m,m'\in\PR_2$ and $r\in[0,1]$ and $s\in [t,T]$, we have the relation 
\[ p^{t,(1-s)m+sm'}_r=(1-s)p^{t,m}_r+sp^{t,m'}_r.\]
%(because $((1-r)p^{t,m}_s+rp^{t,m'}_s,s\in[t,T])$ is solution of the same relations (\ref{dens}) and (\ref{heat}) with the same initial condition as $(p^{t,(1-s)m+sm'}_s, s\in[t,T])$.)\\
We have therefore:
\begin{align*} 
\frac{1}{r}\left(\varphi(t,(1-r)m+rm')-\varphi(t,m)\right)
&= \frac{1}{r}\left(\psi((1-r)p^{t,m}_{t_1}+rp^{t,m'}_{t_1})-\psi(p^{t,m}_{t_1})\right) \\ &\qquad +\int_t^{t_1} \frac{1}{r}\left(F(s,(1-r)p^{t,m}_{s}+rp^{t,m'}_{s})-F(s,p^{t,m}_s)\right)ds
\end{align*}
Using that $\psi$ satisfies $(A2)$ and that $F$ satisfies $(A1)$, together with bounded convergence and Fubini's theorem, we deduce that $\frac{\delta\varphi}{\delta m}$ exists and is given by:
\[  \frac{\delta\varphi}{\delta m}(t,m,x)=\int_{\R^d}\rho_{t_1-t}(y)\left(\frac{\delta\psi}{\delta m}(p^{t,m}_{t_1},x+y) +\int_t^T \frac{\delta F}{\delta m} (s,p^{t,m}_s,x+y)ds\right)dy.\]
We deduce easily from the assumptions on $\psi,F$ that $\varphi$ satisfies $(A3)$.
\pa
It follows from point $1)$ that:
\[
\lim_{s\searrow t}\frac{\varphi(s,m_s)-\varphi(t,m)}{s-t}=\partial_t\varphi(t,m)+
\frac{1}{2}\int_{\R^d}\di[D_m \varphi](t,m,x)dm(x).\]
Further, from the semigroup property (\ref{Markov}), we get
\[
 \varphi(s,m_s)=\psi(p_{t_1}^{s,m_s}) + \int_s^T F(r,p^{s,m_s}_r)dr=\psi(p^{t,m}_{t_1}) + \int_s^T F(r,p^{t,m}_r) dr,\; s\in[ t,T].
 \]
 This implies that
\[
\varphi(s,m_s)-\varphi(t,m)=-\int_t^s F(r,p^{t,m}_r)dr,
\]
and thus:
\[ \lim_{s\searrow t}\frac 1{s-t}\left( \varphi(s,m_s)-\varphi(t,m)\right)=-F(t,m),\]
which implies that $\varphi$ is a solution of (\ref{eq3}).\\

\pa 2.2) The function $\varphi$ is the unique solution of \eqref{eq3}:\\
Let $\varphi'$ be an arbitrary solution of (\ref{eq3}). It follows from point $1)$ together with the semigroup property (\ref{Markov}) that the map
\[ s \in [t,T] \mapsto \varphi'(s,p^{t,m}_s) \in \R,\]
is differentiable on $(t,T)$ with derivative $\partial_t \varphi'(s,p^{t,m}_s)+\frac{1}{2}\int_{\R^d}\di [D_m \varphi'](s,p^{t,m}_s,y)\rho^{t,m}_s(y) d(y)$.
Using dominated convergence, it is even $C^1$ on $(t,T)$, and we may apply the fundamental theorem of calculus to obtain 
 \[\varphi'(t_1,p^{t,m}_{t_1}) -\varphi'(t,m)=\int_t^{t_1}\left(\partial_t \varphi'(s,p^{t,m}_s)+\frac{1}{2}\int_{\R^d}\di [D_m \varphi'](s,p^{t,m}_s,y)dp^{t,m}_s(y)\right)ds\]
Then
\[\begin{array}{rl}
\varphi'(t,m)=& \varphi'(t,m)-\varphi'(t_1,p^{t,m}_{t_1})+ \psi(t_1,p^{t,m}_{t_1})\\
=&-\int_t^{t_1}\left(\partial_t \varphi'(s,p^{t,m}_s)+\frac{1}{2}\int_{\R^d}\di [D_m \varphi'](s,p^{t,m}_s,y)dp^{t,m}_s(y)\right)ds+ \psi(t_1,p^{t,m}_{t_1})\\
=&\int_t^{t_1} F(s,p^{t,m}_s)ds+ \psi(t_1,p^{t,m}_{t_1}),
\end{array}\]
i.e. $\varphi'(t,m)=\varphi(t,m)$.
\end{proof}

% \pa We know already that $V$ is continuous and convex in $m$. It remains to show that $V$ is a subsolution of (\ref{eqH}) and that any subsolution of (\ref{eqH}) satisfying the assumptions of Theorem \ref{bigsub} is smaller than $V$.

 \pa Here is the a first application of Lemma \ref{techlemma}:
 
 \begin{Proposition}
The value function  $V$ is a subsolution of (\ref{eqH}).
\end{Proposition}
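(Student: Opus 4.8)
The plan is to establish the subsolution inequality directly from the dynamic-programming structure of the control problem defining $V$, using the no-revelation (constant-belief) strategy $M_s \equiv p^{t,m}_s$ as a competitor in the infimum, and then transferring the infinitesimal estimate to an arbitrary admissible test function via part 1) of Lemma~\ref{techlemma}. First I would record the sub-dynamic-programming principle for $V$: for every $(t,m)\in[0,T)\times\PR_2$ and every $s\in(t,T]$ one has
\[
V(t,m)\le \inf_{M\in\MR(t,m)}\E\Big[\int_t^s H(r,M_r)\,dr + V(s,M_s)\Big],
\]
which follows by concatenating an arbitrary $M\in\MR(t,m)$ on $[t,s]$ with (nearly) optimal continuations in $\MR(s,M_s)$ chosen measurably in the (random) endpoint $M_s$ — this measurable-selection/pasting argument is the same kind used in the proof of the convexity of $V$ and in Proposition~\ref{splitting}, and one checks the pasted process lies in $\MR(t,m)$ exactly as there. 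In particular, specializing to the deterministic choice $M_r=p^{t,m}_r$ (which is admissible since it satisfies $(i)$ and $(ii)$ trivially, using the Markov property \eqref{Markov}) gives the scalar inequality
\[
V(t,m)\le \int_t^s H(r,p^{t,m}_r)\,dr + V(s,p^{t,m}_s),\qquad s\in(t,T].
\]

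Next I would use this to verify the viscosity-subsolution inequality \eqref{testf} with $F=H$. Let $\varphi$ satisfy $(A3)$ and suppose $\varphi-V$ has a local minimum (for $d_1$) at $(t,m)$ with $t<T$; without loss of generality $\varphi(t,m)=V(t,m)$, and then $\varphi(s,m')\ge V(s,m')$ for all $(s,m')$ in a $d_1$-neighbourhood of $(t,m)$. Since by Lemma~\ref{dp} the curve $s\mapsto p^{t,m}_s$ is $d_1$-continuous and starts at $m$, for $s$ close to $t$ the point $(s,p^{t,m}_s)$ lies in that neighbourhood, so
\[
\varphi(s,p^{t,m}_s)-\varphi(t,m)\ge V(s,p^{t,m}_s)-V(t,m)\ge -\int_t^s H(r,p^{t,m}_r)\,dr.
\]
Dividing by $s-t>0$ and letting $s\searrow t$: the left-hand side tends to $\partial_t\varphi(t,m)+\tfrac12\int_{\R^d}\di[D_m\varphi](t,m,x)\,m(dx)$ by part~1) of Lemma~\ref{techlemma}, while the right-hand side tends to $-H(t,m)$ by continuity of $r\mapsto H(r,p^{t,m}_r)$ (Lemma~\ref{HLipsch} together with Lemma~\ref{dp}). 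This yields
\[
\partial_t\varphi(t,m)+\frac12\int_{\R^d}\di[D_m\varphi](t,m,x)\,m(dx)+H(t,m)\ge 0,
\]
which is precisely \eqref{testf}. Upper semicontinuity of $V$ holds because $V$ is in fact continuous (established elsewhere), and $V(T,\cdot)=0$ by definition, so $V$ is a subsolution of \eqref{eqH}.

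The main obstacle I expect is the careful justification of the sub-dynamic-programming inequality for $V$: one must paste a measurable family $\{N^{m'}\}_{m'\in\PR_2}$ of $\varepsilon$-optimal processes $N^{m'}\in\MR(s,m')$ onto a given $M$ at the random time $s$ and verify that the resulting concatenation is still in $\MR(t,m)$ — in particular that property $(ii)$ holds across the junction, which requires that the continuation started from $M_s$ be conditionally independent of $\FR^M_s$ given $M_s$, obtained by enlarging the probability space and using the Blackwell–Dubins selection as in Theorem~\ref{dubinsapp}/Proposition~\ref{splitting}. Everything else is routine: the passage to the limit $s\searrow t$ is handed to Lemma~\ref{techlemma}, and the compatibility of the local minimum with the $d_1$-topology is exactly what the definition of subsolution was set up to allow. \QED
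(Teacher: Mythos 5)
Your argument is correct and, at its core, is the same as the paper's: use the non-revealing trajectory $r\mapsto p^{t,m}_r$ as a competitor to get the scalar inequality $V(t,m)\le \int_t^s H(r,p^{t,m}_r)\,dr + V(s,p^{t,m}_s)$, then feed it to a test function at a local minimum of $\varphi-V$ and invoke part 1) of Lemma~\ref{techlemma}. The one genuine difference is that you reach this scalar inequality by first asserting a full sub-dynamic-programming principle over all of $\MR(t,m)$ and then specializing, and you correctly identify the measurable pasting of $\varepsilon$-optimal continuations $\{N^{m'}\}$ at the random endpoint $M_s$ as the main obstacle. That obstacle is avoidable: in the only case you actually use, the prefix is the deterministic curve $p^{t,m}_r$, so the endpoint $p^{t,m}_s$ is a single fixed measure and one needs only \emph{one} $\epsilon$-optimal $\bar M\in\MR(s,p^{t,m}_s)$, concatenated with the deterministic prefix; checking that the concatenation lies in $\MR(t,m)$ then reduces to the Markov property \eqref{Markov} and the triviality of $\FR^{M'}_{t_1}$ for $t_1<s$, with no measurable selection and no conditional-independence issue at the junction. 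This is exactly what the paper does, and it is the cleaner route; your more general sub-DPP would require additional work (of the Blackwell--Dubins type) that the proposition does not need. The remainder of your argument — admissibility of the neighbourhood via Lemma~\ref{dp}, the passage to the limit $s\searrow t$ via Lemma~\ref{techlemma}, and the continuity of $r\mapsto H(r,p^{t,m}_r)$ — matches the paper.
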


\begin{proof}
Fix $t\leq s\leq T$ and $m\in\PR$.\\
For $\epsilon>0$, let $\bar M=(\bar M_r)_{r\in[s,T]}\in\MR(s,p^{t,m}_s)$ be $\epsilon$-optimal for $V(s,p^{t,m}_s)$.
We define another process $M'$ on the time interval $[t,T]$ by
\[ M'_r=\left\{\begin{array}{ll}
p^{t,m}_r,& \mbox{ if } r\in[t,s),\\
\bar M_r,& \mbox{ if }r\in[s,T].
\end{array}
\right.\]
Let us check that $(M'_r)_{r\in[t,T]}$ belongs to $\MR(t,m)$:
\begin{itemize}
\item It is clear that $\E[M'_t]=m$.
\item To prove that $M'$ satisfies item {\em (ii)} of the definition, let $\phi:\R^d\rightarrow\R$ be continuous and bounded. 
\begin{itemize}
\item  $r\mapsto  M'_r(\phi)$ is c\`adl\`ag,
\item for $s\leq t_1\leq t_2\leq T$,
\[\begin{array}{rl}
\E[M'_{t_2}(\phi)|\FR^{M'}_{t_1}]=&
\E[\bar M_{t_2}(\phi)|\FR^{\bar M}_{t_1}]\\
=&p^{t_1,\bar M_{t_1}}_{t_2}(\phi)=p^{t_1,M'_{t_1}}_{t_2}(\phi).
\end{array}\]
\item If $t\leq t_1< s\leq t_2\leq T$, 
since $\FR^{M'}_{t_1}=\FR^{p^{t,m}}_{t_1}$ is trivial, we have
\begin{align*}
\E[M'_{t_2}(\phi)|\FR^{M'}_{t_1}]=&\E[\bar M_{t_2}(\phi)]\\
=&p^{s,p^{t,m}_s}_{t_2}(\phi) \mbox{ (by \eqref{propM3})}\\
=&p^{t,m}_{t_2}(\phi)=p^{t_1,p^{t,m}_{t_1}}_{t_2}(\phi) \mbox{ (by the semigroup property \eqref{Markov})}\\
=&p^{t_1,M'_{t_1}}_{t_2}(\phi).
\end{align*}

 \item If $t\leq t_1\leq t_2< s$,  we have simply
 \[\begin{array}{rl}
\E[M'_{t_2}(\phi)|\FR^{M'}_{t_1}]=&
p^{t_1,p^{t,m}_{t_1}}_{t_2}(\phi)=p^{t_1,M'_{t_1}}_{t_2}(\phi).
\end{array}\]
 \end{itemize}
 \end{itemize}

 \pa We can write now
\[\begin{array}{rl}
V(t,m)=&\inf_{M\in\MR(t,m)}\E\left[\int_t^TH(r,M_r)dr\right]\\
\leq &\E\left[\int_t^TH(r,M'_r)dr\right]=\int_t^sH(r,p^{t,m}_r)dr+\E\left[\int_s^TH(r,\bar M_r)dr\right]\\
\leq & \int_t^sH(r,p^{t,m}_r)dr+V(s,p^{t,m}_s)+\epsilon.
\end{array}\]
This inequality holds also without $\epsilon$, since it can been taken arbitrarily small.\\
Now let $\varphi$ be a test function for $V$ at $(t,m)$, i.e. such that $\varphi-V$
has a local minimum at $(t,m)$. Suppose for instance that, for some $\epsilon\in(0,1)$, $(t,m)$ is the minimum over all $(s,m')$ such that $\max(|s-t|,d_1(m',m))\leq\epsilon$. Then by relation (\ref{pcont}), we have, for all $s\in[0,T]$ such that $s-t\leq\epsilon^2/4d^2$, 
\[\begin{array}{rl}
\varphi(s,p^{t,m}_s)-\varphi(t,m)\geq & V(s,p^{t,m}_s)-V(t,m)\\
\geq & -\int_t^sH(r,p^{t,m}_r)dr.
\end{array}\]
Finally, using point $1)$ of Lemma \ref{techlemma}, we just have to divide by $s-t$ and let $s$ tend to $t$ to obtain
\[ \partial_t\varphi(t,m)+\frac{1}{2}\int_{\R^d}\di [D_m\varphi (t,m,x)]dm(x)  +H(t,m)\geq 0.\]
\end{proof}

\pa We already know that $V$ is continuous and convex in $m$. The next step is to prove that $V$ is smaller than any other bounded, continuous, convex subsolution of (\ref{eqH}). To this aim, we need two lemmas. The first is a comparison theorem between a solution and a subsolution of \eqref{eq3}.

\begin{Lemma}
\label{clas}
Let $F:[0,T]\times\PR_2\rightarrow\R$ be such that assumptions $(A1)$ hold true and let $\psi:\PR_2\rightarrow\R$  satisfy $(A2)$. 
Let $t_1\in[0,T]$, and define $\phi(t,m)=\psi(p^{t,m}_{t_1})+\int_t^{t_1}F(r,p^{t,m}_r)dr$. 
\pa
Let $U$ be a bounded, continuous subsolution of (\ref{eq3}) which satisfies, for all $m\in\PR_2$, $U(t_1,m)\leq \psi(m)$. Then $U\leq \phi$ on $[0,t_1]\times\PR_2$.
\end{Lemma}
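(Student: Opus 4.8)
The plan is to argue by contradiction, combining the penalization by $\epsilon|m|_2^2$ announced in Remark~\ref{remarktestfunctions} with a parabolic penalization $\lambda(t_1-t)$. First I would recall that, by part~2) of Lemma~\ref{techlemma}, the map $\phi$ is precisely the classical solution of \eqref{eq3}: it is continuous, satisfies $(A3)$, and
\[ \partial_t\phi(t,m)+\frac{1}{2}\int_{\R^d}\di[D_m\phi](t,m,x)m(dx)+F(t,m)=0,\qquad (t,m)\in[0,t_1)\times\PR_2. \]
Suppose, for contradiction, that $U(\bar t,\bar m)>\phi(\bar t,\bar m)$ for some $(\bar t,\bar m)\in[0,t_1]\times\PR_2$, and set $\eta:=U(\bar t,\bar m)-\phi(\bar t,\bar m)>0$. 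I would fix $\lambda\in(0,\eta/(2T))$, then $\epsilon\in(0,\lambda/d)$ small enough that $\epsilon|\bar m|_2^2<\eta/2$, and consider
\[ w(t,m):=U(t,m)-\phi(t,m)-\lambda(t_1-t)-\epsilon|m|_2^2=:U(t,m)-\tilde\varphi(t,m). \]
Since $\lambda(t_1-\bar t)\leq\lambda T<\eta/2$ and $\epsilon|\bar m|_2^2<\eta/2$, we get $w(\bar t,\bar m)>\eta-\eta/2-\eta/2=0$, so $\sup w>0$.

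Second, I would prove that this supremum is attained. Since $U$ and $\phi$ are continuous and $m\mapsto|m|_2^2$ is $d_1$-lower semicontinuous (point~4 of Lemma~\ref{dro}), $w$ is $d_1$-upper semicontinuous on $[0,t_1]\times\PR_2$. Moreover $U$ is bounded, and since $\psi$ has at most linear growth and $|p^{t,m}_{t_1}|_1\leq|m|_1+\sqrt{dT}$ (coupling, as in the proof of Lemma~\ref{dp}), one has $\phi(t,m)\geq -C(1+|m|_1)$ for some constant $C$; using $|m|_1\leq|m|_2$ this yields $w(t,m)\leq C_0+C_1|m|_2-\epsilon|m|_2^2$, which tends to $-\infty$ as $|m|_2\to\infty$, uniformly in $t$. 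Hence for $R$ large enough $w<0\leq\sup w$ outside $[0,t_1]\times\BR_2(R)$, and since $[0,t_1]\times\BR_2(R)$ is $d_1$-compact (point~3 of Lemma~\ref{dro}), the supremum of $w$ is attained at some $(t^*,m^*)$. As $w(t_1,m)\leq-\epsilon|m|_2^2\leq 0<\sup w$ for every $m$, we must have $t^*\in[0,t_1)$.

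Third, I would use $(t^*,m^*)$ as a contact point. Writing $\psi_0(m):=|m|_2^2$, the function $\tilde\varphi=\phi+\lambda(t_1-\cdot)+\epsilon\psi_0$ satisfies $(A3)$: $\phi$ does by Lemma~\ref{techlemma}, the term $\lambda(t_1-t)$ is affine in $t$ and independent of $m$, and $\psi_0$ does by Remark~\ref{remarktestfunctions}, with $\frac{\delta\psi_0}{\delta m}(m,x)=|x|^2-|m|_2^2$, hence $D_m\psi_0(m,x)=2x$ and $\di[D_m\psi_0](m,x)=2d$. Since $w=U-\tilde\varphi$ has a global, hence local, maximum at $(t^*,m^*)$, the map $\tilde\varphi-U$ has a local minimum there, so the subsolution property of $U$ at $(t^*,m^*)\in[0,t_1)\times\PR_2$ gives
\[ \partial_t\tilde\varphi(t^*,m^*)+\frac{1}{2}\int_{\R^d}\di[D_m\tilde\varphi](t^*,m^*,x)m^*(dx)+F(t^*,m^*)\geq 0. \]
On the other hand, using $\partial_t[\lambda(t_1-t)]=-\lambda$, $\di[D_m\psi_0]\equiv 2d$, and the identity for $\phi$ above, the left-hand side equals
\[ \Big(\partial_t\phi(t^*,m^*)+\frac{1}{2}\int_{\R^d}\di[D_m\phi](t^*,m^*,x)m^*(dx)+F(t^*,m^*)\Big)-\lambda+\epsilon d=\epsilon d-\lambda<0, \]
a contradiction. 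Therefore $U\leq\phi$ on $[0,t_1]\times\PR_2$.

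The main obstacle is the non-compactness of $(\PR_2,d_1)$: the term $\epsilon|m|_2^2$ is exactly what makes $w$ coercive, so that its supremum is reached on a $d_1$-compact ball, and this is the reason why the class $(A3)$ of test functions was designed to admit functions that are only $d_2$-continuous (and merely $d_1$-lower semicontinuous), such as $m\mapsto|m|_2^2$. The term $\lambda(t_1-t)$ plays the familiar parabolic role: it turns $\tilde\varphi$ into a strict supersolution of \eqref{eq3}, which both produces the strict inequality $\epsilon d-\lambda<0$ at the contact point and keeps the contact point away from the terminal time $t_1$, where $U\leq\phi$ is already known.
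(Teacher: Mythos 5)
Your proof is correct and follows essentially the same route as the paper: penalize $U-\phi$ by $\lambda(t_1-t)+\epsilon|m|_2^2$, use the boundedness of $U$, the linear growth of $\phi$ and the $d_1$-compactness of the balls $\{|m|_2\leq R\}$ to show the supremum is attained at some $(t^*,m^*)\in[0,t_1)\times\PR_2$, and then test the subsolution property of $U$ there against $\phi+\lambda(t_1-\cdot)+\epsilon|\cdot|_2^2$. Your explicit requirement $\epsilon<\lambda/d$ is in fact needed to obtain the final strict inequality $\epsilon d-\lambda<0$ (since $\di[D_m(\epsilon|m|_2^2)](m,x)=+2\epsilon d$), a point at which the paper's computation carries a sign slip, so this added precision is welcome.
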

 
\begin{proof} 
Let us suppose  that the assertion is wrong. Then we can find some $\gamma>0$ and $\epsilon>0$ such that
\begin{equation}
\label{supsub}
\sup_{(t,m)\in[0,t_1]\times\PR_2}\left( U(t,m)-\phi(t,m)-\gamma(t_1-t)-\epsilon|m|^2_2\right):=S>0
\end{equation}
Let $(t^n,m^n)$ be a maximizing sequence. We can extract a subsequence, without changing the notation, such that $(t^n)\rightarrow\bar t<t_1$.
\\
Further, from the assumption on $\psi$ and $F$, it follows that $\phi$ has almost linear growth in $m$ uniformly in $t$: there exists $C'>0$ such that, for all $(t,m)$,
\[ |\phi(t,m)|\leq C'(1+|m|_1).\]
Now, since $U$ is bounded and $|m|_1\leq |m|_2$, no term can compensate $\epsilon|m|_2^2$, i.e. the maximizing sequence has to be bounded :
there is some $K>0$ such that, for $n$ sufficiently large, $|m^n|_2^2\leq K$.\\
It follows from point 3 in Lemma \ref{dro} that there exists a subsequence that  converges to some $\bar m\in\PR_2$. \\
%Now the map $m\mapsto |m|^2_2$ is lower semicontinuous (l.s.c.) for the weak$^*$-topology, which, firstly implies that $\bar m\in\PR_2$. 
%Secondly, 
Since $U$ is upper semi-continuous by assumption, and $\phi$ is l.s.c. because $m\mapsto |m|^2_2$ is lower semi-continuous and $\psi$ is continuous, it follows that the supremum in (\ref{supsub}) is a maximum attained at $(\bar t,\bar m)$ with
\[ U(\bar t,\bar m)-\phi(\bar t,\bar m)-\gamma(t_1-\bar t)-\epsilon|\bar m|_2^2= S>0.\]
Set $\varphi(t,m):=\phi(t,m)+\gamma(t_1-t)+\epsilon|m|^2_2$.\\
We have
\[ U(\bar t,\bar m)-\varphi(\bar t,\bar m)=\max_{(t,m)\in[0,t_1]\times\PR_2}\left(U(t,m)-\varphi(t,m)\right).\]
Moreover $\varphi$ satisfies the regularity assumptions (A3) ( see Lemma \ref{techlemma} and Remark \ref{remarktestfunctions}).
Therefore the criterion for subsolutions applies:
\[ \frac{\partial\varphi}{\partial t}(\bar t,\bar m)+\frac{1}{2} \int_{\R^d}\di D_m\varphi(\bar t ,\bar m, x)d\bar m(x)+F(\bar t,\bar m)\geq 0.\]
But \[\frac{\partial\varphi}{\partial t}(\bar t,\bar m)=\frac{\partial\phi}{\partial t}(\bar t,\bar m)-\gamma\]
and
\[ \di D_m\varphi(\bar t,\bar m,x)=\di D_m\phi(\bar t,\bar m,x)-2d\epsilon.\]
This, together with (\ref{eqphi}) leads to the impossible relation
\[ -\gamma-d\epsilon\geq 0.\]
 \end{proof}

 \pa The following construction shall be crucial in the elaboration of a special test function for \eqref{eq3}.

\begin{Lemma}
\label{g}
Let $m_1\in\PR_2$ and $\delta>0$ be fixed. For all $m\in\PR_2$, we set
\[ \psi_\delta(m)= \int_{\R^d}\sqrt{\delta e^{-|x|^2}+|x|^2\left(\rho_\delta*m(x)-\rho_\delta*m_1(x)\right)^2}dx-(2\pi)^{\frac{d}{2}}\sqrt\delta.\]
Then 
\begin{enumerate}
\item $\psi_{\delta}\geq 0$ and $\psi_\delta(m_1)=0$.
\item $\psi_\delta$ satisfies $(A2)$.
\item for any $\nu>( (2\pi)^{\frac{d}{2}}+2\sqrt d)\sqrt{\delta}$, there exists $\alpha>0$ such that $d_1(m,m_1)\geq \nu\Rightarrow \psi_\delta\geq \alpha$.
\end{enumerate}
\end{Lemma}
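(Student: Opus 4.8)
The plan is to treat the three assertions separately: items~1 and~3 reduce to elementary estimates, while item~2 amounts to computing the flat derivative of $\psi_\delta$ in the sense of \cite{CDLL} together with its spatial derivatives up to order two. Items~1 and~3 both rest on the pointwise inequality
\[ \sqrt{\delta e^{-|x|^2}+|x|^2(\rho_\delta*m(x)-\rho_\delta*m_1(x))^2}\ \ge\ \max\big(\sqrt{\delta}\,e^{-|x|^2/2},\ |x|\,|\rho_\delta*m(x)-\rho_\delta*m_1(x)|\big) \]
and on the Gaussian identity $\int_{\R^d}e^{-|x|^2/2}\,dx=(2\pi)^{d/2}$. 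Integrating the first term in the maximum gives $\psi_\delta(m)\ge0$, and for $m=m_1$ the integrand collapses to $\sqrt\delta\,e^{-|x|^2/2}$, so $\psi_\delta(m_1)=0$; this is item~1. For item~3, integrating the second term instead gives $\psi_\delta(m)\ge\int_{\R^d}|x|\,|\rho_\delta*m(x)-\rho_\delta*m_1(x)|\,dx-(2\pi)^{d/2}\sqrt\delta$. Since $\rho_\delta*m$ and $\rho_\delta*m_1$ are the densities of $p^{0,m}_\delta$ and $p^{0,m_1}_\delta$ and both lie in $\PR_2$, Lemma~\ref{dro}(2) bounds this integral below by $d_1(\rho_\delta*m,\rho_\delta*m_1)$, and \eqref{compd} (with $s-t=\delta$) gives $d_1(\rho_\delta*m,\rho_\delta*m_1)\ge d_1(m,m_1)-2\sqrt{d\delta}$. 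Hence $\psi_\delta(m)\ge d_1(m,m_1)-((2\pi)^{d/2}+2\sqrt d)\sqrt\delta$, and for $\nu>((2\pi)^{d/2}+2\sqrt d)\sqrt\delta$ it suffices to take $\alpha:=\nu-((2\pi)^{d/2}+2\sqrt d)\sqrt\delta>0$.

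For item~2, write $\psi_\delta(m)=\int_{\R^d}G(x,\rho_\delta*m(x))\,dx-(2\pi)^{d/2}\sqrt\delta$ with $G(x,a)=\sqrt{\delta e^{-|x|^2}+|x|^2(a-\rho_\delta*m_1(x))^2}$, and observe that the argument of the square root is bounded below by $\delta e^{-|x|^2}>0$, so $G(x,\cdot)$ is smooth. The inequalities $\sqrt{u+v^2}\le\sqrt u+|v|$ and $|\sqrt{u+v^2}-\sqrt{u+w^2}|\le|v-w|$ for $u\ge0$, together with Fubini and the bound $\int_{\R^d}|x|(\rho_\delta*m)(x)\,dx\le|m|_1+C_2\sqrt\delta$ (where $C_2=\int_{\R^d}|x|\rho_1(x)\,dx$), give the linear growth $0\le\psi_\delta(m)\le|m|_1+|m_1|_1+2C_2\sqrt\delta$ and the Lipschitz-type bound $|\psi_\delta(m)-\psi_\delta(m')|\le\int_{\R^d}|x|\,|\rho_\delta*m(x)-\rho_\delta*m'(x)|\,dx$. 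From the latter, $d_1$-continuity follows: if $m^n\to m$ in $d_1$ then $\rho_\delta*m^n(x)\to\rho_\delta*m(x)$ for every $x$ and $\int_{\R^d}|x|(\rho_\delta*m^n)(x)\,dx\to\int_{\R^d}|x|(\rho_\delta*m)(x)\,dx$ (convergence of first moments), so Scheff\'e's lemma yields $\int_{\R^d}|x|\,|\rho_\delta*m^n-\rho_\delta*m|\,dx\to0$.

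For the flat derivative, use the affine dependence $\rho_\delta*((1-r)m+rm')=\rho_\delta*m+r\,\rho_\delta*(m'-m)$ and differentiate under the integral at $r=0^+$; this is legitimate because $|\partial_a G(x,a)|\le|x|$ (which is just $\sqrt{\delta e^{-|x|^2}+|x|^2\beta^2}\ge|x|\,|\beta|$), so after Fubini the integral $\int_{\R^d}\int_{\R^d}|x|\rho_\delta(x-z)\,dx\,|m'-m|(dz)<\infty$ provides a dominating function. One finds
\[ \frac{\delta\psi_\delta}{\delta m}(m,z)=\int_{\R^d}\partial_a G(x,\rho_\delta*m(x))\,\rho_\delta(x-z)\,dx+c(m),\qquad \partial_a G(x,a)=\frac{|x|^2(a-\rho_\delta*m_1(x))}{G(x,a)}, \]
with $c(m)$ the constant enforcing $\int_{\R^d}\frac{\delta\psi_\delta}{\delta m}(m,z)\,dm(z)=0$. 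The spatial derivatives $D_x\big(\frac{\delta\psi_\delta}{\delta m}\big)$ and $D^2_x\big(\frac{\delta\psi_\delta}{\delta m}\big)$ are obtained by letting the $z$-derivatives fall onto $\rho_\delta(x-z)$, e.g. $D_z\rho_\delta(x-z)=-\nabla\rho_\delta(x-z)$; the interchange is again justified by $|\partial_a G|\le|x|$ and the Gaussian decay of $\nabla\rho_\delta$ and $D^2\rho_\delta$, and it yields at most linear growth in $z$ (uniform in $m$) via bounds of the form $\int_{\R^d}|x|\,|\nabla\rho_\delta(x-z)|\,dx\le C_3(1+|z|)$, $C_3$ depending only on $\delta$ and $d$, and similarly for $D^2\rho_\delta$. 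Continuity of all these objects in $(m,z)$ follows from the continuity of $(x,a)\mapsto\partial_a G(x,a)$ and of $m\mapsto\rho_\delta*m(x)$ by (generalized) dominated convergence, which also handles $c(m)$. This establishes $(A2)$.

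The step I expect to be the real obstacle is item~2: confirming that $\psi_\delta$ is genuinely flat-differentiable in the sense of \cite{CDLL} and, above all, securing the growth control on the spatial derivatives. The technical crux is the single uniform bound $|\partial_a G(x,a)|\le|x|$, obtained simply by discarding the $\delta e^{-|x|^2}$ term under the square root in the denominator of $\partial_a G$: it makes every interchange of derivative and integral valid, and since the spatial variable $z$ enters $\frac{\delta\psi_\delta}{\delta m}$ only through the smooth, rapidly decaying factor $\rho_\delta(x-z)$, the $z$-derivatives automatically inherit the continuity and linear growth demanded by $(A2)$. (The normalizing constant $c(m)$, which is bounded by $|m|_1+C_2\sqrt\delta$, is the only quantity whose growth in $m$ fails to be uniform; this is harmless since $c(m)$ is annihilated by $D_x$ and hence plays no role in $D_m\psi_\delta$ nor in equation \eqref{eq3}.)
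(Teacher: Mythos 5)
Your proof is correct and follows essentially the same route as the paper: the same pointwise bounds $\sqrt{\delta}e^{-|x|^2/2}\leq\sqrt{\delta e^{-|x|^2}+|x|^2\beta^2}$ and $|x||\beta|\leq\sqrt{\delta e^{-|x|^2}+|x|^2\beta^2}$ for items~1 and~3 (combined with Lemma~\ref{dro}(2) and \eqref{compd}), and the same flat-derivative formula with the same crucial estimate $|\partial_aG(x,a)|\leq|x|$ for item~2. The only (harmless) variations are that you prove $d_1$-continuity via Scheff\'e's lemma where the paper uses a truncation/uniform-integrability argument, and that you track the normalization constant $c(m)$ explicitly, which the paper omits.
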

\begin{proof} 
1. The first item is obvious.\\
2. Let us show that $\psi_\delta$ is continuous.\\
For $(m^n,n\geq 1)$ and $m\in\PR_2$ such that $d_1(m^n,m)\rightarrow 0$, it holds that (see the proof of 1.2 in Lemma \ref{techlemma})
\[ \lim_{K\rightarrow\infty}\sup_n\int_{|x|\geq K}|x|\rho_\delta*m^n(x)dx=0.\]
Given $\epsilon>0$, let us choose $K$ such that
\[ \sup_n\int_{|x|\geq K}|x|\rho_\delta*m^n(x)dx\leq\epsilon \mbox{ and }
\int_{|x|\geq K}|x|\rho_\delta*m(x)dx\leq\epsilon.\]
As $m^n$ converge weakly to $m$, we have simple convergence of $\rho_\delta*m^n(x)$ to $\rho_\delta*m(x)$. Remark now that
$y\mapsto \sqrt{\delta e^{-|x|^2}+|y|^2}$ is Lipschitz, uniformly in $x$. It follows that for some constant $C'$
\[ \begin{array}{rl}
|\psi_\delta(m^n)-\psi_\delta(m)|\leq &C'\int_{\R^d}|x||\rho_\delta*m^n(x)-\rho_\delta*m(x)|dx\\
&\leq 2C'\epsilon+C'\int_{|x|\leq K}|x||\rho_\delta*m^n(x)-\rho_\delta*m(x)|dx.
\end{array}\]
The last term goes to zero by bounded convergence and the conclusion follows as $\epsilon$ was chosen arbitrarily.
\pa We also have 
\[\psi_\delta(m) \leq \int_{\R^d}|x||\rho_\delta*m(x)-\rho_\delta*m_1(x)| dx \leq |m|_1+|m_1|_1+2\sqrt{d\delta},\]
which proves the linear growth property required by $(A2)$.
\pa
Using dominated convergence, we obtain that
\[ \frac{\delta\psi_\delta}{\delta m}(m,y)= \int_{\R^d} \frac{|x|^2 (\rho_{\delta}*m(x)-\rho_{\delta}*m_1(x)) \rho_{\delta}(x-y)}{\sqrt{\delta e^{-|x|^2}+|x|^2\left(\rho_\delta*m(x)-\rho_\delta*m_1(x)\right)^2}}dx.\]
Using that 
\[\left|\frac{|x| (\rho_{\delta}*m(x)-\rho_{\delta}*m_1(x)}{\sqrt{\delta e^{-|x|^2}+|x|^2\left(\rho_\delta*m(x)-\rho_\delta*m_1(x)\right)^2}}\right|\leq 1,\]
it is easy to see that $\frac{\delta\psi_\delta}{\delta m}$ satisfy the regularity assumptions required in $(A2)$.\\
3. By Lemma \ref{dp} and Lemma \ref{dro} it holds that
\[\begin{array}{rl}
d_1(m,m_1)\leq & d_1(p^{0,m}_\delta,p^{0,m_1}_\delta)+2\sqrt{d\delta}\\
\leq & \int_{\R^d}|x||\rho_\delta*m(x)-\rho_\delta*m_1(x)|dx+2\sqrt{d\delta}\\
\leq & \psi_\delta(m)+((2\pi)^{\frac{d}{2}}+2\sqrt d)\sqrt{\delta}.
\end{array}\]
The result follows.
\end{proof}

\begin{Proposition}
\label{increq}
Suppose that $F$ satisfies the assumptions (A1). Let $U$ be a bounded, continuous subsolution of (\ref{eq1}). For all $(t_0,m_0)\in[0,T]\times\PR_2$, for all $s\in[t_0,T]$, we have
 \[ U(s,p^{t_0,m_0}_s)-U(t_0,m_0)+\int_t^sF(r,p^{t_0,m_0}_r)dr\geq 0.\]
\end{Proposition}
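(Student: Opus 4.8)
The plan is to derive the inequality from the comparison principle of Lemma~\ref{clas}, taking as terminal data at time $s$ a function that lies above $U(s,\cdot)$ everywhere but coincides with it, up to an arbitrarily small error, at the endpoint $p^{t_0,m_0}_s$. The existence of such a smooth ``penalization'' is exactly what Lemma~\ref{g} provides.

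More precisely, fix $(t_0,m_0)\in[0,T]\times\PR_2$ and $s\in(t_0,T]$ (for $s=t_0$ there is nothing to prove), and write $m_1:=p^{t_0,m_0}_s$. Given $\epsilon>0$, I would first use the continuity of $m\mapsto U(s,m)$ at $m_1$ to choose $\nu>0$ with $|U(s,m)-U(s,m_1)|\le\epsilon$ whenever $d_1(m,m_1)\le\nu$, and then choose $\delta>0$ so small that $\nu>((2\pi)^{d/2}+2\sqrt d)\sqrt\delta$. Let $\psi_\delta$ be the function of Lemma~\ref{g} attached to this $m_1$ and this $\delta$, and let $\alpha>0$ be the constant produced by item~3 of that lemma for the threshold $\nu$. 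Setting
\[ \psi(m):=U(s,m_1)+\epsilon+\frac{2\|U\|_\infty}{\alpha}\,\psi_\delta(m),\qquad m\in\PR_2,\]
one checks that $\psi$ satisfies $(A2)$ (item~2 of Lemma~\ref{g}, the extra contributions being a constant and a positive multiple), that $\psi\ge U(s,\cdot)$ on $\PR_2$ (for $d_1(m,m_1)\le\nu$ because $\psi(m)\ge U(s,m_1)+\epsilon\ge U(s,m)$, and for $d_1(m,m_1)\ge\nu$ because then $\psi_\delta(m)\ge\alpha$, so $\psi(m)\ge U(s,m_1)+\epsilon+2\|U\|_\infty\ge U(s,m)$), and that $\psi(m_1)=U(s,m_1)+\epsilon$ since $\psi_\delta(m_1)=0$ by item~1 of Lemma~\ref{g}.

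I would then apply Lemma~\ref{clas} with $t_1:=s$ and this $\psi$. Since $U$ is a bounded continuous subsolution of \eqref{eq1}, the subsolution inequality of \eqref{eq3} holds on $[0,s)\times\PR_2$ and, by construction, $U(s,\cdot)\le\psi$; Lemma~\ref{clas} therefore yields $U\le\phi$ on $[0,s]\times\PR_2$, where $\phi(t,m)=\psi(p^{t,m}_s)+\int_t^s F(r,p^{t,m}_r)\,dr$. Evaluating at $(t_0,m_0)$,
\[ U(t_0,m_0)\le\psi(p^{t_0,m_0}_s)+\int_{t_0}^s F(r,p^{t_0,m_0}_r)\,dr=U(s,p^{t_0,m_0}_s)+\epsilon+\int_{t_0}^s F(r,p^{t_0,m_0}_r)\,dr,\]
and letting $\epsilon\downarrow0$ gives $U(s,p^{t_0,m_0}_s)-U(t_0,m_0)+\int_{t_0}^s F(r,p^{t_0,m_0}_r)\,dr\ge0$, which is the assertion (the lower limit of the integral in the statement being $t_0$).

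The only genuine difficulty is the construction of $\psi$: one needs an \emph{admissible} terminal condition in the sense of $(A2)$ (hence $C^1$ in the measure, with the prescribed growth) that dominates the merely continuous function $U(s,\cdot)$ without separating from it at $m_1$. This is precisely where Lemma~\ref{g} is used, and the tension between the threshold $\nu$ dictated by the modulus of continuity of $U$ at $m_1$ and the constraint $\nu>((2\pi)^{d/2}+2\sqrt d)\sqrt\delta$ is what forces $\delta$ to be taken small; the rest is bookkeeping together with the passage to the limit in $\epsilon$.
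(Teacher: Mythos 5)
Your proof is correct and follows essentially the same route as the paper: a penalization $\psi$ built from Lemma \ref{g} that dominates $U(s,\cdot)$ while staying within $\epsilon$ of it at $m_1=p^{t_0,m_0}_s$, followed by the comparison Lemma \ref{clas} with terminal time $t_1=s$. Your choice of the explicit constant $U(s,m_1)+\epsilon$ and multiplier $2\|U\|_\infty/\alpha$ is a slight streamlining of the paper's construction (which instead extracts the shift $\tilde a$ from a maximizing sequence of $U(t_1,\cdot)-\tilde\psi$), but the argument is the same.
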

 
\begin{proof}
Let  $t_1\in[t_0,T]$, and $\epsilon>0$. We set $m_1:=p^{t_0,m_0}_{t_1}$.
Since $U$ is continuous, there exists $\eta>0$ such that, for all $m\in\PR_2$, 
\[ d_1(m,m_1)\leq\eta\Rightarrow |U(t_1,m)-U(t_1,m_1)|\leq\epsilon.\]
Let us choose $\delta>0$ such that $((2\pi)^{\frac{d}{2}}+2\sqrt d)\sqrt\delta<\eta$ and let $\psi_\delta$ be as in Lemma \ref{g}. Following Lemma \ref{g}, we can find $\alpha>0$ such that, for all $m\in\PR_2$, $d_1(m,m_1)\geq\eta\Rightarrow\psi_\delta(m)\geq\alpha$.\\
Set $K:=\sup_{m\in\PR_2}\left( U(t_1,m)-U(t_1,m_1)\right)+1$ and $\tilde\psi(m):=\frac{K}{\alpha}\psi_\delta(m), m\in\PR_2$, in order to get
\[ d_1(m,m_1)\geq\eta\Rightarrow \tilde\psi(m)\geq K.\]
This implies
\begin{equation}
\label{boule}
 \sup_{m\in\PR_2}\left( U(t_1,m)-\tilde\psi(m)\right)
=
\sup_{m\in\PR_2,d_1(m,m_1)\leq\eta}\left( U(t_1,m)-\tilde\psi(m)\right).
\end{equation}
Let $(m^n)_{n\geq 1}$ be a maximizing sequence for the right hand side of (\ref{boule}).
Since $\tilde\psi\geq 0$ and $U$ is bounded, the two sequences $(U(t_1,m^n))_n$ and $(\tilde\psi(m^n))_n$ are also bounded and we can find a subsequence (still denoted by $(m^n)$) such that $\tilde a:=\lim_{n}U(t_1,m^n)$ and $\tilde b:=\lim_n\tilde\psi(m^n)$ exist. Moreover these limits satisfy
\[ \tilde a-\tilde b=\sup_{m\in\PR_2}\left( U(t_1,m)-\tilde\psi(m)\right).\]
Finally, from the fact that, for all $n\geq 1$, $d_1(m^n,m_1)\leq\eta$, we deduce that
$|U(t_1,m^n)-U(t_1,m_1)|\leq\epsilon$
as well as
\begin{equation}
\label{epsilon}
 |\tilde a -U(t_1,m_1)|\leq\epsilon.
 \end{equation}
Now let $\psi(m):=\tilde a+\tilde\psi(m), m\in\PR_2$. Then
\[\begin{array}{rl}
\psi(m)-U(t_1,m)=&\tilde a+\tilde\psi(m)-U(t_1,m)\\
\geq & \tilde a+\tilde b-\tilde a=\tilde b\geq 0.
\end{array}
\]
It follows that Lemma \ref{clas} applies : for $\phi(t,m)=\psi(p^{t,m}_{t_1})+\int_t^{t_1}F(r,p^{t,m}_r)dr$, we get in particular
\[ U(t_0,m_0)\leq \phi(t_0,m_0)\]
or equivalently
\begin{equation}
\label{uleqphi}
 U(t_0,m_0)-\int_{t_0}^{t_1}F(s,p^{t_0,m_0}_s)ds\leq \psi(m_1).
 \end{equation}
Now remember that $\psi(m_1)=\tilde a+\tilde\psi(m_1)=\tilde a$.
Thus, combining (\ref{uleqphi}) with (\ref{epsilon}), we obtain
\[ U(t_0,m_0)-\int_{t_0}^{t_1} F(s,p^{t_0,m_0}_s)ds\leq U(t_1,m_1)+\epsilon.\]
This last $\epsilon$ being arbitrary, the result follows.
 \end{proof}

\pa
Now we come back to our original equation (\ref{eqH}) involving the Hamiltonian $H$. The purpose of what follows is to show that the result of Proposition \ref{increq} holds also for $F=H$
 even if $H$ doesn't satisfy the regularity assumptions (A1).\\
 
\pa The first Lemma is a result of type Stone-Weierstrass, which is strongly inspired by the lecture notes of P. Cardaliaguet \cite{CL} on the lecture of P.L. Lions \cite{CL}.

\begin{Lemma}
\label{stoneweierstrass}
Let $Q$ be a compact set in $\R^d$ and $\PR(Q)$ the set of probability measures on $Q$. 
A monomial on $[0,T]\times\PR(Q)$ is a map $P:[0,T]\times\PR(Q)\rightarrow\R$ of the form
\[ P(t,m)=t^k\Pi_{i=1}^n\int_Q\phi_i(x)dm(x),\]
with  $k,n\in\N$ and $\phi_1,\ldots,\phi_n\in C^\infty(Q)$.
We call polynomial any linear combination of monomials. The set $\Pi$ of polynomials is dense in $C^0([0,T]\times\PR(Q))$ endowed with the sup-norm.
\end{Lemma}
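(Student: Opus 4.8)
The plan is to recognize $\Pi$ as a point–separating subalgebra of continuous functions containing the constants, and then invoke the classical (real) Stone--Weierstrass theorem. First I would fix $X:=[0,T]\times\PR(Q)$ and check that $X$ is a compact metric space: since $Q$ is bounded, the weak convergence topology on $\PR(Q)$ coincides with the topology induced by $d_1$ (see point 5 of Lemma \ref{dro}), and $\PR(Q)\subset\BR_2(R)$ with $R=\sup_{x\in Q}|x|$, which is $d_1$-compact by point 3 of Lemma \ref{dro}; hence $X$ is compact as a product of compact metric spaces. Moreover every monomial belongs to $C^0(X)$: for each $\phi\in C^\infty(Q)$ the map $m\mapsto\int_Q\phi\,dm$ is continuous (point 4 of Lemma \ref{dro}, all continuous functions on $Q$ being bounded), and $(t,m)\mapsto t^k\prod_{i=1}^n\int_Q\phi_i\,dm$ is then continuous as a product of continuous functions; linear combinations stay continuous, so $\Pi\subset C^0(X)$.

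Next I would verify that $\Pi$ is a subalgebra of $C^0(X)$ containing the constants. Stability under linear combinations is built into the definition of polynomials. Stability under products follows because the product of two monomials $t^{k}\prod_{i=1}^{n}\int_Q\phi_i\,dm$ and $t^{\ell}\prod_{j=1}^{p}\int_Q\psi_j\,dm$ is again a monomial, namely $t^{k+\ell}$ times the product of the $n+p$ integrals $\int_Q\phi_i\,dm,\ \int_Q\psi_j\,dm$; distributing, the product of two polynomials is a polynomial. Finally the constant function $1$ is the monomial with $k=0$, $n=1$ and $\phi_1\equiv 1$ (equivalently, an empty product), so all constant functions lie in $\Pi$.

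Then I would check that $\Pi$ separates points of $X$. Let $(t,m)\neq(t',m')$. If $t\neq t'$, the monomial $(s,\mu)\mapsto s$ separates them. If $t=t'$ but $m\neq m'$, then, $m$ and $m'$ being distinct Borel probability measures on the compact set $Q$, there is $\phi\in C(Q)$ with $\int_Q\phi\,dm\neq\int_Q\phi\,dm'$; approximating $\phi$ uniformly on $Q$ by a polynomial $\tilde\phi$ (classical Weierstrass density of polynomials in $C(Q)$, or a mollification of a Tietze extension) with $\|\phi-\tilde\phi\|_\infty<\frac{1}{3}|\int_Q\phi\,dm-\int_Q\phi\,dm'|$ gives $\int_Q\tilde\phi\,dm\neq\int_Q\tilde\phi\,dm'$ with $\tilde\phi\in C^\infty(Q)$, so the monomial $(s,\mu)\mapsto\int_Q\tilde\phi\,d\mu$ separates $(t,m)$ and $(t',m')$. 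Since $X$ is compact and $\Pi$ is a point-separating subalgebra of $C^0(X)$ containing the constants, the Stone--Weierstrass theorem yields that $\Pi$ is dense in $(C^0(X),\|\cdot\|_\infty)$, which is exactly the assertion.

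I do not expect a real obstacle here: the only point that is not a purely formal verification is the separation of $\PR(Q)$ by smooth functions, and this reduces to the classical density of polynomials in $C(Q)$ for $Q\subset\R^d$ compact; everything else is a routine check of the hypotheses of the Stone--Weierstrass theorem.
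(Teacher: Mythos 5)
Your proof is correct and follows essentially the same route as the paper: verify that $\Pi$ is a point-separating subalgebra of $C^0([0,T]\times\PR(Q))$ containing the constants and apply the Stone--Weierstrass theorem. You merely spell out two points the paper leaves implicit (compactness of $[0,T]\times\PR(Q)$ and the existence of a \emph{smooth} function separating two distinct measures, via Weierstrass approximation of a continuous separator), both of which are fine.
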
 
 
 \begin{proof}
It is easy to see that $\Pi$ is a sub-algebra in $C^0([0,T]\times\PR(Q))$ and that $\Pi$ contains a non-zero constant: $P(t,m)\equiv 1$. $\Pi$ also separates points: indeed let $(t_1,m_1)\neq (t_2,m_2)$.  It is obvious that, if $t_1\neq t_2$, $P(t,m)=t$ gives $P(t_1,m_1)\neq P(t_2,m_2)$.
 Now, if $t_1=t_2$ but $m_1\neq m_2$, we can find a smooth map $\varphi$ such that 
$\int_Q\varphi dm_1\neq\int_Q\varphi dm_2$. This means that the monomial $P(t,m)=\int_Q\varphi dm$ separates the points $(t_1,m_1)$ and $(t_2,m_2)$.
Hence the Stone-Weierstrass-theorem applies and the result follows.
\end{proof}

\begin{Lemma}
\label{tildeH}
For any $K>0$ and $\epsilon>0$, there exists $\tilde H:[0,T]\times\PR_2\rightarrow\R$ satisfying (A1) such that, for all $t\in[0,T]$ and all $m\in\PR_2$ with $|m|_2^2\leq K$,
\[ |H(t,m)-\tilde H(t,m)|\leq \epsilon.\]
\end{Lemma}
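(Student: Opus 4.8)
The plan is to regularize $H$ in two stages. The obstruction to applying the Stone--Weierstrass approximation of Lemma \ref{stoneweierstrass} directly is twofold: a general $m\in\PR_2$ with $|m|_2^2\le K$ need not be compactly supported, and even after reducing to a compact base the approximants have to be smooth \emph{in the measure variable}, which is exactly what $(A1)$ demands. I would therefore first push $m$ forward onto a fixed ball by a genuinely smooth map, and then approximate the resulting continuous function on that ball by polynomials in the measure, whose flat derivatives in $m$ are manifestly smooth in the space variable.

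Concretely, I would fix a smooth retraction $\theta_R:\R^d\to\overline{B(0,2R)}$ equal to the identity on $\overline{B(0,R)}$ and with bounded derivatives of every order — for instance $\theta_R(x)=R\chi(x/R)$ for a fixed smooth $\chi$ equal to the identity on the unit ball, of norm $\le 2$, with bounded derivatives. Since $\theta_R=\mathrm{id}$ on $B(0,R)$ and $|\theta_R(x)|\le 2R\le 2|x|$ when $|x|>R$, the coupling $(\mathrm{id},\theta_R)_\# m$ gives, for every $m$ with $|m|_2^2\le K$,
\[ d_1\big(m,(\theta_R)_\# m\big)\le\int_{\R^d}|x-\theta_R(x)|\,dm(x)\le 3\int_{\{|x|>R\}}|x|\,dm(x)\le\frac{3}{R}\int_{\R^d}|x|^2\,dm(x)\le\frac{3K}{R}. \]
Choosing $R$ so that $3CK/R\le\epsilon/2$ and using that $H$ is $C$-Lipschitz in $m$ for $d_1$ (Lemma \ref{HLipsch}), this yields $|H(t,m)-H(t,(\theta_R)_\# m)|\le\epsilon/2$ uniformly over $t\in[0,T]$ and over $\{|m|_2^2\le K\}$.

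Next, with $Q:=\overline{B(0,2R)}$ compact, $H$ restricted to $[0,T]\times\PR(Q)$ is continuous by Lemma \ref{HLipsch}, so Lemma \ref{stoneweierstrass} produces a polynomial $P(t,\mu)=\sum_j c_j\,t^{k_j}\prod_{i=1}^{n_j}\int_Q\phi_{j,i}\,d\mu$ with $\phi_{j,i}\in C^\infty(Q)$ and $\sup_{[0,T]\times\PR(Q)}|P-H|\le\epsilon/2$; multiplying the $\phi_{j,i}$ by a fixed cutoff equal to $1$ on $Q$ (which does not change integrals against measures supported in $Q$) I may take $\phi_{j,i}\in C^\infty_c(\R^d)$. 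I then set
\[ \tilde H(t,m):=P\big(t,(\theta_R)_\# m\big)=\sum_j c_j\,t^{k_j}\prod_{i=1}^{n_j}\int_{\R^d}\big(\phi_{j,i}\circ\theta_R\big)(x)\,dm(x),\qquad (t,m)\in[0,T]\times\PR_2. \]
Writing $\psi_{j,i}:=\phi_{j,i}\circ\theta_R$, which is bounded, $C^\infty$ and has bounded derivatives of every order, $\tilde H$ is bounded and jointly continuous (each $m\mapsto\int\psi_{j,i}\,dm$ being $d_1$-continuous), and a routine differentiation shows that $\frac{\delta\tilde H}{\delta m}(t,m,x)$ is, up to the normalizing constant in $x$, a finite sum of terms $c_j\,t^{k_j}\psi_{j,i}(x)\prod_{i'\ne i}\int\psi_{j,i'}\,dm$, so that $\frac{\delta\tilde H}{\delta m}$, $D_x\frac{\delta\tilde H}{\delta m}$ and $D^2_x\frac{\delta\tilde H}{\delta m}$ exist and are bounded and continuous; hence $\tilde H$ satisfies $(A1)$. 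Finally, for $t\in[0,T]$ and $|m|_2^2\le K$, since $(\theta_R)_\# m\in\PR(Q)$,
\[ |\tilde H(t,m)-H(t,m)|\le\big|P(t,(\theta_R)_\# m)-H(t,(\theta_R)_\# m)\big|+\big|H(t,(\theta_R)_\# m)-H(t,m)\big|\le\frac{\epsilon}{2}+\frac{\epsilon}{2}=\epsilon. \]

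The step I expect to be the main obstacle is the verification that the approximant is smooth in the measure variable in the precise sense of $(A1)$: this is why one cannot work with the (merely Lipschitz) metric projection onto a ball but must introduce the smooth retraction $\theta_R$, and why one leans on the explicit polynomial structure supplied by Lemma \ref{stoneweierstrass}, whose flat derivatives in $m$ are linear combinations of the $\psi_{j,i}$ and therefore inherit their smoothness and boundedness in $x$. Everything else — the Wasserstein distance estimate and the two-term splitting of $|\tilde H-H|$ — is elementary.
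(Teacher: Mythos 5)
Your proof is correct and follows essentially the same route as the paper: push the measure forward onto a compact ball by a smooth truncation/retraction map, apply the Stone--Weierstrass approximation of Lemma \ref{stoneweierstrass} on the resulting compact set of measures, and verify $(A1)$ from the explicit polynomial structure, concluding with the same two-term triangle inequality. The only cosmetic difference is that the paper's smooth map $\varphi_R$ sends the complement of $\BR(2R)$ to $0$ rather than retracting onto the ball, which changes nothing of substance.
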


\begin{proof} Take $R>\frac{4CK}\epsilon$ and define $K_r=\{ m \in \PR_2 \,|\,\mbox{Supp}(m)\subset\BR(2R)\}$ with $\mbox{Supp}(m)$ the support of $m$ and  $\BR(2R):=\{ x\in\R^d,|x|\leq 2R\}$. $K_r$ is weakly compact, and thus compact for $d_1$. It follows therefore from Lemma \ref{stoneweierstrass} that there exists a polynomial map $H_{R,\epsilon}: K_r \rightarrow\R $, such that, for all $t\in[0,T]$ and all $m\in K_r$,
\begin{equation}
\label{HR}
 | H_{R,\epsilon}(t,m)-H(t,m)|\leq\frac{\epsilon}{2}.
 \end{equation}
Now let $\varphi_R\in C^\infty(\R^d,\R^d)$, with
\[\varphi_R(x)=\left\{
\begin{array}{ll}
x,&\mbox{ if }x\in\BR(R),\\
0& \mbox{ if } x\in\BR(2R)^c
\end{array}
\right.\]
and $|\varphi_R(x)|\leq|x|$, for all $x\in\R^d$. Remark that, for all $m\in\PR_2$, $\varphi_R\sharp m$ has its support in $\BR(2R)$, where $\varphi_R\sharp m$ denotes the pushforward of $m$ by $\varphi_R$ defined by $\varphi_R\sharp m(A)=m(\varphi_R^{-1}(A))$ for all Borel subsets $A$ of $\R^d$.\\
Set $\tilde H_{R,\epsilon}(t,m)=H_{R,\epsilon}(t,\varphi_R\sharp m)$.
It is easy to see  that 
\[\frac{\delta \tilde H_{R,\epsilon}}{\delta m} (t,m,x)= \frac{\delta H_{R,\epsilon}}{\delta m}(t,\varphi_R\sharp m, \varphi_R(x)),\]
and therefore that $\tilde H_{R,\epsilon}$ satisfies (A1).
Since $H$ is Lipschitz in $m$, we have, for all $(t,m)\in[0,T]\times\PR_2$ such that $|m|_2^2 \leq K$,
\[\begin{array}{rl}
|H(t,m)-\tilde H_{R,\epsilon}(t,m)|\leq &|H(t,m)-H(t,\varphi_R\sharp m)|+|H(t,\varphi_R\sharp m)-\tilde H_{R,\epsilon}(t,m)| \\
\leq & Cd_1(m,\varphi_R\sharp m)+\frac{\epsilon}2\\
\leq & C\int_{\R^d}|x-\varphi_R(x)|dm(x)+\frac{\epsilon}2\\
\leq & 2C\int_{\BR(R)^c}|x|dm(x)+\frac{\epsilon}2\\
\leq & \frac{2C}{R}\int_{\R^d}|x|^2dm(x)+\frac{\epsilon}2\leq \epsilon.
\end{array}
\]
The result follows.
\end{proof}

\pa We are ready now to claim that Proposition \ref{increq} holds true for $F=H$:
\begin{Proposition}
\label{incr}
Let $U$ be a bounded continuous subsolution of (\ref{eqH}). Then, for all $(t_0,m_0)\in[0,T]\times\PR_2$,
\[ U(s,p^{t_0,m_0}_s)-U(t_0,m_0)+\int_t^sH(r,p^{t_0,m_0}_r)dr\geq 0.\]
\end{Proposition}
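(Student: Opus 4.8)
The plan is to deduce Proposition \ref{incr} from Proposition \ref{increq} by a perturbation argument, using Lemma \ref{tildeH} to approximate $H$ by functions $\tilde H$ satisfying $(A1)$. The key observation is that a bounded continuous subsolution $U$ of \eqref{eqH} is \emph{almost} a subsolution of \eqref{eq1} with $F=\tilde H$: for any test function $\varphi$ satisfying $(A3)$ such that $\varphi-U$ has a local minimum at $(t,m)$, the subsolution property gives $\partial_t\varphi(t,m)+\frac12\int_{\R^d}\di[D_m\varphi](t,m,x)m(dx)+H(t,m)\geq 0$, hence $\partial_t\varphi+\frac12\int\di[D_m\varphi]\,dm+\tilde H(t,m)\geq H(t,m)-\tilde H(t,m)$, which is $\geq -\epsilon$ whenever $|m|_2^2\leq K$. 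So $U$ is a subsolution of the perturbed equation off the region where $|m|_2^2>K$, with an error $\epsilon$.

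The cleanest way to handle both the region restriction and the error is to mimic the proof of Lemma \ref{clas}: add the penalization $\epsilon|m|_2^2$ (to confine a maximizing sequence to a $d_1$-compact ball $\{|m|_2^2\le K\}$ by Lemma \ref{dro}, point 3), and add a linear-in-time term $\gamma(t_1-t)$ to absorb the $\epsilon$-error in the Hamiltonian. Concretely, fix $(t_0,m_0)$ and $t_1\in[t_0,T]$, and suppose toward a contradiction that $U(t_0,m_0)-\int_{t_0}^{t_1}H(r,p^{t_0,m_0}_r)dr > U(t_1,m_1)+\mu$ for some $\mu>0$, where $m_1=p^{t_0,m_0}_{t_1}$. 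I would choose $\epsilon$ small and $\gamma$ small (both depending on $\mu$), then $K$ large enough that the relevant measures stay in $\{|m|_2^2\le K\}$ (using that $\E[M_s(\psi)]$ along the heat flow is controlled, as in the proof of Lemma \ref{tension}), and pick $\tilde H$ from Lemma \ref{tildeH} with $|H-\tilde H|\le\epsilon$ on that ball. Then I would build the test function exactly as in Lemma \ref{clas} — using Lemma \ref{g} to produce a terminal penalization $\tilde\psi$ forcing the relevant supremum onto a small $d_1$-ball around $m_1$, set $\varphi(t,m)=\phi(t,m)+\gamma(t_1-t)+\epsilon|m|_2^2$ with $\phi(t,m)=\psi(p^{t,m}_{t_1})+\int_t^{t_1}\tilde H(r,p^{t,m}_r)dr$ — and run the same maximum-principle computation. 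At the maximizing point $(\bar t,\bar m)$ the subsolution inequality for $U$ against $\varphi$ yields, after computing $\partial_t\varphi$ and $\di D_m\varphi$ and using that $\phi$ solves \eqref{eq3} with $F=\tilde H$ (Lemma \ref{techlemma}), the relation $-\gamma-d\epsilon + (H(\bar t,\bar m)-\tilde H(\bar t,\bar m))\geq 0$, i.e. $-\gamma-d\epsilon+\epsilon\geq 0$, which is impossible once $\gamma$ and $\epsilon$ are chosen with $\gamma>\epsilon$ — contradiction.

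Alternatively — and this is probably the more economical write-up — one avoids redoing the maximum principle: since $\tilde H$ satisfies $(A1)$, Lemma \ref{clas} and hence Proposition \ref{increq} apply verbatim to $\tilde H$ provided $U$ is a subsolution of \eqref{eq1} with $F=\tilde H$. The only gap is that $U$ is a genuine subsolution of the $\tilde H$-equation only where $H-\tilde H\geq 0$ and only on the ball; but replacing $\tilde H$ by $\tilde H-\epsilon$ (still $(A1)$) makes $U$ a genuine subsolution of \eqref{eq1} with $F=\tilde H-\epsilon$ on $\{|m|_2^2\le K\}$, and one then confines the whole argument to that ball via the $\epsilon|m|_2^2$ penalization already built into Lemma \ref{clas}. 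Applying Proposition \ref{increq} to $F=\tilde H-\epsilon$ gives $U(s,p^{t_0,m_0}_s)-U(t_0,m_0)+\int_{t_0}^s(\tilde H(r,p^{t_0,m_0}_r)-\epsilon)\,dr\geq 0$; then $|\tilde H-H|\le\epsilon$ on the (compact, hence bounded-second-moment) set of measures $\{p^{t_0,m_0}_r: r\in[t_0,s]\}$ upgrades this to $U(s,p^{t_0,m_0}_s)-U(t_0,m_0)+\int_{t_0}^sH(r,p^{t_0,m_0}_r)\,dr\geq -2\epsilon(s-t_0)$, and letting $\epsilon\to 0$ finishes.

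The main obstacle is bookkeeping rather than conceptual: one must ensure that \emph{all} measures that enter the argument — the curve $(p^{t_0,m_0}_r)_r$, and, more delicately, the measures appearing in the maximizing sequence of Lemma \ref{clas}'s proof when applied with $F=\tilde H-\epsilon$ — lie inside the ball $\{|m|_2^2\le K\}$ on which $|H-\tilde H|\le\epsilon$; this is exactly why the $\epsilon|m|_2^2$ penalization in Lemma \ref{clas} is indispensable here and why one cannot simply quote Proposition \ref{increq} as a black box without checking that the subsolution property of $U$ is only needed on that ball. Care is also needed because $H$ is merely Lipschitz and bounded (Lemma \ref{HLipsch}), so it does not satisfy $(A1)$ and the construction genuinely requires the smoothing of Lemma \ref{tildeH}; the linear growth of $\psi_\delta$ and $\tilde\psi$ (Lemma \ref{g}, item 2) must be tracked so that $\phi$ retains the almost-linear growth used to confine the maximizing sequence.
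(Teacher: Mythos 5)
Your second, ``more economical'' variant is essentially verbatim the paper's own proof: the paper sets $\tilde U(s,m):=U(s,m)+\epsilon s$ (equivalent to your replacement of $\tilde H$ by $\tilde H-\epsilon$), applies Proposition \ref{increq} with $F=\tilde H$, obtains the inequality up to an error $-2\epsilon(s-t_0)$, and lets $\epsilon\to 0$. Your additional concern about confining the maximizing sequence of Lemma \ref{clas} to the ball on which $|H-\tilde H|\le\epsilon$ is legitimate---the paper glosses over this point when asserting that $\tilde U$ is a subsolution for $F=\tilde H$---so your more careful first variant only strengthens the same argument.
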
 
 
 \begin{proof} Fix $(t_0,m_0)\in[0,T]\times\PR_2$. It holds that, for any $r\in[t_0,T]$, 
 \[| p^{t_0,m_0}_r|_2^2\leq K:=|m_0|^2_2+dT.\]
 Using Lemma \ref{tildeH}, we can find, for any arbitrarily small $\epsilon>0$, a map $\tilde H$ which satisfies (A1) and such that  $|\tilde H(r,p^{t_0,m_0}_r)-H(r,p^{t_0,m_0}_r)|\leq \epsilon$ for all $r\in[t_0,T]$. \\
 If $U$ is a subsolution of (\ref{eqH}), then $\tilde U(s,m):=U(s,m)+\epsilon s$ is a subsolution of (\ref{eq1}) for $F=\tilde H$. From Lemma \ref{increq}, it follows that
 \[\tilde U(s,p^{t_0,m_0}_s)-\tilde U(t_0,m_0)+\int_{t_0}^s\tilde H(r,p^{t_0,m_0}_r)dr\geq 0,\]
 and therefore
 \[U(s,p^{t_0,m_0}_s)-U(t_0,m_0)+\int_{t_0}^sH(r,p^{t_0,m_0}_r)dr\geq -2\epsilon(s-t_0).\]
Since this relation holds true for all $\epsilon>0$, the result follows.
 \end{proof}
 
\pa The above result permits us finally to conclude by the main theorem, which characterizes the value function $V$ as the largest bounded, continuous subsolution, which  is convex in $m$ and vanishes at time $T$.
  \begin{Theorem}
Let $U$ be a bounded, continuous subsolution of (\ref{eqH}) which is convex in $m$ and satisfies $U(T,m)=0$ for all $m\in\PR_2$. Then $U\leq V$. 
 \end{Theorem}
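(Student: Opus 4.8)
The plan is to use jointly the two properties of $U$ that so far have only been used separately: as a subsolution of \eqref{eqH}, $U$ satisfies the transport inequality of Proposition \ref{incr} along the heat flow $s\mapsto p^{t_0,m_0}_s$, while $U(s,\cdot)$ is convex on $\PR_2$. Since $V(t,m)=\inf_{M\in\MR(t,m)}\E[\int_t^TH(s,M_s)\,ds]$, it suffices to prove that $U(t,m)\le\E[\int_t^TH(s,M_s)\,ds]$ for an arbitrary $M\in\MR(t,m)$ and then take the infimum.

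To do this I would fix a partition $\pi=\{t=t_1<\dots<t_N=T\}$ and telescope along the grid. For each $q$, Proposition \ref{incr} applied pathwise with the random initial condition $(t_q,M_{t_q})$ — legitimate because that inequality holds for every deterministic $(t_0,m_0)\in[0,T]\times\PR_2$, $M_{t_q}\in\PR_2$ almost surely by Remark \ref{remjen}, and $U$, $(r,m')\mapsto p^{t_q,m'}_r$ and $H$ are continuous — gives
\[ U(t_q,M_{t_q})\le U\big(t_{q+1},p^{t_q,M_{t_q}}_{t_{q+1}}\big)+\int_{t_q}^{t_{q+1}}H\big(r,p^{t_q,M_{t_q}}_r\big)\,dr\quad\text{a.s.}\]
Now, by property $(ii)$ of $\MR(t,m)$, extended to functions of quadratic growth (Remark \ref{remjen}), $p^{t_q,M_{t_q}}_{t_{q+1}}$ is exactly the conditional expectation of $M_{t_{q+1}}$ given $\FR^M_{t_q}$ in the sense of measures, so the convexity of $U(t_{q+1},\cdot)$ yields the conditional Jensen inequality
\[ U\big(t_{q+1},p^{t_q,M_{t_q}}_{t_{q+1}}\big)\le\E\big[\,U(t_{q+1},M_{t_{q+1}})\,\big|\,\FR^M_{t_q}\,\big]\quad\text{a.s.}\]
Substituting, taking expectations, summing over $q=1,\dots,N-1$, the $U$-terms telescope; using the terminal condition $U(T,\cdot)=0$ and then one more Jensen inequality $U(t,m)=U(t,\E[M_t])\le\E[U(t,M_t)]$ (property $(i)$), I obtain
\[ U(t,m)\le\E\Big[\sum_{q=1}^{N-1}\int_{t_q}^{t_{q+1}}H\big(r,p^{t_q,M_{t_q}}_r\big)\,dr\Big].\]

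It then remains to let $|\pi|\to0$ and identify the right-hand side with $\E[\int_t^TH(s,M_s)\,ds]$. Since $H$ is $C$-Lipschitz (Lemma \ref{HLipsch}) and $d_1\big(p^{t_q,M_{t_q}}_r,M_{t_q}\big)=d_1\big(M_{t_q}*\rho_{r-t_q},M_{t_q}\big)\le\sqrt{d|\pi|}$ for $r\in[t_q,t_{q+1}]$ (a direct coupling, as in the proof of Lemma \ref{dp}), the right-hand side above differs from $\E[\sum_q(t_{q+1}-t_q)H(t_q,M_{t_q})]$ by at most $C(T-t)\big(|\pi|+\sqrt{d|\pi|}\big)$, and this last quantity converges to $\E[\int_t^TH(s,M_s)\,ds]$ by Lemma \ref{Hriemann} applied to the constant (hence trivially $\LR(\kappa_1)$-convergent) sequence $M^n\equiv M$ along any sequence of partitions with mesh tending to $0$. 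Passing to the limit gives $U(t,m)\le\E[\int_t^TH(s,M_s)\,ds]$, and the infimum over $M\in\MR(t,m)$ concludes.

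The one step I expect to require genuine care is the Jensen inequality in the space of probability measures: one must check that the regular conditional distribution $\nu$ of $M_{t_{q+1}}$ given $\FR^M_{t_q}$ has $p^{t_q,M_{t_q}}_{t_{q+1}}$ as its barycenter in $\PR_2$, and that the convex, bounded, $d_1$-continuous function $U(t_{q+1},\cdot)$ satisfies $U(t_{q+1},\mathrm{bar}(\nu))\le\int U(t_{q+1},\cdot)\,d\nu$. The natural route is to approximate $\nu$ by finitely supported probability measures — for which the inequality is merely iterated convexity — and to pass to the limit using the boundedness and $d_1$-continuity of $U$ together with the uniform integrability of second moments guaranteed by $\E[M_{t_{q+1}}(|\cdot|^2)]=p^{t,m}_{t_{q+1}}(|\cdot|^2)<\infty$ (Remark \ref{remjen}), which is precisely what keeps the barycenters of the approximants $d_1$-convergent. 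The same remark covers the unconditional Jensen step used with property $(i)$.
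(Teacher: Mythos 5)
Your proposal is correct and follows essentially the same route as the paper's own proof: telescoping $U$ along a partition, applying Proposition \ref{incr} pathwise at the random initial conditions $(t_q,M_{t_q})$, and using convexity together with property $(ii)$ of $\MR(t,m)$ as a conditional Jensen inequality (a step the paper asserts without the approximation argument you rightly flag as the delicate point, and you are in fact slightly more careful in making explicit the initial Jensen step $U(t,m)\leq \E[U(t,M_t)]$ from property $(i)$). The only difference is cosmetic: for the final passage to the limit you reuse Lemma \ref{Hriemann} with the constant sequence $M^n\equiv M$, whereas the paper replaces $H(r,p^{t_q,M_{t_q}}_r)$ by $H(r,M_{t_q})$ and invokes Skorokhod convergence of $s\mapsto H(s,\tilde M_s)$; both are valid.
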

 
 \begin{proof}
 Let $(t_0,m_0)\in[0,T]\times\PR_2$ and $M\in\MR(t_0,m_0)$.
% For an arbitrary $\epsilon>0$, we fix $N\in\N$ such that $\frac{4C}3\sqrt{\frac{(T-t_0)^3}N}\leq\epsilon$, where $C$ is the Lipschitz constant of $f$.\\
 Consider the regular time grid $t_0<t_1<\ldots<t_N=T$, with $t_k=t_0+k\frac{(T-t_0)}N$.\\
 By Proposition \ref{incr}, we have
 \begin{equation}
 \label{utp}
 U(t_{k+1},p^{t_k,M_{t_k}}_{t_{k+1}})-U(t_k,M_{t_k})+\int_{t_k}^{t_{k+1}}H(r,p^{t_k,M_{t_k}}_r)dr\geq 0.
 \end{equation}
From the definition of $\MR(t_0,m_0)$ and since $U$ is convex in $m$, it follows that
\[ U(t_{k+1},p^{t_k,M_{t_k}}_{t_{k+1}})=U\left(t_{k+1},\E[M_{t_{k+1}}|\FR^M_{t_k}]\right)
\leq \E[ U(t_{k+1},M_{t_{k+1}})|\FR^M_{t_k}].\]
And therefore
\begin{equation}
\label{utk}
\E\left[ U(t_k,M_{t_k})-U (t_{k+1},M_{t_{k+1}})|\FR^M_{t_k}\right]\leq\E\left[\int_{t_k}^{t_{k+1}}H(r,p^{t_k,M_{t_k}}_r)dr|\FR^M_{t_k}\right].
\end{equation} 
Since, by Lemma \ref{HLipsch}, $H$ is Lipschitz in $m$, with Lipschitz constant $C$, we have, for any $r\in [t_k,t_{k+1}]$,
\begin{equation}
\label{h}
 \E\left[ |H(r,p^{t_k,M_{t_k}}_r)-H(r,M_{t_k})|\big|\FR^M_{t_k}\right]\leq C\E\left[d_1(p^{t_k,M_{t_k}}_r,M_{t_k})\big|\FR^M_{t_k}\right].
 \end{equation}
By Lemma \ref{dp}, we have 
\[d_1(p^{t_k,M_{t_k}}_r,M_{t_k}) \leq \sqrt{d(r-t_k)}.\]
With (\ref{h}), this gives us for $H$:
\[\begin{array}{rl}
E\left[\int_{t_k}^{t_{k+1}}H(r,p^{t_k,M_{t_{k}}}_r)dr\big|\FR^M_{t_k}\right] \leq &E\left[\int_{t_k}^{t_{k+1}}H(r,M_{t_k})dr\big|\FR^M_{t_k}\right]+\frac{2C\sqrt d}3(t_{k+1}-t_k)^{3/2}
\end{array}\]

\noi Coming back to (\ref{utk}), thanks to the choice of the path of the time grid, we can write
\[
\E\left[U(t_k,M_{t_k}) -U(t_{k+1},M_{t_k+1})\right]\leq \E\left[\int_{t_k}^{t_{k+1}} H(r,M_{t_k})dr\right]+\frac{2C\sqrt d}3(t_{k+1}-t_k)^{3/2}.
\]
Summing up over all $k$, we get
\[ \E\left[ U(t_0,m_0)\right]=\E\left[ U(t_0,m_0)- U(T,M_T)\right]\leq \E\left[\int_{t_0}^TH(r,\tilde{M}_r)dr\right]+ \frac{2C\sqrt d(T-t_0)^{3/2}}{3 \sqrt{N}},
\]
with $\tilde M_r$ equal to $M_{t_k}$ if  $r\in[t_k,t_{k+1})$, for all $k$.
As the mesh of the grid goes to zero, the process $s \rightarrow H(s,\tilde{M}_s)$ converges almost surely for the Skorokhod topology  to $s \rightarrow H(s,M_s)$ (see e.g. Proposition VI.6.37 in \cite{JacodShiryaev}), so that:
\[ U(t_0,m_0)\leq \E\left[\int_{t_0}^TH(r,M_r)dr\right].\]
And, finally, since $M\in\MR(t_0,m_0)$ has been chosen arbitrarily, the result follows.
 \end{proof}
 
 \pa {\bf Acknowledgements:} We thank Pierre Cardaliaguet for very fruitful discussions.\\
 We thank the referees for their careful reading and their pertinent remarks.\\
 
 \pa This work has been partially supported by the French National Research Agency ANR-16-CE40-0015-01 MFG.
 %%%%%%%%%%%%%%%%%%%%%%%%%%%%%%%%%%%%%%%%%%%%%%%%%%%%%%%


\begin{thebibliography}{}

\end{thebibliography}


\begin{thebibliography}{abc99xyz}

%\bibitem{Ambrosio} Ambrosio L., Gigli N., Savar\'e G., {\sc Gradient Flows in Metrix Spaces and in the Space of Probability Measures}, Lecture in Mathematics, ETH Z\"urich, Birkh\"auser, 2005.

\bibitem{AumannMaschler} Aumann R. J., Maschler M. B., {\sc Repeated games with incomplete information}, With the collaboration of Richard E. Stearns., (1995), MIT Press, Cambridge, MA. 

%\bibitem{bcd} Bardi M., Capuzzo-Dolcetta I. {\sc Optimal Control and viscosity solutions of Hamilton Jacobi Bellman equations}, Birkh\"{a}user, Boston, 1997.
\bibitem{BS} Bertsekas D. P., Shreve S. E., {\sc Stochastic optimal control: The discrete time case}, (1978), New York: Academic Press.


\bibitem{blackwelldubins} Blackwell D., Dubins  L. E.,{\it An extension of Skorohod's almost sure representation theorem},
Proceedings of the American Mathematical Society (1983), 89, pp. 691--692.

%\bibitem{Brezis}  Brezis H., {\sc Functional analysis, Sobolev spaces and partial differential equations} (2011),  Springer, New York

\bibitem{CL} Cardaliaguet P., {\it Notes on Mean Field Games}, unpublished

\bibitem{c1} Cardaliaguet P.,
{\it Differential games with asymmetric information.} SIAM J. Control Optim. (2006), 46(3), pp. 816--838.

%\bibitem{c2} Cardaliaguet P. {\it A double obstacle problem arising in differential game theory}, J. Math. Anal. Appl. 360 (2009), 95–107

\bibitem{CDLL} Cardaliaguet P., Delarue F., Lasry J.F., Lions P.L., {\it The master equation and the convergence problem in mean field games},(2015),  arXiv:1509.02505 

\bibitem{CLS} Cardaliaguet P., Laraki R., Sorin S., {\it A Continuous Time Approach for the Asymptotic Value in Two-Person Zero-Sum Repeated Games }, SIAM Cont. Optimization, (2012), 50(3), pp. 1573--1596. 

\bibitem{cr1} Cardaliaguet P., Rainer C., {\it Stochastic differential games with asymmetric information}, Appl. Math. Optim., (2009), 59(1), pp. 1--36

\bibitem{cr2} Cardaliaguet P., Rainer C., {\it  On a continuous-time game with incomplete information}, Math. Oper. Res., (2009), 34(4), pp. 769--794

\bibitem{cr3} Cardaliaguet P., Rainer C., {\it Games with incomplete information in continuous time and for continuous types}, Dyn. Games Appl., (2012), 2, pp. 206-–227.

\bibitem{CRRV} Cardaliaguet P. Rainer C., Rosenberg D., Vieille N., {\it  Markov games with frequent actions and incomplete information},  Math. Oper. Res., (2016), 41(1), pp. 49--71.

\bibitem{CIL} Crandall, M. G., Ishii, H., Lions, P.L., {\it User's guide to viscosity solutions of second order partial differential equations}, American Mathematical Society. Bulletin. New Series, (1992), 27 (1), pp. 1--67.

%\bibitem{demeyerrozenberg} De Meyer B., Rosenberg D. {\it ``Cav $u$'' and the dual game}, Math. Oper. Res. 24 (1999), no. 3, 619-626.

%\bibitem{DeM}  De Meyer B. {\it Price dynamics on a stock market with asymmetric information.} Games Econom. Behav., 69(1):42-71, 2010. 


\bibitem{dudley} Dudley R.M., {\sc Real analysis and probability}, (2002) Cambridge University Press.

%\bibitem{EthierKurtz}
%S.N. Ethier  and T.G. Kurtz, \emph{Markov processes: characterization and Convergence}, John Wiley \& Sons Inc., 1986.

%\bibitem{FlemingSoner}

%\bibitem{FlemingSouganidis} Fleming, Souganidis

%\bibitem{Gen} Gensbittel F. {\it Covariance control problems of martingales arising from game theory.} Preprint. 

\bibitem{Gensbittel} Gensbittel F.,{\it Continuous-time limits of dynamic games with incomplete information and a more informed player}, 
Int. J. of Game Theory.,(2016) , 45(1), pp. 321--352.

\bibitem{GR} Gensbittel F., Rainer C., {\it A probabilistic representation for the value of zero-sum differential games with incomplete information on both sides},(2016), arXiv:1602.06140. 

\bibitem{Gruen} Gruen C., {\it A BSDE approach to stochastic differential games with incomplete information.} Stochastic Processes and their Applications, (2012), 122(4), pp. 1917--1946.

\bibitem{JacodShiryaev}
J. Jacod  and A.N. Shiryaev, \emph{Limit theorems for stochastic processes, Second Edition}, (2003), Springer-Verlag.

\bibitem{Kurtz} Kurtz, {\it Random Time Changes and Convergence in Distribution Under the Meyer-
Zheng Conditions}, The Annals of Probability (1991), 19, pp. 1010--1034.

\bibitem{Lions} Lions, P.-L. {\it Cours au Coll\`ege de France}, www.college-de-france.fr.


%\bibitem{mertenszamir} Mertens J.F., Zamir S. {\it The value of two person zero sum repeated games with lack of information on both sides}, Int. Games Theory (1994), 1, 39-64

\bibitem{MeyerZheng} Meyer P.-A., Zheng W. A., {\it Tightness criteria for laws of semimartingales}, Ann. Inst. H. Poincaré Probab. Statist., (1984), 20(4), pp. 353--372.

\bibitem{Neyman} Neyman A., {\it Stochastic Games with Short-Stage Duration}, Dynamic Games and Applications,
(2013), 3, pp. 236–-278.

\bibitem{OliuBarton} Oliu Barton M., {\it Differential games with asymmetric and correlated information},
Dynamic Games and Applications, (2015), 5 (3), pp. 378-396.

%\bibitem{SB} Sainte-Beuve, M.-F., {\it On the extension of von Neumann-Aumann's theorem}, Journal of Functional Analysis (1974), 17(1), 112-129.

\bibitem{Sorinfirstcourse} Sorin S., {\sc A first course on zero-sum repeated games}, (2002) Springer.

\bibitem{Sorin} Sorin S. {\it Limit value of dynamic zero-sum games with vanishing stage duration}, (2016) arXiv:1603.09089


\bibitem{Villani} Villani C. {\sc  Topics in optimal transportation}, Graduate Studies in Mathematics, 58. American Mathematical Society,(2003)

\end{thebibliography}
\end{document}